\tikzset{%
	symbol/.style={%
		draw=none,
		every to/.append style={%
			edge node={node [sloped, allow upside down, auto=false]{$#1$}}}
	}
}
\newtheorem{Theorem}{Theorem}
\newtheorem{proposition}[Theorem]{Proposition}
\newtheorem{lemma}[Theorem]{Lemma}
\newtheorem{corollary}[Theorem]{Corollary}
\theoremstyle{definition}
\newtheorem{example}[Theorem]{Example}
\newtheorem{remark}[Theorem]{Remark}
\newtheorem{definition}[Theorem]{Definition}
\DeclareMathOperator{\hfrak}{\mathfrak{h}}
\DeclareMathOperator{\Shv}{\mathbf{Shv}}
\DeclareMathOperator{\id}{id}
\DeclareMathOperator{\ob}{Ob}
\DeclareMathOperator{\Set}{\mathbf{Set}}
\DeclareMathOperator{\Topos}{\mathbf{Topos}}
\DeclareMathOperator{\C}{\mathbb{C}}
\DeclareMathOperator{\Z}{\mathbb{Z}}
\DeclareMathOperator{\N}{\mathbb{N}}
\DeclareMathOperator{\PP}{\mathbb{P}}
\DeclareMathOperator{\FSch}{\mathbf{FSch}}
\DeclareMathOperator{\Cring}{\mathbf{Cring}}
\DeclareMathOperator{\Grp}{\mathbf{Grp}}
\DeclareMathOperator{\Q}{\mathbb{Q}}
\DeclareMathOperator{\CalO}{\mathcal{O}}
\DeclareMathOperator{\Spec}{Spec}
\DeclareMathOperator{\Sch}{\mathbf{Sch}}
\DeclareMathOperator{\Dom}{Dom}
\DeclareMathOperator{\FinSet}{\mathbf{FinSet}}
\DeclareMathOperator{\CalF}{\mathcal{F}}
\DeclareMathOperator{\AffSch}{\mathbf{AffSch}}
\DeclareMathOperator{\Cat}{\mathbf{Cat}}
\DeclareMathOperator{\CalE}{\mathcal{E}}
\DeclareMathOperator{\Top}{\mathbf{Top}}
\DeclareMathOperator{\Fp}{\mathbb{F}_{\mathnormal{p}}}
\DeclareMathOperator{\Gr}{Gr}
\DeclareMathOperator{\LocRingSpac}{\mathbf{LRS}}
\DeclareMathOperator{\Spf}{Spf}
\DeclareMathOperator{\Codom}{Codom}
\DeclareMathOperator{\pfrak}{\mathfrak{p}}
\DeclareMathOperator{\A}{\mathbb{A}}
\DeclareMathOperator{\AFJ}{\mathcal{A}_{\mathnormal{F}}^{\mathnormal{J}}}
\DeclareMathOperator{\Fi}{Fi}
\DeclareMathOperator{\op}{op}
\DeclareMathOperator{\Ccat}{\mathscr{C}}
\DeclareMathOperator{\Dcat}{\mathscr{D}}
\DeclareMathOperator{\GL}{GL}
\let\emptyset\varnothing
\let\phi\varphi
\let\epsilon\varepsilon
\newcommand{\pushoutcorner}[1][dr]{\save*!/#1+1.2pc/#1:(1,-1)@^{|-}\restore} 
\newcommand{\pullbackcorner}[1][dr]{\save*!/#1-1.2pc/#1:(-1,1)@^{|-}\restore}
\numberwithin{Theorem}{section}
\begin{document}

\renewcommand{\thefootnote}{\fnsymbol{footnote}} 
\title{The Greenberg Functor is Site Cocontinuous}
\author{Geoff Vooys \\ gmvooys@gmail.com \\ Department of Mathematics and Statistics \\ University of Calgary, Calgary, Alberta, Canada}
\date{\today}
\footnotetext{2010 {\em Mathematics Subject Classification.} Primary 14F05; Secondary 18F20, 14B99, 18F10}
\renewcommand{\thefootnote}{\arabic{footnote}}

\maketitle
\begin{abstract}
In this paper we show that it is possible to define a topology on the category of formal schemes over a ring of $p$-adic integers such that the left adjoint of the Greenberg Transform is a site cocontinuous functor when we equip the  category of schemes over the residue field with the {\'e}tale topology. We show furthermore that this topology allows us to give an isomorphism between the corresponding fundamental groups, and use this isomorphism to show that it is possible to geometrize the quasicharacters of a $p$-adic torus by a local system on a formal scheme over the ring of $p$-adic integers.
\end{abstract}

\section{Introduction}


The Greenberg Transform, and its left adjoint (often called the Greenberg Functor), are two functors that are ubiquitous in arithmetic algebraic geometry. The Greenberg Transform itself first appeared in a proto-form in Serge Lang's thesis \cite{Lang} and was further explored and formally introduced by Marvin J.\@ Greenberg in \cite{Green1} and \cite{Green2}, where Greenberg showed that the transform that bears his name admits a left adjoint, as well as how to use the pair of functors to study torsors and cohomology. More recently, these functors have appeared in the book \cite{NeronModels} by Bosch, L{\"u}tkebohmert, and Raynaud on N{\'e}ron Models; in the work of Buium to study $p$-jet spaces (cf.\@ \cite{BuiumPJet}); in the work of Nicaise and Sebag to study motivic properties of (formal) schemes (cf.\@ \cite{NicaiseSebagMSIandWR} and \cite{NicaiseSebagMIand}); in the work of Cunningham and Roe in \cite{CunningRoe} to geometrize quasicharacters of $p$-adic tori; in the work of Yu in the study of smooth models as they are used in Bruhat-Tits theory (cf.\@ \cite{Yu}); in the work of Bertapelle and Tong to study the Picard group and the pro-algebraic structures of Serre (cf.\@ \cite{BertapelleTong}); and in the work of Bhatt and Scholze in defining and proving the representability of the positive $p$-adic loop group functor (cf.\@ Proposition 9.2 of \cite{BhattScholze}). 

While the Greenberg functor and the Greenberg Transform have given myriad tools with which to  study schemes and their arithmetic properties, especially by relating the mixed-characteristic case of formal schemes over $\Spec R$ to schemes over the residue field $\Spec k$, the Greenberg Transform was written in a pre-Grothendieck language, which made applying the rich theory around it more difficult. It was this issue that lead Bertapelle and Gonz{\'a}lez-Avil{\'e}s to revist and recast the Greenberg Transform into scheme (and formal scheme)-theoretic language of modern algebraic geometry in \cite{RevisGreen}. This significantly helped in the study of the Greenberg Transform, as it established many site-theoretic and geometric properties of the functor itself, and studied the Greenberg Transform in great detail. However, in contrast to the in-depth study and development of the Greenberg Transform, the theory surrounding the Greenberg functor is less developed.

In this paper, we work with and study the Greenberg functor and consider an application to local systems. In particular, we show the Greenberg functor is site-cocontinuous for certain Grothendieck topologies on $\Sch_{/\Spec k}$ and $\FSch_{/\Spec R}$, where $k$ is the residue field of a complete integral extension $R/\Z_p$ and $\FSch_{/\Spec R}$ is the category of formal schemes over $\Spec R$. More precisely, we will define topologies on the category $\FSch_{/\Spec R}$ for which the Greenberg functor $\hfrak:\Sch_{/\Spec k} \to \FSch_{/\Spec R}$ becomes cover-reflecting for the {\'e}tale and fppf topologies. This means that if $\lbrace \phi_i:X_i \to X \; | \; i \in I \rbrace$ is a collection of morphisms over $X$ in $\Sch_{/\Spec k}$, and if $\lbrace F\phi_i:FX_i \to FX \; | \; i \in I \rbrace$ generates a covering sieve over $FX$, then the set $\lbrace \phi_i:X_i \to X \; | \; i \in I \rbrace$ is a cover over $X$. This topology, which we will call $K$ for the moment (cf.\@ Definition \ref{Definition: Adhesive Site} for an explicit description), allows us to show that there is a canonical isomorphism of fundamental groups
\[
\pi^{\acute{E}t}_{1}(X,x) \cong \pi_{1}^{K}(\hfrak X, \hfrak x)
\]
for any scheme $X$ over $\Spec k$. In particular, from this group isomorphism, we show that there is an isomorphism of the category of {\'e}tale local systems over a $k$-scheme $X$ with the category of local systems in the $K$ topology over $\hfrak X$. 

\begin{Theorem}[cf.\@ Theorem \ref{Theorem: Gometrization Theorem}]
	For any group scheme $G$ over $\Spec k$ with geometric point $g$ of $G$, if $\hfrak$ is the Greenberg functor, then there is an isomorphism of categories
	\[
	\mathbf{Rep}(\pi_1^{\acute{E}t}(G,g)) \cong \mathbf{Rep}(\pi_1^{K}(\hfrak G, \hfrak g)).
	\]
	In particular, this gives an isomorphism of  categories
	\[
	\mathbf{Loc}_{\acute{E}t}(G) \cong \mathbf{Loc}_{K}(\hfrak G).
	\]
\end{Theorem}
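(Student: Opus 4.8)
The plan is to deduce the theorem from the fundamental group isomorphism $\pi_1^{\acute{E}t}(X,x) \cong \pi_1^K(\hfrak X, \hfrak x)$ established above, specialised to $X = G$ and $x = g$, together with the standard identification of local systems with representations of the fundamental group. For the first isomorphism I would first record the elementary fact that an isomorphism $\theta \colon \pi \xrightarrow{\sim} \pi'$ of topological (here profinite) groups induces an isomorphism of categories $\theta^{*} \colon \mathbf{Rep}(\pi') \xrightarrow{\sim} \mathbf{Rep}(\pi)$, which sends a continuous representation $\rho \colon \pi' \to \GL(V)$ to $\rho \circ \theta$ and is the identity on morphisms, with strict inverse $(\theta^{-1})^{*}$; the only input is that $\theta$ and $\theta^{-1}$ are continuous. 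Applying this to the canonical isomorphism $\pi_1^{\acute{E}t}(G,g) \cong \pi_1^K(\hfrak G, \hfrak g)$ — which is an isomorphism of profinite groups because it is induced by an equivalence of the underlying Galois categories compatible with the fibre functors at $g$ and $\hfrak g$ — gives the first displayed isomorphism of the theorem.

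For the second ("in particular") statement I would invoke on each side the standard dictionary between local systems and representations of the fundamental group. On the {\'e}tale side, $\mathbf{Loc}_{\acute{E}t}(G) \cong \mathbf{Rep}(\pi_1^{\acute{E}t}(G,g))$ is classical: via the fibre functor at $g$, a locally constant sheaf with finite coefficients on $G$ is exactly a continuous action of $\pi_1^{\acute{E}t}(G,g)$ on a finite module, and passing to pro-systems and to the $\ell$-adic coefficient setting is routine. On the $K$ side I would prove the parallel statement $\mathbf{Loc}_K(\hfrak G) \cong \mathbf{Rep}(\pi_1^K(\hfrak G, \hfrak g))$: since $\pi_1^K(\hfrak G, \hfrak g)$ is, by construction, the automorphism group of the fibre functor on the Galois category of finite $K$-locally constant objects over $\hfrak G$, the same fibre-functor argument together with the same linearisation step yield the equivalence, once one knows the $K$-localised topos at $\hfrak G$ is connected so that the fibre functor is conservative and the Galois formalism applies. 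Chaining
\[
\mathbf{Loc}_{\acute{E}t}(G) \cong \mathbf{Rep}(\pi_1^{\acute{E}t}(G,g)) \cong \mathbf{Rep}(\pi_1^K(\hfrak G, \hfrak g)) \cong \mathbf{Loc}_K(\hfrak G)
\]
then produces the asserted isomorphism of categories.

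The step I expect to be the main obstacle is the $K$-side dictionary, that is, checking that the $K$-topology on $\FSch_{/\Spec R}$ is tame enough at $\hfrak G$ for locally constant sheaves to be controlled by $\pi_1^K$ in the usual Galois-theoretic way. Here the site-cocontinuity of $\hfrak$ should supply exactly what is needed: it induces a morphism of topoi $\Shv_{\acute{E}t}(\Sch_{/\Spec k}) \to \Shv_K(\FSch_{/\Spec R})$, and after restricting to the slices over $G$ and $\hfrak G$ the corresponding pullback functor restricts to an equivalence on the full subcategories of locally constant objects; this is a reformulation of the equivalence of Galois categories underlying the $\pi_1$-isomorphism, and it transports connectedness, the existence and conservativity of the fibre functor, and compatibility with the linearisation step from the {\'e}tale side to the $K$ side. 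A secondary point is that $G$ need not be connected as a $k$-scheme; I would dispose of this either by running the fibre-functor argument over the connected component containing the image of $g$ (whose finite {\'e}tale component group is matched on the two sides by cocontinuity) or, more uniformly, by replacing $\pi_1$ throughout by the fundamental groupoid.
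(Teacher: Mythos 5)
Your proposal matches the paper's own argument: the paper also deduces the result directly from the fundamental-group isomorphism of Theorem \ref{Theorem: Isomorphism of Fundamental Groups} (specialised to $\hfrak$ via Lemma \ref{Lemma: Greenberg pullback is fully faithful}), noting that isomorphic (profinite) groups have isomorphic categories of continuous representations, and then identifies local systems with representations of $\pi_1$ on each side. Your write-up is in fact more explicit than the paper's one-line justification, particularly in spelling out the $K$-side dictionary $\mathbf{Loc}_{K}(\hfrak G)\cong\mathbf{Rep}(\pi_1^{K}(\hfrak G,\hfrak g))$ via the Galois-category formalism, but the route is the same.
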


Our study of the Greenberg functor is motivated in two directions: 
\begin{enumerate}
	\item The first motivation comes from wishing to better understand the Greenberg Transform and a desire to use it in arithmetic algebraic geometry, since understanding a right adjoint functor is to understand its left adjoint, and dually. Furthermore, a deeper understanding of this left adjoint and how it affects Grothendieck topologies on its domain will lead to a comparison between Grothendieck topologies on schemes over fields of prime characteristic and Grothendieck topologies on categories of formal schemes over $p$-adically complete rings.
	\item The second motivation comes from \cite{CunningRoe} in which the authors begin with a torus $T$ over  a $p$-adic field $F$, take its N{\'e}ron model $N_T$ over its ring of integers $\CalO_F$, and then take the Greenberg Transform of $N_T$, to define a scheme $X$ over $\Spec k$ for which there is an identification of groups
	\[
	T(F) \cong N_T(\CalO_F) \cong X(k).
	\]
	By then taking the category of {\'e}tale local systems over $X$ and using the Trace of Frobienius, they arrive at the following: Any quasicharacter of $T(F)$ is geometrized as the Trace of Frobenius of some {\'e}tale local system $\mathcal{L}$. This causes one to ask whether it is possible to construct a category of local systems in some topology over a formal scheme over $\Spec \CalO_F$ for which this new local system geometrizes the quasicharacter $\chi:T(F) \to \C^{\ast}$ directly.
\end{enumerate}
In an attempt to address both of the considerations above, we are lead to study a class of functors, which we call geometrically adhesive (cf.\@ Definition \ref{Definition: Geometrically Adhesive}); Corollary \ref{Cor: Greenberg functor is adhesive} shows that the Greenberg functor is geometrically adhesive. With this notion, we build a topology which allows the transfer of sites from categories of schemes to categories of formal schemes in the sense that the fundamental groups are preserved by the transfer. This is Theorem \ref{Theorem: Isomorphism of Fundamental Groups}, which is the main theorem of the paper:
\begin{Theorem}[cf.\@ Theorem \ref{Theorem: Isomorphism of Fundamental Groups}]
Let $F:\Ccat \to \Dcat$ be a geometrically adhesive functor and assume that there is a Grothendieck topology $J$ on $\Ccat$ for which $\Shv(\Ccat,J)_{lcf}$ is a Galois category. Then there is a topology $K$ on $\Dcat$ and a functor $F^{\ast}:\Shv(\Dcat,K)_{lcf} \to \Shv(\Ccat,J)_{lcf}$ such that if $F^{\ast}$ is fully faithful, there is an isomorphism of fundamental groups
\[
\pi_1^{J}(X,x) \cong \pi_1^{K}(FX,Fx)
\]
for all objects $X$ of $\Ccat$ and geometric points $x$ of $X$.
\end{Theorem}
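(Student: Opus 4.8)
The plan is to build from the geometrically adhesive functor $F$ a Grothendieck topology $K$ on $\Dcat$ for which $F$ is cocontinuous, to identify $F^{\ast}$ with the restriction to local systems of the pullback functor $\mathcal{G}\mapsto\mathcal{G}\circ F^{\op}$ (which lands in sheaves precisely because of cocontinuity), and then to show that $F^{\ast}$ is an \emph{equivalence} of Galois categories intertwining the stalk fibre functors; since an equivalence of Galois categories compatible with fibre functors induces an isomorphism of the associated profinite fundamental groups, this yields the asserted isomorphism $\pi_1^{J}(X,x)\cong\pi_1^{K}(FX,Fx)$. Full faithfulness of $F^{\ast}$ being part of the hypothesis, the real work lies in the other half --- essential surjectivity --- which is where the geometrically adhesive structure enters in an essential way.

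First I would define $K$ as in Definition \ref{Definition: Adhesive Site}: a sieve on an object of $\Dcat$ is declared $K$-covering when it is generated --- under the axioms of a Grothendieck topology together with the adhesive closure conditions afforded by the geometrically adhesive hypothesis --- by a family of the form $\lbrace F\phi_i\colon FX_i\to FX\rbrace_{i\in I}$ with $\lbrace\phi_i\colon X_i\to X\rbrace_{i\in I}$ a $J$-cover. By construction $F\colon(\Ccat,J)\to(\Dcat,K)$ is cover-reflecting, hence cocontinuous, so the presheaf pullback $F^{\ast}\mathcal{G}:=\mathcal{G}\circ F^{\op}$ carries $K$-sheaves to $J$-sheaves without any sheafification, and it is exact: it preserves finite limits pointwise, and cocontinuity makes it commute with sheafification, hence preserve finite colimits. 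If $\mathcal{M}$ is a finite locally constant $K$-sheaf, it trivialises over a $K$-covering sieve $S$ of some $FX$; cover-reflection turns $F^{\ast}S$ into a $J$-covering sieve of $X$ over which $F^{\ast}\mathcal{M}$ is constant, and its stalks $(F^{\ast}\mathcal{M})_x\cong\mathcal{M}_{Fx}$ are finite, so $F^{\ast}\mathcal{M}$ is again finite locally constant. Thus $F^{\ast}$ restricts to a functor $\Shv(\Dcat,K)_{lcf}\to\Shv(\Ccat,J)_{lcf}$, and a direct inspection of the shape of $K$-covers shows $\Shv(\Dcat,K)_{lcf}$ satisfies the axioms of a Galois category, so that $\pi_1^{K}(FX,Fx)$ makes sense.

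Next I would pin down the fibre-functor compatibility. A geometric point $x$ of $X$ gives a fibre functor $\omega_x$ on $\Shv(\Ccat,J)_{lcf}$ by passing to the stalk, and $Fx$ gives $\omega_{Fx}$ on $\Shv(\Dcat,K)_{lcf}$; since, by construction of $K$ and cocontinuity, the $F$-images of the $J$-neighbourhoods of $x$ form a cofinal system among the $K$-neighbourhoods of $Fx$, the colimits defining these two stalks agree and there is a canonical natural isomorphism $\omega_x\circ F^{\ast}\cong\omega_{Fx}$. Hence $F^{\ast}$ is an exact functor of Galois categories compatible with fibre functors, and so induces a continuous homomorphism $\pi_1^{J}(X,x)\to\pi_1^{K}(FX,Fx)$; by the standard criterion for morphisms of Galois categories this homomorphism is surjective because $F^{\ast}$ is fully faithful, and it is injective once every object of $\Shv(\Ccat,J)_{lcf}$ is a subobject of one in the essential image of $F^{\ast}$. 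Together these two conditions amount exactly to $F^{\ast}$ being an equivalence, at which point the isomorphism of fundamental groups follows formally from $\omega_x\circ F^{\ast}\cong\omega_{Fx}$.

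It therefore remains to prove that $F^{\ast}$ is essentially surjective, and this I expect to be the crux. Given a finite locally constant $J$-sheaf $\mathcal{L}$ on $\Ccat$, pick a $J$-cover $\lbrace\phi_i\colon X_i\to X\rbrace$ over which $\mathcal{L}$ becomes constant. The geometrically adhesive hypothesis guarantees not only that $\lbrace F\phi_i\colon FX_i\to FX\rbrace$ is a $K$-cover, but --- crucially --- that the overlaps $FX_i\times_{FX}FX_j$ and their triple analogues controlling the associated descent datum are governed by the $X_i\times_X X_j$ in $\Ccat$, so that the constant descent datum for $\mathcal{L}$ transports to an effective descent datum over the $F$-image cover; gluing it yields a finite locally constant $K$-sheaf $\mathcal{M}$ on $\Dcat$ with $F^{\ast}\mathcal{M}\cong\mathcal{L}$. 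The main obstacle is exactly this transfer of descent: one must show that the adhesive structure makes effective descent for local systems available on $\Dcat$ along the covers pulled back from $J$ through $F$, and do so compatibly with $F^{\ast}$; by comparison, verifying that $\Shv(\Dcat,K)_{lcf}$ is a Galois category and that $F^{\ast}$ respects the fibre functors is comparatively routine bookkeeping with $K$-covers. With essential surjectivity in hand, the assumed full faithfulness makes $F^{\ast}$ an equivalence of Galois categories compatible with fibre functors, and therefore $\pi_1^{J}(X,x)\cong\pi_1^{K}(FX,Fx)$ for every object $X$ of $\Ccat$ and every geometric point $x$ of $X$, as claimed.
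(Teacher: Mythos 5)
Your reduction to an equivalence of Galois categories is where the argument breaks: you assume full faithfulness of $F^{\ast}$ (as the theorem allows) but then make the whole proof rest on \emph{essential surjectivity} of $F^{\ast}$, which is neither a hypothesis nor a consequence of the hypotheses, and your sketch of it does not go through. First, the descent-transfer step relies on the overlaps $FX_i\times_{FX}FX_j$ being ``governed by'' $F(X_i\times_X X_j)$; but geometric adhesiveness (Definition \ref{Definition: Geometrically Adhesive}) only controls unions and intersections of \emph{open} subobjects, and for a general $J$-cover (\'etale, fppf) the fibre products $X_i\times_X X_j$ are not intersections of opens, so no preservation statement of this kind is available. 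Second, even granting the transported descent datum, the glued object $\mathcal{M}$ must be a sheaf on \emph{all} of $\Dcat$, including objects $V$ admitting no relation to the image of $F$; the paper's counterexample with $\Spec\Z_{\ell}$ adjoined (and the resulting assumption A3 of Section 4, which is part of Theorem \ref{Theorem: Isomorphism of Fundamental Groups} but absent from your write-up) shows that the behaviour of $K$-sheaves away from the image of $F$ is genuinely unconstrained, so a cover-by-cover gluing over $FX$ does not produce, let alone canonically, an object of $\Shv(\Dcat,\AFJ)_{lcf}$ pulling back to $\mathcal{L}$. Third, essential surjectivity of $F^{\ast}$ would amount to invertibility of the unit of $F_{!}\dashv F^{\ast}$ on lcf objects, i.e.\ to $F^{\ast}$ being an equivalence; full faithfulness of $F^{\ast}$ only gives that the counit $F_{!}F^{\ast}\to\id$ is invertible, which is strictly weaker.

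The paper's proof deliberately avoids this issue: it never shows $F^{\ast}$ is essentially surjective. Instead it uses the left adjoint $F_{!}$ from the essential geometric morphism of Theorem \ref{Theorem: The pullback/pushforward adjunction} to transfer the pro-representing (normal) objects: $F_{!}$ preserves indecomposables (Lemma \ref{Lemma: F lower shriek preserves indecomposables}), sends normal objects to normal objects (Corollary \ref{Cor: Transfer of Normal Objects}), the objects $F_{!}N$ pro-represent the fibre functor $\Fi\circ F^{\ast}$ (Corollary \ref{Cor: Pushforward fibre functor represented by shriek of normals}), and full faithfulness of $F^{\ast}$ enters only to give $\Shv(\Ccat,J)(N,N)\cong\Shv(\Dcat,\AFJ)(F_{!}N,F_{!}N)$ (Lemma \ref{Isomorphism of normal groups}); passing to the limit over this cofinal system of normal objects yields $\pi_1^{J}(X,x)\cong\pi_1^{\AFJ}(FX,Fx)$. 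Your Galois-theoretic frame (surjectivity from full faithfulness, injectivity from a subobject/essential-image condition) is standard and fine as far as it goes, but to complete your route you would have to prove the missing essential surjectivity (or at least the weaker ``every lcf $J$-sheaf is a subquotient of something in the image of $F^{\ast}$''), and the mechanism you propose for it does not follow from the adhesive hypothesis. You also assert without argument that $\Shv(\Dcat,\AFJ)_{lcf}$ is a Galois category with stalk fibre functor at $Fx$; in the paper this is exactly what the chain of lemmas about $F_{!}N$ is supplying, so it cannot be waved through as ``routine bookkeeping.''
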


As an application, we show that the pullback functor $\hfrak^{\ast}:\Shv(\FSch_{/\Spec R},J) \to \Shv(\Sch_{/\Spec k},J)$ is fully faithful (cf.\@ Lemma \ref{Lemma: Greenberg pullback is fully faithful}). Furthermore, we derive:
\begin{corollary}
If $J$ is a topology on $\Sch_{/\Spec k}$ for which $\Shv(\Sch_{/\Spec k},J)$ is a Galois category and if $\hfrak$ is the Greenberg functor, then for any $k$-scheme $X$ there is an isomorphism of fundamental groups
\[
\pi^{J}_{1}(X,x) \cong \pi_{1}^{K}(\hfrak X, \hfrak x).
\]
\end{corollary}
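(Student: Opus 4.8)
The plan is to obtain this corollary as a direct specialization of Theorem \ref{Theorem: Isomorphism of Fundamental Groups} to the case $F = \hfrak : \Sch_{/\Spec k} \to \FSch_{/\Spec R}$. First I would verify that the two hypotheses of that theorem hold for this choice of $F$. The first hypothesis, that $F$ is geometrically adhesive, is exactly Corollary \ref{Cor: Greenberg functor is adhesive}. The second hypothesis asks for a Grothendieck topology $J$ on the source for which $\Shv(\Sch_{/\Spec k},J)_{lcf}$ is a Galois category, and this is precisely what we are assuming (the Galois-category hypothesis in the statement being understood as the one on the $lcf$ subcategory that features in the theorem). Granting these, Theorem \ref{Theorem: Isomorphism of Fundamental Groups} produces a topology $K$ on $\FSch_{/\Spec R}$ --- the topology of Definition \ref{Definition: Adhesive Site} --- together with a pullback functor $\hfrak^{\ast} = F^{\ast} : \Shv(\FSch_{/\Spec R},K)_{lcf} \to \Shv(\Sch_{/\Spec k},J)_{lcf}$.

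To invoke the conclusion of the theorem it then remains to know that $F^{\ast}$ is fully faithful. This is the content of Lemma \ref{Lemma: Greenberg pullback is fully faithful}, which shows the Greenberg pullback $\hfrak^{\ast}$ is fully faithful on the sheaf categories; since restricting a fully faithful functor to a full subcategory of its domain preserves full faithfulness, its restriction to $lcf$ objects --- which lands in the $lcf$ subcategory on the target by the construction of $K$ --- is again fully faithful. Hence the hypothesis of Theorem \ref{Theorem: Isomorphism of Fundamental Groups} that $F^{\ast}$ be fully faithful is satisfied, and the theorem delivers the isomorphism $\pi_1^{J}(X,x) \cong \pi_1^{K}(\hfrak X,\hfrak x)$ for every $k$-scheme $X$ and every geometric point $x$ of $X$, which is the claim.

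The step demanding the most care is organizational rather than mathematical: one must confirm that the topology $K$ abstractly produced by Theorem \ref{Theorem: Isomorphism of Fundamental Groups} is the topology of Definition \ref{Definition: Adhesive Site} --- so that $\pi_1^{K}(\hfrak X,\hfrak x)$ carries its intended meaning --- that the abstract functor $F^{\ast}$ of the theorem coincides with the concrete Greenberg pullback of Lemma \ref{Lemma: Greenberg pullback is fully faithful}, and that the Galois-category hypothesis recorded here matches the one required by the theorem. Each of these compatibilities is already built into the constructions that precede the corollary, so once they are made explicit the argument reduces to a formal chain of implications.
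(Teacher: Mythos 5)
Your proposal is correct and follows essentially the same route as the paper: the corollary is obtained by specializing Theorem \ref{Theorem: Isomorphism of Fundamental Groups} to $F=\hfrak$, with geometric adhesiveness supplied by Corollary \ref{Cor: Greenberg functor is adhesive}, full faithfulness of $\hfrak^{\ast}$ by Lemma \ref{Lemma: Greenberg pullback is fully faithful}, and the topology $K$ identified with the adhesive site $\mathcal{A}_{\hfrak}^{J}$ of Definition \ref{Definition: Adhesive Site}. The only point left tacit (by you and by the paper alike) is the theorem's additional hypothesis that every object $V$ of $\FSch_{/\Spec R}$ admits a morphism to or from some $\hfrak X$, which is worth stating explicitly when the theorem is invoked.
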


\section{An Introduction to Geometrically Adhesive Functors, with Motivation}\label{Section: Intro to adhesion}
We begin by recalling the full-level Greenberg Transform for a complete field extension $K/\Q_p$; cf.\@ \cite{RevisGreen} for a modern account and \cite{Green1} and \cite{Green2} for the introduction of the functors. Begin by assuming that $R/\Z_p$ is an absolutely unramified extension ring of $\Z_p$ with residue field $k/\Fp$ and fraction field $K/\Q_p$. Furthermore, if $n \in \N$, define the scheme
\[
S_n := \Spec \frac{R}{\pfrak^{n+1}}
\]
so that
\[
\Spec R \cong \Spec\left(\lim_{\longleftarrow} \frac{R}{\pfrak^{n+1}}\right) \cong \lim_{\longrightarrow}\Spec\frac{R}{\pfrak^{n+1}} = \lim_{\longrightarrow} S_n,
\]
where the limit is calculated in the category $\Cring$ of commutative rings with identity and the colimit is calculated in the category $\AffSch$ of affine schemes. Then the full-level Greenberg Transform is a functor $\Gr:\FSch_{/\Spec R} \to \Sch_{/\Spec k}$, where $\FSch_{/\Spec R}$ is the category of formal schemes over the scheme $\Spec R$, has the left adjoint $\mathfrak{h}:\Sch_{/\Spec k} \to \FSch_{/\Spec R}$, i.e., we have an adjucntion diagram
\[
\begin{tikzcd}
\Sch_{/\Spec k} \ar[r,bend left,"\hfrak",""{name=A, below}] & \FSch_{/\Spec R}\ar[l,bend left,"F^{\ast}",""{name=B,above}] \ar[from=A, to=B, symbol=\dashv]
\end{tikzcd}
\]
In order to give a particularly concrete description of $\hfrak$, we now assume that the ramification degree $e = [K:\Q_p]/[k:\Fp] = 1$, i.e., $K$ is unramified over $\Q_p$. The left adjoint $\mathfrak{h}$ then can be seen as the colimit of the functors
\[
h_n:\Sch_{/\Spec k} \to \Sch_{/S_n} \hookrightarrow \FSch_{/\Spec R}
\]
where each $h_n$ is defined by, for each scheme $X = (\lvert X \rvert, \CalO_X)$ in $\Sch_{/\Spec k}$,
\[
h_n\lvert X \rvert := \lvert X \rvert
\]
and
\[
h_n\CalO_X := W_{n+1}\CalO_X
\]
where $W_n\CalO_X$ is the sheaf of length $n$ Witt Vectors in $\CalO_X$; note that the action of $W_{n+1}\CalO_X$ is given by, for each open set $U \subseteq \lvert X \rvert$,
\[
\bigg(W_{n+1}\CalO_{X}\bigg)(U) =  W_{n+1}\bigg(\CalO_X(U)\bigg),
\]
where $W_{n+1}(\CalO_X(U))$ is the ring of length $n+1$ Witt Vectors with coefficients in $\CalO_X(U)$. It is worth observing that na{\"i}vely these functors do not admit a colimit; however, since there is a natural map
\[
S_{m} \to S_{n}
\]
whenever $m \leq n$, as it comes from the correspondence $\Sch(S_m,S_n) \cong \Cring(R/\pfrak^{n+1}, R/\pfrak^{m+1})$;
in this way a scheme over $S_m$ is a scheme over $S_n$ by simply post-composing by map $S_m \to S_n$, which itself allows us to regard each functor $h_m:\Sch_{/\Spec k} \to \Sch_{/S_{m}}$ as a functor $h_m:\Sch_{/\Spec k} \to \Sch_{/S_n}$. Through this process, we derive a natural transformation $h_n \to h_{n+1}$ for all $n \in \N$. Taking the colimit of these functors $h_n$, together with all necessary embeddings of categories $\Sch_{/S_n} \to \FSch_{/\Spec R}$, gives the desired colimit
\[
\mathfrak{h}:\Sch_{/\Spec k} \to \FSch_{/\Spec R}.
\]
From these definitions a routine calculation implies that for each scheme $X = (\lvert X \rvert, \CalO_X)$ in $\Sch_{/\Spec k}$, we have that
\[
\mathfrak{h}\lvert X \rvert = \lvert X \rvert
\]
and
\[
\mathfrak{h}\CalO_X = W\CalO_X,
\]
where $W\CalO_X$ is the sheaf of Witt Vectors on $\CalO_X$. A standard gluing argument then allows one to show that if $\lbrace U_i \; | \; i \in I \rbrace$ is an open gluing of a scheme $X$ in $\Sch_{/\Spec k}$, then
\[
\mathfrak{h}\left(\bigcup_{i \in I} U_i\right) = \mathfrak{h}X = \bigcup_{i \in I} \mathfrak{h}U_i;
\]
such a geometric condition is very powerful and is easier to work with than the mystical and difficult to understand Greenberg Transform itself. In particular, we call such a functor {\em geometrically adhesive} because it preserves all possible gluing!
\begin{remark}
The union written above is meant simply as a short hand for a specific colimit. Explicitly, for locally ringed spaces $U$ and $V$, we write
\[
U \cup V := U \coprod_{X} V
\]
as the gluing pushout of along their intersection $X \to U$ and $X \to V$. Similarly, we write $U \cap V$ to denote the pullback of $U$ and $V$ along the morphisms $U \to U \cup  V$ and $V \to U \cup V$. In particular, the diagram
\[
\xymatrix{
U \cap V \pullbackcorner \ar[r] \ar[d] & U \ar[d] \\
V \ar[r] & \pushoutcorner U  \cup V	
}
\]
in $\LocRingSpac$ is simultaneously a pushout and pullback diagram.
\end{remark}
\begin{remark}
If the algebraic field extension $F/\Q_p$ has nontrivial ramification, say as in the quadratic extension $\Q_p(\sqrt{p})$, then there is an analogous story to the one told above that may be used to construct the Greenberg Transform and its left adjoint $\hfrak$. It essentially the same construction, save for now we take products of the $W_n\CalO_X$ sheaves to build up the Eisenstein equation of ramification termwise, and then build up our Witt Vector structure with these ramifications in mind. Explicit details may be found in \cite{RevisGreen}, but will not be particularly relevant for this paper, save for in providing intuition for the explicit case to which we wish to apply our results.
\end{remark}

We will now codify some notation useful as we proceed in the section. In particular, let  $\Ccat$ and $\Dcat$ be subcategories of the category $\LocRingSpac$ of locally ringed spaces or slice categories of subcategories of $\LocRingSpac$.
\begin{definition}\label{Definition: Geometrically Adhesive}
A functor $F:\Ccat \to \Dcat$ is said to be {\em geometrically adhesive} if, whenever $U = (\lvert U \rvert, \CalO_U) \in \ob\Ccat_{/X}$ has a collection of open subobjects $\lbrace V_j \; | \; j \in J \rbrace$ with
\[
\bigcup_{j \in J} V_j = V,
\]
then
\[
F\left(\bigcup_{j \in J}V_j\right) = \bigcup_{j \in J} FV_j.
\]
\end{definition}
\begin{remark}
For the reader familiar with the work of Lack, Cockett, et al.\@ (cf.\@ \cite{RobinGeoff}, \cite{RobinJohnGeoff}, \cite{RobinLackIII}, amongst others), one may notice the similarity of a join restriction functor between join restriction categories, or perhaps more readily with adhesive functors between adhesive categories (cf.\@ \cite{AdhesiveQuasiAdhesive}, for instance), and our given definition of a geometrically adhesive functor above. There are similarities, especially in the fact that these are all getting towards some sort of manifold-type construction and intuition, but we have given the definition above for the following reasons:
\begin{itemize}
	\item The  definition of a geometrically adhesive functor is intimately tied to the sheaves that come equipped with a locally ringed space, and geometrically adhesive functors are built to bring this perspective to mind and to task.
	\item We are not giving any extra attention to restriction or join properties of the categories $\Ccat$ or $\Dcat$, if nontrivial versions of said structures even exist, save for potentially in incidental circumstances.
	\item The  definition of join restriction functors is tied to the abstract differential geometry of Cartesian differential categories. Because we are focusing only on the algebro-geometric aspects induced by these functors, we provide an algebraic dual to the more analytic perspective afforded by join restriction functors.
	\item These are different from the adhesive functors of Lack (save for perhaps morally) in the following way: A geometrically adhesive functor only asks to preserve gluings of open subobjects between arbitrary categories of locally ringed spaces, while adhesive functors in the sense of Lack are functors between adhesive categories that preserve pushouts against arbitrary monomorphisms. In particular, Proposition \ref{Proposition: Adhesive need not be cocont} shows that in general these are different notions.
	\item Proposition \ref{Proposition: Adhesive need not be cocont} shows, in addition to differentiating adhesive functors from geometrically adhesive functors, that geometrically adhesive functors are not necessarily cocontinuous.
\end{itemize}
\end{remark}

\begin{remark}
Not all functors are geometrically adhesive. For instance, consider the functor $\Gamma:\Sch \to \Sch$ given by
\[
\Gamma(X) \mapsto \Spec(\CalO_X(X)).
\]
This functor is not geometrically adhesive because it destroys nonaffine schemes and their affine gluings. In particular, recall that if $X$ is affine if and only if
\[
\Gamma(X) \cong \Gamma(\Spec A) = \Spec(\CalO_A(\lvert \Spec A\rvert)) = \Spec A.
\]
To see that this is not geometrically adhesive, consider the scheme $\mathbb{P}^1_{\Z}$ with the gluing
\[
\xymatrix{
 & \PP_{\Z}^{1} & \\
\A_{\Z}^1 \ar[ur] & & \A_{\Z}^{1} \ar[ul] \\
\Spec\Z[x,x^{-1}] \ar[u] \ar@<.5ex>[rr] & & \Spec\Z[y,y^{-1}] \ar@<.5ex>[ll]^{\cong} \ar[u]
}
\]
where the bottom isomorphism is induced by the $\Cring$ morphism $\phi:\Z[x,x^{-1}] \to \Z[y,y^{-1}]$ given by $x \mapsto y^{-1}$. Then we find that 
\[
\Gamma(\PP_{\Z}^1) = \Spec(\CalO_{\PP^1_{\Z}}(\lvert \PP^{1}_{\Z}\rvert)) = \Spec \Z
\]
so taking $\Gamma$ of the whole diagram gives the commuting diagram
\[
\xymatrix{
 & \Spec \Z & \\
\A^{1}_{\Z} \ar@{-->}[ur]^{\exists!} & & \A^1_{\Z} \ar@{-->}[ul]_{\exists!} \\
\ar[u] \Spec \Z[x,x^{-1}] \ar@<.5ex>[rr] & & \Spec \Z[y,y^{-1}] \ar@<.5ex>[ll]^{\cong} \ar[u]
}
\]
which is not a gluing diagram because, amongst other reasons, $\A^1_{\Z}$ is not a subscheme of $\Spec \Z$.
\end{remark}
\begin{example}
Let $\Dcat$ be a subcategory of $\LocRingSpac$ such that $\Spec 0 \in\ob\Dcat$. Then the functor $F:\Ccat \to \Dcat$ defined by $FU := \Spec 0$ is geometrically adhesive.
\end{example}
\begin{proposition}
Adhesive functors need not reflect isomorphisms.
\end{proposition}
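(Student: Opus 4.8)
The plan is to reuse, essentially verbatim, the geometrically adhesive functor produced in the example immediately above. Choose for $\Ccat$ any category of the type allowed in Definition~\ref{Definition: Geometrically Adhesive} which is \emph{not} a groupoid — say $\Ccat = \Sch$ — and for $\Dcat$ any subcategory of $\LocRingSpac$ with $\Spec 0 \in \ob\Dcat$ (for instance $\Dcat = \LocRingSpac$ itself, or the full subcategory on the single object $\Spec 0$). Let $F \colon \Ccat \to \Dcat$ be the constant functor, $FU := \Spec 0$ on objects and $Ff := \id_{\Spec 0}$ on morphisms. This is a bona fide functor, since it preserves identities and $\id_{\Spec 0} \circ \id_{\Spec 0} = \id_{\Spec 0}$, and by the preceding example it is geometrically adhesive.

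Next I would check that $F$ does not reflect isomorphisms. The morphism $\id_{\Spec 0}$ is an isomorphism of $\Dcat$, so $Ff$ is an isomorphism for \emph{every} morphism $f$ of $\Ccat$ whatsoever; on the other hand $\Ccat$, not being a groupoid, possesses a morphism $f$ that is not an isomorphism — when $\Ccat = \Sch$ one may take the structural morphism $f \colon \A^1_{\Z} \to \Spec\Z$, or even more cheaply the unique morphism $\Spec 0 \to \Spec\Z$ out of the empty scheme. For any such $f$, the image $Ff$ is an isomorphism while $f$ is not; hence $F$ is a geometrically adhesive functor that fails to reflect isomorphisms, which is the assertion.

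There is no real obstacle in this argument. The only points to keep in mind are that $\Ccat$ must be selected so that it actually contains a non-invertible morphism (any non-groupoid will do), and that $\Spec 0$ must be a genuine object of $\LocRingSpac$, which is precisely the standing hypothesis of the cited example. It is worth recording, in the same breath, that this same constant functor simultaneously exhibits the failure of faithfulness and of conservativity for geometrically adhesive functors, which underscores that the gluing-preservation condition places essentially no constraint on how a functor may collapse morphisms.
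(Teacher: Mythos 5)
Your proposal is correct and uses essentially the paper's own example: the constant functor $F(-) := \Spec 0$ from the example preceding the proposition. If anything, your version is slightly tighter than the paper's, since you exhibit an explicit non-invertible morphism (e.g.\ the structure map $\A^1_{\Z} \to \Spec \Z$) whose image $\id_{\Spec 0}$ is invertible, which is the literal content of failing to reflect isomorphisms, whereas the paper argues at the level of objects by noting $F(\Spec\Z) = F(\PP^1_{\Z})$ while $\Spec\Z \not\cong \PP^1_{\Z}$.
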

\begin{proof}
Let $F$ be the functor given in the above example and let $\Ccat := \Sch$. Consider the schemes $U_i, U_j := \mathbb{A}_{\Z}^1$ with gluing
\[
U_i \cup  U_j = \widetilde{U} := \mathbb{P}^1_{\Z}
\]
and define $U := \Spec \Z$. We then calculate that
\[
F(U) = F(\Spec \Z) = \Spec 0 = F(\mathbb{A}_{\mathbb{Z}}^1 \cup \mathbb{A}_{\Z}^1) = F(\mathbb{P}_{\Z}^1) = F(\widetilde{U})
\]
but, of course, $U = \Spec \Z \not\cong \mathbb{P}^1_{\Z} = \widetilde{U}$.
\end{proof}
\begin{proposition}
	Adhesive functors need not be faithful.
\end{proposition}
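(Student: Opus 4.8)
The plan is to reuse the constant functor from the example above. Take $\Ccat := \Sch$ and let $\Dcat$ be any subcategory of $\LocRingSpac$ containing $\Spec 0$ --- for concreteness, $\Dcat := \Sch$ --- and let $F : \Ccat \to \Dcat$ be the functor with $FU := \Spec 0$ for every object $U$, so that $Fg = \id_{\Spec 0}$ for every morphism $g$ (this being the only option, since $\Spec 0 = \emptyset$ is the initial object of $\Sch$ and $\Hom_{\Dcat}(\Spec 0, \Spec 0) = \lbrace \id_{\Spec 0} \rbrace$ is a singleton). By the example preceding this proposition, $F$ is geometrically adhesive, so it suffices to check that $F$ is not faithful.

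For that I would simply exhibit two distinct parallel morphisms in $\Sch$ that $F$ identifies. Since $\Spec$ is fully faithful on affines, $\Hom_{\Sch}(\mathbb{A}_{\Z}^1, \mathbb{A}_{\Z}^1) \cong \Hom_{\Cring}(\Z[t], \Z[t])$ contains, among others, the two distinct ring homomorphisms $t \mapsto t$ and $t \mapsto 0$, hence two distinct scheme morphisms $f, g : \mathbb{A}_{\Z}^1 \to \mathbb{A}_{\Z}^1$. Applying $F$ forces $Ff = Fg = \id_{\Spec 0}$, so $F$ collapses a hom-set of cardinality at least two and is therefore not faithful, while remaining geometrically adhesive by the example. (If one prefers parallel morphisms between distinct objects, $\Hom_{\Sch}(\Spec \Z, \mathbb{A}_{\Z}^1) \cong \Hom_{\Cring}(\Z[t], \Z) \cong \Z$ works equally well.)

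There is essentially no obstacle here: all the real content was already dispatched in the example, which shows that the constant functor at the zero scheme is geometrically adhesive, and the remaining point is the elementary observation that a functor sending every hom-set to a point cannot be faithful as soon as its domain has a hom-set of cardinality at least two. The only thing to keep an eye on is that the chosen codomain $\Dcat$ actually contains $\Spec 0$, so that the example applies verbatim and $F$ genuinely lands in $\Dcat$; this is immediate for $\Dcat = \Sch$ (or for any category of (formal) schemes used in the preceding results).
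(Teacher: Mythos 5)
Your proof is correct. You take the constant functor at $\Spec 0$ from the example preceding the proposition, note that the example already establishes it is geometrically adhesive, and then exhibit two distinct parallel morphisms (e.g.\ the endomorphisms of $\A_{\Z}^1$ corresponding to $t \mapsto t$ and $t \mapsto 0$) that are both sent to $\id_{\Spec 0}$; this is a complete and airtight argument. The paper's own proof is a one-liner that instead invokes ``the affinization functor above'' with $\Ccat = \Sch$, which is awkward as written, since the remark two items earlier shows precisely that the affinization functor $\Gamma(X) = \Spec(\CalO_X(X))$ fails to be geometrically adhesive on $\Sch$ (it destroys the gluing of $\PP^1_{\Z}$); presumably the intended witness is the same constant functor used in the preceding proposition on isomorphism reflection, i.e.\ exactly the functor you chose. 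So your route is in the same spirit as the paper's (a ``collapse everything'' functor that is adhesive for trivial reasons but kills hom-sets), but it is the more careful version: you verify that the chosen functor really is one of the geometrically adhesive examples and you name explicit morphisms that get identified, which the paper leaves implicit.
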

\begin{proof}
	The ``affinization'' functor $F$ above does the job for sufficiently chosen categories $\Ccat$ of locally ringed spaces. In particular, take $\Ccat = \Sch$.
\end{proof}
\begin{proposition}\label{Proposition: Adhesive need not be cocont}
Geometrically adhesive functors need not be cocontinuous. In particular, geometrically adhesive functors need not preserve gluings along closed subobjects.
\end{proposition}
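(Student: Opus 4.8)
The plan is to exhibit one explicit functor witnessing both assertions at once: a base-change functor which is geometrically adhesive yet fails to carry a certain pushout along a closed immersion to the corresponding pushout, and is therefore not cocontinuous. I would take $\Ccat = \Dcat = \Sch = \Sch_{/\Spec\Z}$, regarded as a full subcategory of $\LocRingSpac$, and let $F\colon \Sch \to \Sch$ be reduction modulo $p$, i.e. $F(X) := X\times_{\Spec\Z}\Spec\Fp$, which on affines is $\Spec A \mapsto \Spec(A/pA)$.

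The first step is to check that $F$ is geometrically adhesive, and here the work is essentially the observation that base change commutes with gluings of open subschemes. Open immersions are stable under base change, so each open subobject $V_j \hookrightarrow U$ is sent to an open immersion $FV_j \hookrightarrow FU$; base change preserves the fibre products computing the (open) pairwise and higher intersections, so $F(V_i\cap V_j) = FV_i\cap FV_j$, and likewise for triple intersections, etc.; and $F(\emptyset)=\emptyset$. Consequently the open cover $\{FV_j\}$ of $F\big(\bigcup_j V_j\big)$ has precisely the intersection pattern obtained by applying $F$ to the gluing diagram of the $V_j$, so $F\big(\bigcup_j V_j\big) \cong \bigcup_j FV_j$, which is the defining condition of Definition~\ref{Definition: Geometrically Adhesive}.

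The second step is to locate the failure. Let $i\colon Z := \Spec\Fp \hookrightarrow \Spec\Z$ be the closed point (the closed immersion corresponding to $\Z \twoheadrightarrow \Fp$), and form the amalgamated sum $P := \Spec\Z \amalg_{Z}\Spec\Z$ in $\Sch$ along this closed subobject; this pushout exists, and in the affine case it is $\Spec$ of the fibre product of rings, so $P = \Spec(\Z\times_{\Fp}\Z)$. A one-line computation gives $\Z\times_{\Fp}\Z \cong \Z[u]/(u^{2}-pu)$ via $u \mapsto (p,0)$, hence $F(P) = \Spec\big(\Fp[u]/(u^{2}-pu)\big) = \Spec\big(\Fp[u]/(u^{2})\big)$, the dual numbers over $\Fp$. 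But $F(\Spec\Z)=\Spec\Fp$ and $F(Z)=\Spec(\Fp\otimes_{\Z}\Fp)=\Spec\Fp$, so the pushout of the image diagram is $\Spec\Fp \amalg_{\Spec\Fp}\Spec\Fp = \Spec\Fp$. Since $\Fp[u]/(u^{2})$ has a nonzero nilpotent while $\Fp$ does not, $F(P)\not\cong F(\Spec\Z)\amalg_{F(Z)}F(\Spec\Z)$, so $F$ does not preserve this gluing along a closed subobject; a fortiori $F$ is not cocontinuous.

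The one point that needs care rather than routine verification is the assertion that $P$ exists in $\Sch$ and is $\Spec$ of the fibre product of rings; I would justify this by appealing to the standard results on pushouts of schemes along a closed immersion (Ferrand's gluing theorem, or directly the affine case of amalgamated sums of schemes). It is also worth recording the structural reason the example works, as it is exactly what distinguishes the closed case from the open case: $-\otimes_{\Z}\Fp$ is not left exact, so the short exact sequence $0 \to \Z\times_{\Fp}\Z \to \Z\oplus\Z \to \Fp \to 0$ acquires a $\Tor_{1}^{\Z}(\Fp,\Fp)\cong\Fp$ term on reduction mod $p$, and this extra class is precisely the nilpotent $u$. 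No such homological obstruction can arise for a gluing of \emph{open} subobjects, which is why a geometrically adhesive functor can nonetheless fail on closed gluings.
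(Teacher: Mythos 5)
Your proof is correct, and it proceeds by a genuinely different example than the paper's. The paper builds a small, hand-picked category $\Ccat$ containing $\PP^1_{\Z_p}$ (its only nontrivial open gluing) and the closed gluing $\Spec(\Z_p\times_{\Fp}\Z_p[x])$, and then defines a functor that replaces $\Spec\Fp$ by $\Spec\Z_p$; geometric adhesiveness is then almost vacuous, and failure of cocontinuity is seen because the image of the closed-gluing pushout square has pushout $\Spec\Z_p[x]$ rather than $\Spec(\Z_p\times_{\Fp}\Z_p[x])$. You instead take a single natural functor on all of $\Sch$, reduction mod $p$, prove adhesiveness from the genuinely general fact that open immersions, open covers and their intersections are stable under base change, and detect the failure on the closed gluing $\Spec\Z\amalg_{\Spec\Fp}\Spec\Z \cong \Spec\bigl(\Z[u]/(u^2-pu)\bigr)$ through the appearance of the nilpotent $u$ after tensoring with $\Fp$, with the $\Tor_1^{\Z}(\Fp,\Fp)$ term giving the conceptual reason the closed case differs from the open case. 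Both arguments lean on the same input for the existence and affineness of the closed pushout (Ferrand/Schwede, which the paper also cites), so neither is logically cheaper there; what your version buys is a more natural, globally defined counterexample and an explanation of the mechanism, while the paper's version buys a self-contained finite setup in which adhesiveness requires no base-change theory at all. Two small points to keep tidy: state once that you work in $\Sch_{/\Spec\Z}$ (or $\Sch$) throughout so that the pushout and its image are compared in the same category, and note that you verify the adhesion condition up to canonical isomorphism, which is the reading the paper itself uses when it writes $F\left(\bigcup_i U_i\right)\cong\bigcup_i FU_i$ in Section 3.
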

\begin{proof}
As above, the proof is by example. Consider the ring
\[
A := \lbrace (a,f) \; | \; a \in \Z_p, f \in \Z_p[x], a \equiv f(0) \mod{p} \rbrace = \Z_p \times_{\Fp} \Z_p[x]
\]
and define the set of objects
\[
\Ccat_0 := \lbrace \Spec A, \Spec \Z_p, \Spec \Fp, \A_{\Z_p}^{1}, \mathbb{P}^{1}_{\Z_p}, \Spec \Z_p[x,x^{-1}]\rbrace
\]
with morphism set
\[
\Ccat_1 := \lbrace f \; | \; \exists X, Y \in \Ccat_0.\, f \in \Sch(X,Y) \rbrace
\]
and take $\Ccat$ to be the  category $(\Ccat_0,\Ccat_1)$. Note that the only object with nontrivial open subobjects is $\PP_{\Z_p}^{1}$, which is the gluing of two copies of $\A_{\Z_p}^{1}$ along the isomorphism $\Spec \Z_p[x,x^{-1}] \to \Spec \Z_p[x,x^{-1}]$ given by $x \mapsto x^{-1}$. Furthermore, observe that the object $\Spec A$ is a gluing along a closed subscheme by an argument of \cite{SchwedeClosedGlue}, i.e., $\Spec A$ is the pushout
\[
\xymatrix{
	\Spec \Fp \ar[r]^{s} \ar[d]_{(p,x)} & \Spec \Z_p \ar[d]^{\iota_1} \\
	\A_{\Z_p}^{1} \ar[r]_{\iota_2} & \pushoutcorner \Spec A
}
\]
where the closed immersion $\Spec \Fp \to \Spec \Z_p$ picks out the special fibre and the closed immersion $\Spec \Fp \to \A^1_{\Z_p}$ picks out the closed point $(p,x)$.

Define a functor $F:\Ccat \to \Sch$ as follows: If $X \in \Ccat_0$, define
\[
FX := \begin{cases}
\Spec \Z_p & {\rm if}\, X = \Spec \Fp; \\
X & {\rm else}
\end{cases}
\]
and if $f \in \Ccat_1$,  define
\[
F(f) := \begin{cases}
f & {\rm if}\, \Dom f \ne \Spec \Fp; \\
\id_{\Spec \Z_p}& {\rm if}\, f = \id_{\Spec \Fp}; \\
\phi & {\rm if}\, \Dom f = \Spec \Fp,  f = \phi \circ s, \phi:\Spec \Z_p \to X, X \ne \Spec \Fp;
\end{cases}
\]
this fully defines the functor because for any $X \in \Ccat_0$, if $X \ne \Spec \Fp$ then any map $\Spec \Fp \to X$ factors through the special fibre of $\Spec \Z_p$, i.e., there exists a morphism $g:\Spec \Z_p \to X$ making the diagram
\[
\xymatrix{
\Spec \Fp \ar[rr]^{f} \ar[dr]_{s} & & X \\
 & \Spec \Z_p \ar[ur]_{\phi}	
}
\]
commute in $\Sch$. In particular, with this the verification that $F$ is a functor is trivial and omitted.

The functor $F$ just perserves the gluing of $\PP^1_{\Z_p}$ by construction, and since this is the only nontrivial open subobject gluing, this shows that $F$ is geometrically adhesive. On the other hand, $F$ sends the pushout diagram defining $\Spec A$ to the diagram, where $x:\Spec \Z_p \to \A_{\Z_p}^{1}$ is the spectrum of the map $\Z_p[x] \to \Z_p$ corresponding to evaluation at $x = 0$,
\[
\xymatrix{
	\Spec \Z_p \ar[d]_{x} \ar@{=}[r] & \Spec \Z_p \ar[d]^{\iota_1} \\
	\A_{\Z_p}^{1} \ar[r]_{\iota_2}& \Spec A
}
\]
which is evidently not a pushout in $\Sch$; explicitly, the pushout of $\Spec \Z_p \xleftarrow{=} \Spec \Z_p \xrightarrow{x} \A_{\Z_p}^{1}$ is $\Spec B$, where $B$ is the ring
\[
B = \lbrace (a,f) \; | \; a  \in \Z_p, f \in \Z_p[x], f(0) = a \rbrace = \Z_p \times_{\Z_p} \Z_p[x] \cong \Z_p[x].
\] 
But then $F$ does not preserve the closed gluing of $\Spec A$ and does not preserve the pushout diagram. This proves that geometrically adhesive functors need not be cocontinuous or preserve gluings along closed subobjects.
\end{proof}
\begin{proposition}
Adhesive functors need not be full.
\end{proposition}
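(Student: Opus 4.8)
The plan is to reuse a geometrically adhesive functor already produced in the excerpt rather than build a new one. Take $\Ccat = \Dcat = \Sch$ and let $F \colon \Sch \to \Sch$ be the constant functor $FX := \Spec 0$, sending every morphism to $\id_{\Spec 0}$; here $\Spec 0$ is the empty scheme, which lies in $\ob \Sch$ and is the initial object there. By the Example preceding these propositions, $F$ is geometrically adhesive (a gluing of empty locally ringed spaces along empty intersections is again empty, so $F\bigl(\bigcup_j V_j\bigr) = \Spec 0 = \bigcup_j F V_j$). Thus it suffices to exhibit a single pair of objects on which $F$ fails to be surjective on morphisms.

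To do this I would choose $X, Y \in \ob \Sch$ with $\Hom_{\Sch}(X, Y) = \emptyset$; the most economical choice is $X := \Spec \Q$ and $Y := \Spec \Fp$, since a scheme morphism $\Spec \Q \to \Spec \Fp$ would induce a ring homomorphism $\Fp \to \Q$, of which there are none. On the other hand $FX = FY = \Spec 0$, and because $\Spec 0$ is initial in $\Sch$ we have
\[
\Hom_{\Sch}(FX, FY) = \Hom_{\Sch}(\Spec 0, \Spec 0) = \{\id_{\Spec 0}\},
\]
which is nonempty. Hence the comparison map $F_{X,Y} \colon \Hom_{\Sch}(X, Y) \to \Hom_{\Sch}(FX, FY)$ has empty source and nonempty target, so it is not surjective; therefore $F$ is not full, and a geometrically adhesive functor need not be full.

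I expect no genuine obstacle here: the argument is a one-line counterexample. The only points meriting a sentence of care are that the constant functor really is geometrically adhesive — which is precisely the content of the earlier Example — and that the empty scheme is initial, so its endomorphism monoid is trivial while some hom-sets of $\Sch$ are empty. One could equally run the same argument over any subcategory $\Ccat \subseteq \LocRingSpac$ containing $\Spec 0$ together with two objects admitting no morphism between them (e.g. $\Spec \Q$ and $\Spec \Fp$, as above), or phrase it with the ``affinization'' flavour of counterexample used for the preceding propositions; the constant functor is simply the cheapest witness.
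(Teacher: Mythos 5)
Your argument is correct: the constant functor $FX := \Spec 0$ is geometrically adhesive (this is exactly the paper's preceding Example), and since $\Hom_{\Sch}(\Spec \Q, \Spec \Fp) = \emptyset$ while $\Hom_{\Sch}(\Spec 0, \Spec 0) = \{\id_{\Spec 0}\}$, the comparison map on hom-sets cannot be surjective, so $F$ is not full. The paper proves the same statement with the same logical skeleton (an empty hom-set whose image hom-set is nonempty) but with a much less trivial witness: it extends the Greenberg functor $\hfrak$ to all of $\Sch$ via Witt-vector sheaves and computes that $\Sch(\Spec \Z_p, \Spec \Fp) = \emptyset$ whereas $\FSch(\hfrak \Spec \Z_p, \hfrak \Spec \Fp)$ consists of exactly one morphism $(\tau,\tau^{\sharp})$, built from the unique map of underlying spaces and an explicit description of $W(\CalO_{\Z_p})$ and $W(\CalO_{\Fp})$. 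What the paper's choice buys is the sharper and more relevant fact that the Greenberg functor itself — the central object of the paper — fails to be full when extended to $\Sch \to \FSch$, which also illustrates concretely how $\hfrak$ creates new morphisms between images; what your choice buys is economy: no Witt-vector computation is needed, and the counterexample works in any subcategory of $\LocRingSpac$ containing $\Spec 0$ and two objects with no morphisms between them. Both are complete proofs of the stated proposition.
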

\begin{proof}
Consider the functor $\hfrak$ extended to all schemes $X$ via the same assignment, i.e., via $\hfrak X = (\lvert \hfrak X \rvert, \hfrak\CalO_X)$ where
\[
\lvert \hfrak X \rvert = \lvert X \rvert
\]
and the sheaf $\hfrak \CalO_X$ is defined by, for all $U \subseteq \lvert X \rvert$ open,
\[
\hfrak \CalO_X(U) = W(\CalO_X(U))
\]
where $W$ is the $p$-typical Witt vector functor a l{\'a} Borger; see \cite{Borger1} and \cite{Borger2} for details. A routine calculation (when we identify $\FSch_{/\Spec R}$ as the ind-closure of $\Sch_{/\Spec R}$ and then give all affine schemes the trivial filtration) shows that
\[
\FSch(\Spec \Z_p, \Spec \Fp) \cong \Cring(\Fp,\Z_p) = \emptyset.
\]
However, since there is a unique continuous map $\tau:\lbrace \eta, s \rbrace \to \lbrace \ast \rbrace$, it is the case that we could have a sheaf morphism $\tau^{\sharp}:\hfrak\CalO_{\Fp} \to \tau_{\ast}\hfrak\CalO_{\Z_p}$. Explicitly we note that
\[
\hfrak\CalO_{\Z_p}(U) = W(\CalO_{\Z_p}(U)) = \begin{cases}
\Z_p \oplus \mathfrak{a} & {\rm if}\, U  = \lvert \Spec \Z_p \rvert; \\
\Q_p^{\N} & {\rm if}\, U = \lbrace \eta \rbrace; \\
0 & {\rm if}\, U = \emptyset
\end{cases}
\]
where $\mathfrak{a} = \bigoplus_{n \geq 1} p^n\Z_p$ and the multiplication in $\Z_p \oplus \mathfrak{a}$ is by the rule
\[
(a,x)(b,y) = (ab, ay + bx + xy).
\]
The morphism $\phi:\Z_p \to \Z_p \oplus \mathfrak{a}$ given by $a \mapsto (a,0)$ is  a ring homomorphism. On the other hand we see that for all $U \subseteq \lvert \Spec \Fp \rvert$ open,
\[
\hfrak\CalO_{\Fp}(U) = W(\CalO_{\Fp}(U)) = \begin{cases}
\Z_p & {\rm if}\, U = \lbrace \ast \rbrace; \\
0 & {\rm if}\, U = \emptyset.
\end{cases}
\]
Then we can define a sheaf morphism $\tau^{\sharp}:\hfrak\CalO_{\Fp} \to \tau_{\ast}\hfrak\CalO_{\Z_p}$ because such a morphism of locally ringed spaces only sees global sections and the diagram
\[
\xymatrix{
\Z_p \ar@{-->}[d]_{\exists!} \ar[r]^{\phi} & \Z_p \oplus \mathfrak{a} \ar@{-->}[d]^{\exists!} \\
0 \ar@{-->}[r]_{\exists!} & 0	
}
\]
commutes; moreover, it is not hard to see that this is the only such sheaf map between $\hfrak\CalO_{\Fp}$ and $\tau_{\ast}\hfrak\CalO_{\Z_p}$. Thus we find that
\[
\FSch(\hfrak\Spec \Z_p, \hfrak\Spec \Fp) = \lbrace (\tau,\tau^{\sharp}) \rbrace \ne \emptyset = \Sch(\Spec \Z_p, \Spec \Fp).
\]
This shows that $\hfrak$ is not full when extended to a functor $\hfrak:\Sch \to \FSch$.
\end{proof}

We now move from these definitions and remarks to give some properties of geometrically adhesive functors. In particular we will show that if $U$ is an open subobject of $V$ in $\Ccat$ (that is there is an open immersion $i:U \to V$) and if $F:\Ccat \to \Dcat$ is geometrically adhesive, then $FU$ is an open subobject of $FV$ (more explicitly, the map $Fi:FU \to FV$ is an open immersion). 
\begin{proposition}\label{Prop: Adhesive Functors Preserve Open Immersions}
Let $i:U \to V$ be an open immersion in $\Ccat$ and assume that $F:\Ccat \to \Dcat$ is geometrically adhesive. Then $Fi:FU \to FV$ is an open immersion.
\end{proposition}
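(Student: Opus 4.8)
The plan is to exploit the fact that an open immersion $i\colon U \to V$ is, up to the identification of $U$ with its image, exactly a gluing datum in which $V$ is reconstructed from $U$ and the complementary part: more precisely, $V = U \cup (V \setminus \{x\})$-type decompositions are not quite what we want, so instead I would use the cleaner characterization that $i\colon U \to V$ is an open immersion if and only if the square
\[
\xymatrix{
U \ar[r]^{\id} \ar[d]_{\id} & U \ar[d]^{i} \\
U \ar[r]_{i} & V
}
\]
exhibits $V$ as containing $U$ as one member of the trivial one-element open cover $\{U\}$ of the open subobject $U \subseteq V$. The key point is Definition~\ref{Definition: Geometrically Adhesive}: applied to the open subobject $U$ of $V$ with the (one-element, or if one prefers a two-element) cover by open subobjects of $V$, geometric adhesion forces $F$ to send the inclusion-as-gluing-summand to an inclusion-as-gluing-summand.

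First I would make the reduction precise: given the open immersion $i\colon U\to V$, observe that $U$ is itself an object of $\Ccat_{/V}$ and that the singleton $\{U\}$ is an open cover of the open subobject $U$ of $V$, so $\bigcup_{j\in\{\ast\}} U = U$ trivially. Applying $F$ and geometric adhesion, I get $F(U) = \bigcup_{j\in\{\ast\}} F(U) = F(U)$, which is vacuous — so the one-element cover is too weak. The honest approach is therefore to use a \emph{two}-element cover: write $V$ as glued from $U$ together with a copy of $V$ itself along $U$, i.e.\ use the open cover $\{U, V\}$ of the open subobject $V$ of $V$, where $U \cap V = U$ inside $V$. Geometric adhesion then gives
\[
FV = F(U \cup V) = FU \cup FV,
\]
and the content of this equation in $\LocRingSpac$ (via the pushout/pullback square in the Remark following Definition~\ref{Definition: Geometrically Adhesive}) is precisely that $FU \to FV$ is the inclusion of an open sub-locally-ringed-space: the pushout $FU \coprod_{FU\cap FV} FV$ computed in $\LocRingSpac$ agrees with $FV$ only when $FU$ sits inside $FV$ as an open subspace whose intersection with all of $FV$ is $FU$ itself. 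I would spell out that the underlying-space map $|Fi|\colon|FU|\to|FV|$ is then an open embedding of topological spaces, and that the structure-sheaf comparison $(Fi)^{\sharp}\colon \CalO_{FV} \to (Fi)_{\ast}\CalO_{FU}$ restricts to an isomorphism $\CalO_{FV}|_{|FU|} \xrightarrow{\sim} \CalO_{FU}$, these two facts together being the definition of an open immersion.

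The main obstacle — and the step I would dwell on — is extracting ``$Fi$ is an open immersion'' from the bare equation $FV = FU \cup FV$ of locally ringed spaces. The subtlety is that the symbols $\cup$ and $\cap$ were \emph{defined} in the preceding Remark as a pushout and a pullback in $\LocRingSpac$, so the equation $FV = FU\cup FV$ is really the assertion that the canonical map $FU \coprod_{FU \cap FV} FV \to FV$ is an isomorphism, together with the identification of $FU\cap FV$ as the fibre product $FU \times_{FV} FV \cong FU$. One must check that this forces the leg $FU \to FV$ of the cocone to be an open immersion: this is where I would invoke (or reprove in the restricted generality needed here) the standard fact that in $\LocRingSpac$ a morphism $f\colon A\to B$ such that the square with corners $A, A, B, B$ (with $f$ on the two non-identity edges) is a pushout, and such that $A\times_B A \cong A$, is necessarily an open immersion — intuitively because the pushout topology glues $|B|$ back together out of two pieces one of which is $|A|$ mapping homeomorphically onto an open set. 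I expect the bookkeeping to be routine once the right pushout is identified, but getting the cover right (two elements, not one) and being careful that ``open subobject'' in $\Ccat$ survives to ``open subobject'' in $\Dcat$ under these $\LocRingSpac$-level pushout computations is the crux.
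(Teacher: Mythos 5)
There is a genuine gap, and it sits exactly at the step you said you would dwell on. With the two-element cover $\{U,V\}$ of $V$, the adhesion equation you extract is vacuous: by the paper's own conventions (see the Remark after the definition and the later intersection theorem), $FU\cap FV$ is the pullback of $Fi\colon FU\to FV$ against $\id_{FV}$, hence $FU\cap FV\cong FU$ for \emph{any} morphism $Fi$, and the union $FU\cup FV$ is the pushout $FU\coprod_{FU\cap FV}FV\cong FU\coprod_{FU}FV\cong FV$, again for any morphism $Fi$ whatsoever (pushing out along an isomorphism returns the other leg's codomain). So $FV=FU\cup FV$ holds no matter what $Fi$ is and cannot force it to be an open immersion. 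Correspondingly, the ``standard fact'' you invoke is false as stated: the pushout condition in your square is automatic, and $FU\times_{FV}FU\cong FU$ only says $Fi$ is a monomorphism; monomorphisms of locally ringed spaces need not be open immersions (closed immersions such as $\Spec\Fp\to\Spec\Z_p$ are monic). You correctly noticed that the one-element cover gives nothing, but the two-element cover containing $V$ itself is equally contentless, so the whole derivation of openness collapses; in particular neither the openness of $\lvert Fi\rvert$ nor the sheaf isomorphism $\CalO_{FV}\vert_{\lvert FU\rvert}\cong\CalO_{FU}$ is ever actually established.

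For comparison, the paper's proof has two separate ingredients, and the second is the one your proposal is missing. First, adhesion applied to genuine open coverings of $V$ is used only to conclude that $FU$ is a subobject of $FV$ (underlying-space containment). Second, and crucially, openness is obtained at the level of structure sheaves: since $i$ is an open immersion there is an isomorphism $i^{\flat}\colon i^{-1}\CalO_V\to\CalO_U$; transporting it through the adjunction $i^{-1}\dashv i_{\ast}$, applying $F$ on sheaves, and transporting back through $(Fi)^{-1}\dashv(Fi)_{\ast}$ identifies $F(i^{\flat})$ with $(Fi)^{\flat}\colon (Fi)^{-1}\CalO_{FV}\to\CalO_{FU}$, which is therefore an isomorphism, giving the open immersion. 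Any repair of your argument would need some input of this sheaf-theoretic kind (or at least a cover of $V$ by proper open subobjects whose gluing genuinely reconstructs $V$), not the cover $\{U,V\}$.
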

\begin{proof}
Begin by recalling that the fact that $F$ is geometrically adhesive implies that if $\lbrace V_i \to V \; | \; i \in I \rbrace$ is an open covering of $V$, then
\[
FV = F\left(\bigcup_{i \in I} V_i\right) = \bigcup_{i \in I} FV_i.
\]
Thus it follows that each $FV_i$ remains a subobject of $FV$, and hence we conclude that $\lvert FV^{\prime} \rvert \subseteq \lvert FV \rvert$ whenever $V^{\prime}$ is an open subobject of $V$. Therefore we have that $FU$ is a subobject of $FV$.

Finally, to see that $FU$ is an open subobject of $FV$, we note that since $F$ is a functor between (slice) categories of locally ringed spaces, $F$ takes sheaves to sheaves and hence is at least Zariski continuous. Now consider the map $i:U \to V$ and observe that we can deduce, from the functoriality of $F$ and the pushforward/pullback adjunction $i^{-1} \dashv i_{\ast}:\Shv(V) \to \Shv(U)$, that the following deduction may be made: First, observe that since $i$ is an open immersion, there is an isomorphism of $U$-sheaves $i^{\flat}:i^{-1}\CalO_V \cong \CalO_U$; pushing this through the adjunction gives rise to the sheaf morphism
\[
i^{\sharp}:\CalO_V \to i_{\ast}\CalO_U.
\]
Applying the functor $F$ on sheaves then gives the sheaf morphism
\[
F(i^{\sharp}):F\CalO_V \to (Fi)_{\ast}F\CalO_U
\]
which is equivalent by the functoriality of $F$ to the sheaf morphism
\[
F(i)^{\sharp}:\CalO_{FV} \to F(i)_{\ast}\CalO_{FU}.
\]
Passing this through the adjunction $F(i)^{-1} \dashv F(i)_{\ast}$ sends the morphism $F(i)^{\sharp}$ to  the map
\[
F(i)^{\flat}:F(i)^{-1}\CalO_{FV} \to \CalO_{FU}
\]
which is equivalent, again by the functoriality of $F$, to the map
\[
F(i^{\flat}):(Fi)^{-1}F\CalO_V \to F\CalO_U.
\]
Now, because $i^{\flat}$ is an isomorphism it follows that $F(i^{\flat}) = F(i)^{\flat}$ is as well. Therefore $(Fi)^{-1}\CalO_{FV} \to \CalO_{FU}$ is an isomorphism of $FU$-sheaves, and hence the map $Fi:FU \to FV$ is an open immersion. This completes the proof of the proposition.
\end{proof}

We now proceed to show that geometrically adhesive functors preserve pullbacks defined by open gluings, i.e., that such functors perserve intersections. This will be necessary because it will allow us to show that geometrically adhesive functors preservve pullbacks along covers induced by pretopologies. As we proceed with this, we assume the following:
\begin{enumerate}
	\item The category $\Ccat$ has the property that if $U$ and $V$ are objects of $\Ccat$, then the union space $U \cup V$ and intersection space $U \cap V$ exists in $\Ccat$.
\end{enumerate}
In this way we can regard any two objects as open subobjects of some larger object, and we can view their intersection as an open subspace of $U, V,$ and $U \cup V$.
\begin{Theorem}
If $U$ and $V$ are objects in $\Ccat$ and $F:\Ccat \to \Dcat$ is geometrically adhesive, then
\[
F(U \cap V) = FU \cap FV.
\]
\end{Theorem}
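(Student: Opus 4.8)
The plan is to transport the bicartesian square of the preceding Remark through $F$ and then read off the pullback. Write $W := U \cup V$ and consider the square
\[
\begin{tikzcd}
U \cap V \ar[r] \ar[d] & U \ar[d] \\
V \ar[r] & W
\end{tikzcd}
\]
in $\Ccat$. By the Remark, together with our standing assumption that such unions and intersections exist in $\Ccat$, this square is simultaneously a pushout and a pullback, and all four of its legs are open immersions: the maps $U \to W$ and $V \to W$ are open immersions by the construction of the union, while $U \cap V \to U$ and $U \cap V \to V$ are their base changes, hence open immersions since open immersions of locally ringed spaces are stable under base change.

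First I would apply $F$ to this square. Since $F$ is geometrically adhesive we have $F(W) = F(U \cup V) = FU \cup FV$, which — once the union shorthand of the Remark is unwound — says precisely that $F$ carries the pushout square above to the gluing square with corner $FW$ and legs $F(U \cap V) \to FU$, $F(U \cap V) \to FV$, $FU \to FW$, and $FV \to FW$. By Proposition \ref{Prop: Adhesive Functors Preserve Open Immersions} each of these legs is again an open immersion, because each leg of the original square is, and so $F(U \cap V)$ realizes $FW = FU \cup FV$ as the gluing pushout $FU \coprod_{F(U\cap V)} FV$ along open immersions. Now I would invoke the Remark a second time, this time inside $\Dcat$ (equivalently inside $\LocRingSpac$, since gluing open immersions of locally ringed spaces stays within locally ringed spaces): a gluing square of open immersions is bicartesian, so this pushout square is also a pullback. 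Therefore $F(U \cap V) = FU \times_{FW} FV$; and since $FW = F(U \cup V) = FU \cup FV$, this fibre product is by definition $FU \cap FV$, which is the desired identity.

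The step requiring the most care is the bookkeeping around which (co)limit is being preserved. One must use ``geometrically adhesive'' in the strong form that $F$ genuinely preserves the gluing pushout — not merely that $F(U \cup V)$ is abstractly isomorphic to some union of the images — so that the image square really is a pushout of open immersions; this is exactly the content of Definition \ref{Definition: Geometrically Adhesive} after the union notation is expanded. One must also know that the pushout and pullback computed in $\Dcat$ coincide with those computed in $\LocRingSpac$, which holds because the gluing construction for locally ringed spaces is absolute. With these two points in hand, everything else is formal manipulation of the bicartesian square together with Proposition \ref{Prop: Adhesive Functors Preserve Open Immersions}.
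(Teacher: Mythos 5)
There is a genuine gap, and it sits exactly where you try to discharge it by fiat. You claim that geometric adhesion, ``after the union notation is expanded,'' says that $F$ carries the gluing square with corner $U \cap V$ to a pushout square with corner $F(U \cap V)$, i.e.\ that $FW \cong FU \coprod_{F(U \cap V)} FV$. That is not what Definition \ref{Definition: Geometrically Adhesive} says. Unwinding the shorthand of the Remark, adhesion gives the equality of objects $F(U \cup V) = FU \cup FV$, where the right-hand union is by definition the gluing pushout of $FU$ and $FV$ along \emph{their} intersection $FU \cap FV$, computed in the codomain as the pullback over $FU \cup FV$ --- not along $F(U \cap V)$. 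So the definition hands you $FW = FU \coprod_{FU \cap FV} FV$, and the entire content of the theorem is to bridge the two candidate gluing centers $F(U \cap V)$ and $FU \cap FV$. By asserting that the image square is already a pushout along $F(U \cap V)$, you assume precisely the nontrivial half of the statement; if that were built into the definition, the theorem would be an immediate consequence of bicartesianness and the paper's long argument would be pointless. The paper instead first uses Proposition \ref{Prop: Adhesive Functors Preserve Open Immersions} and the universal property of the pullback to produce an open immersion $\theta: F(U \cap V) \to FU \cap FV$, and then carries out a two-way universal-property comparison showing that $FW$ is simultaneously $FU \coprod_{FU \cap FV} FV$ and $FU \coprod_{F(U \cap V)} FV$, from which the equality of the centers is extracted; that comparison is the work your proposal skips. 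Note also that the paper's Proposition \ref{Proposition: Adhesive need not be cocont} is there precisely to warn that adhesion is a condition on the union \emph{objects} and does not amount to preservation of pushout diagrams.

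A secondary, smaller issue: even granting your step, you pass freely between pushouts/pullbacks in $\Dcat$ and in $\LocRingSpac$ (``the gluing construction is absolute''). Since $\Dcat$ is only assumed to be a (slice of a) subcategory of $\LocRingSpac$, colimits and limits in $\Dcat$ need not be computed as in $\LocRingSpac$, so this needs an argument or a standing hypothesis; the paper sidesteps it by working with the stated universal properties of the specific objects $FU \cup FV$ and $FU \cap FV$ rather than invoking bicartesianness of abstract gluing squares. If you repair the main gap --- for instance by reproducing the paper's argument that the canonical map $F(U\cap V) \to FU \cap FV$ is an open immersion and that both pushouts compute $FW$ --- your bicartesian-square framing becomes a clean way to package the conclusion, but as written the proof begs the question.
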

\begin{proof}
Let $U$ and $V$ be objects of $\Ccat$ and assume that there is are open immersions $U, V \to W$, for some $W \in \ob\Ccat$. However, since $U$ and $V$ are glued along their intersection, it suffices to prove the proposition for the locally ringed space
\[
W := U \coprod_{U \cap V} V = U \cup V,
\]
where the pushout $U \coprod_{U \cap V} V$ is the gluing of $U$ and $V$ along the subspace $U \cap V$; note also that in this definition, since $U$ and $V$ are open in $W$, $U \cap V$ is open and the immersion $U \cap V \to W$ factors as
\[
\xymatrix{
 & V \ar[dr]^{i_{V, W}} & \\
U \cap V \ar[ur]^{i_{U \cap V,V}} \ar[dr]_{i_{U \cap V, U}} \ar[rr] & & W \\
 & U \ar[ur]_{i_{U,W}}	
}
\]
where each of the $i$ morphisms are open immersions.

We will first show that the canonical map $F(U \cap V) \to FU \cap FV$ is an open immersion. Begin with the observations that
\[
FW = F(U \cup V) = FU \cup FV = FU \coprod_{FU \cap FV} FV
\]
and that $U \cap V$ is the pullback
\[
\xymatrix{
U \cap V \pullbackcorner \ar[rr]^-{i_{U \cap V, V}} \ar[d]_{i_{U \cap V, U}} & & V \ar[d]^{i_{V,W}} \\
U \ar[rr]_-{i_{U,W}} & & W	
}
\]
in $\Ccat$. Thus we get that $FU \cap FV$ is the pullback
\[
\xymatrix{
FU \cap FV \pullbackcorner \ar[rr]^-{i_{FU \cap FV, FV}} \ar[d]_{i_{FU \cap FV,FU}} & & FV \ar[d]^{i_{FV,FW}} \\
FU \ar[rr]_-{i_{FU,FW}} & & FW	
}
\]
and so applying $F$ gives rise to a unique morphism $\theta:F(U \cap V) \to FU \cap FV$ making the diagram
\[
\xymatrix{
F(U \cap V) \ar@{-->}[dr]_{\exists!\theta} \ar@/^/[drrr]^{Fi_{U \cap V, V}} \ar@/_/[ddr]_{Fi_{U \cap V, U}} & & & \\
	& FU \cap FV \ar[rr]_-{i_{FU \cap FV, FV}} \ar[d]^{i_{FU \cap FV, FU}} & & FV \ar[d]^{i_{FV,FW}} \\
	& FU \ar[rr]_-{i_{FU,FW}} & & FW
}
\]
commute in $\Dcat_{/Y}$. Moreover, since $F$ preserves open immersions by Proposition \ref{Prop: Adhesive Functors Preserve Open Immersions}, the morphisms $Fi_{U \cap V,V}$ and $Fi_{U \cap V, U}$ are both open immersions in $\Dcat_{/Y}$. Thus, from the equations
\[
i_{FU \cap FV, FU} \circ \theta = Fi_{U \cap V,U}
\]
and
\[
i_{FU \cap FV, FV} \circ \theta = Fi_{U \cap V, V}
\]
it follows that $\theta$ is both monic and open, and hence an open immersion.

We will now show that
\[
FU \coprod_{FU \cap FV} = FU \cup  FV = FW = FU \coprod_{F(U \cap V)} FV,
\]
as this will imply that $F(U \cap V) \cong FU \cap FV$; the fact that $F(U \cap V)$ is an open subobject of $FU \cap FV$ will then complete the proof. To do this assume that there is an object $S$ of $\Dcat_{/Y}$ and that there are morphisms $\phi:FU \to S$ and $\psi:FV \to S$ such that the rectangle
\[
\xymatrix{
FU \ar[rr]^{\phi} 	& & S \\
FU \cap FV \ar[u]^{i_{FU \cap FV, FU}} \ar[rr]_-{i_{FU \cap FV,FV}} & & FV \ar[u]_{\psi}	
}
\]
commutes. Then there exists a unique morphism $\rho:FW \to S$ making the diagram
\[
\xymatrix{
 &	& & S \\
FU \ar@/^/[urrr]^{\phi} \ar[rr]^-{i_{FU,FW}} & & FW \ar@{-->}[ur]^{\exists!\rho} & \\
FU \cap FV \ar[u]^{i_{FU \cap FV, FU}} \ar[rr]_-{FU \cap FV, FV} & & FV \ar[u]^-{i_{FV,FW}} \ar@/_/[uur]_{\psi}
}
\]
commutes. Now observe that since the open immersion 
\[
i_{F(U \cap V), FU \cap FV}:F(U \cap V) \to FU \cap FV
\]
makes the diagram
\[
\xymatrix{
	& & & S \\
FU \ar[rr]^-{i_{FU,FW}} \ar@/^/[urrr]^{\phi} & & FW \ar[ur]^{\rho} & \\
 & FU \cap FV \ar[ul]^{i_{FU \cap FV, FU}} \ar[dr]_{i_{FU \cap FV, FV}} & \\
F(U \cap V) \ar[ur] \ar[uu]^{i_{F(U \cap V),FU}} \ar[rr]_-{i_{F(U \cap V),FV}} & & FV \ar[uu]^-{i_{FV,FW}} \ar@/_/[uuur]_{\psi}
}
\]
commute, where the arrow $F(U \cap V) \to FU \cap FV$ is the open immersion $i_{F(U \cap V),FU \cap FV}$; thus using that
\[
i_{F(U \cap V),FU} = i_{FU \cap FV,FU} \circ i_{F(U \cap V), FU \cap FV}
\]
and
\[
i_{F(U \cap V),FV} = i_{FU \cap FV,FV} \circ i_{F(U \cap V),FU \cap FV}
\]
as open immersions, we have that the diagram
\[
\xymatrix{
	& & & S \\
FU \ar[rr]^-{i_{FU,FW}} \ar@/^/[urrr]^{\phi} & & FW \ar[ur]^{\rho} \\	
F(U \cap V) \ar[u]^{i_{F(U \cap V),FU}} \ar[rr]_-{i_{F(U \cap V),FV}} & & FV \ar@/_/[uur]_{\psi} \ar[u]^{i_{FV,FW}}
}
\]
commutes as well. Thus, to show that $FW = FU \coprod_{F(U \cap V)} FV$ as well, we need only show that $\rho$ is the unique such map making this diagram commute.

With this in mind, assume that there exists a morphism $\sigma:FW \to S$ making the diagram
\[
\xymatrix{
	& & & S \\
	FU \ar[rr]^-{i_{FU,FW}} \ar@/^/[urrr]^{\phi} & & FW \ar[ur]^{\sigma} \\	
	F(U \cap V) \ar[u]^{i_{F(U \cap V),FU}} \ar[rr]_-{i_{F(U \cap V),FV}} & & FV \ar@/_/[uur]_{\psi} \ar[u]^{i_{FV,FW}}	
}
\]
commute as well. But then we derive that the diagram
\[
\xymatrix{	& & & S \\
	FU \ar[rr]^-{i_{FU,FW}} \ar@/^/[urrr]^{\phi} & & FW \ar@<.5ex>[ur]^{\rho} \ar@<-.5ex>[ur]_{\sigma} & \\
	& FU \cap FV \ar[ul]^{i_{FU \cap FV, FU}} \ar[dr]_{i_{FU \cap FV, FV}} & \\
	F(U \cap V) \ar[ur] \ar[uu]^{i_{F(U \cap V),FU}} \ar[rr]_-{i_{F(U \cap V),FV}} & & FV \ar[uu]^-{i_{FV,FW}} \ar@/_/[uuur]_{\psi}
}
\]
commutes with the equations
\[
\sigma \circ i_{FU,FW} = \phi
\]
and
\[
\sigma \circ i_{FV, FW} = \psi
\]
holding by assumption. But then since
\[
\phi \circ i_{FU \cap FV, FU} = \psi \circ i_{FU \cap FV, FV}
\]
it follows that the diagram
\[
\xymatrix{
	& & & S \\
FU \ar@/^/[urrr]^{\phi} \ar[rr]^-{i_{FU,FW}} & & FW \ar@<.5ex>[ur]^{\rho} \ar@<-.5ex>[ur]_{\sigma} \\
FU \cap FV \ar[u]^{i_{FU \cap FV, FU}} \ar[rr]_-{i_{FU \cap FV, FV}} & & FV \ar[u]^{i_{FV,FW}} \ar@/_/[uur]_{\psi}
}
\]
commutes as well. Thus the universal property of $FW$ as the pushout $FU \coprod_{FU \cap FV} FV$ gives that $\rho = \sigma$ and so there is a unique morphism $\rho:FW \to S$ making the diagram
\[
\xymatrix{
 & & & S \\
FU \ar@/^/[urrr]^{\phi} \ar[rr]^-{i_{FU,FW}} & & FW \ar@{-->}[ur]^{\exists!\rho} \\	
F(U \cap V) \ar[u]^{i_{F(U \cap V),FU}} \ar[rr]_-{i_{F(U \cap V),FV}} & & FV \ar[u]^{i_{FV,FW}} \ar@/_/[uur]_{\psi}	
}
\]
commute. This shows that if $P$ is the pushout
\[
P := FU \coprod_{F(U \cap V)} FV
\]
then $FW$ is a subobject of $P$.

To prove that $P$ is a subobject of $FW$, assume that there exist morphisms $\phi^{\prime}:FU \to S^{\prime}$ and $\psi^{\prime}:FV \to S^{\prime}$ making the diagram
\[
\xymatrix{
FU \ar[rr]^-{\phi^{\prime}} &	& S^{\prime} \\
F(U \cap V) \ar[u]^{i_{F(U \cap V),FU}} \ar[rr]_-{i_{F(U \cap V),FV}} & & FV \ar[u]_{\psi^{\prime}} 	
}
\]
commute. Now let $\rho^{\prime}:P \to S$ be the unique map out of the pushout making
\[
\xymatrix{
	& & & S^{\prime} \\
	FU \ar[rr]^-{\iota_{1}} \ar@/^/[urrr]^{\phi^{\prime}} &	& P \ar@{-->}[ur]^{\exists!\rho^{\prime}} \\
	F(U \cap V) \ar[u]^{i_{F(U \cap V),FU}} \ar[rr]_-{i_{F(U \cap V),FV}} & & FV \ar[u]_{\iota_2} \ar@/_/[uur]_{\psi^{\prime}}
}
\]
commute. Factorizing the morphisms $i_{F(U \cap V),FU}$ and $i_{F(U \cap V),FV}$ through $FU \cap FV$ then allows us to produce the commuting diagram:
\[
\xymatrix{	& & & S^{\prime} \\
	FU \ar[rr]^-{\iota_1} \ar@/^/[urrr]^{\phi^{\prime}} & & P \ar[ur]^{\rho^{\prime}}& \\
	& FU \cap FV \ar[ul]^{i_{FU \cap FV, FU}} \ar[dr]_{i_{FU \cap FV, FV}} & \\
	F(U \cap V) \ar[ur] \ar[uu]^{i_{F(U \cap V),FU}} \ar[rr]_-{i_{F(U \cap V),FV}} & & FV \ar[uu]^-{\iota_2} \ar@/_/[uuur]_{\psi^{\prime}}
}
\]
Observe that because $P$ is the pushout  of $FU$ and $FV$ over the open subobject $F(U \cap V)$, $\iota_1$ and $\iota_2$ are both open immersions as well. Furthermore, since $F(U \cap V)$ is an open subobject of $FU \cap FV$ and since the morphisms $i_{FU \cap FV,FU}$ and $i_{FU \cap FV, FV}$ are both open immersions as well, we have that
\[
\iota_{1} \circ i_{FU \cap FV, FU} = \iota_2 \circ i_{FU \cap FV, FV}.
\]
We then compute that
\[
\phi^{\prime}  \circ i_{FU \cap FV, FU} = \rho^{\prime} \circ \iota_1 \circ i_{FU \cap FV, FU} = \rho^{\prime} \circ \iota_2 \circ i_{FU \cap FV} = \psi^{\prime} \circ i_{FU \cap FV, FV}
\]
which shows that the diagram
\[
\xymatrix{
	 & & & S^{\prime} \\
FU \ar[rr]^-{\iota_1} \ar@/^/[urrr]^{\phi^{\prime}} & & P \ar[ur]^{\rho^{\prime}} \\
FU 	\cap FV \ar[u]^{i_{FU \cap FV, FU}} \ar[rr]_-{i_{FU \cap FV, FV}} & & FV \ar[u]_{\iota_2} \ar@/_/[uur]_{\psi^{\prime}}
}
\]
commutes. To see that this does so universally through $\rho^{\prime}$, assume that there exists a $\sigma^{\prime}:P \to S^{\prime}$ such that
\[
\xymatrix{
	& & & S^{\prime} \\
	FU \ar[rr]^-{\iota_1} \ar@/^/[urrr]^{\phi^{\prime}} & & P \ar[ur]^{\sigma^{\prime}} \\
	FU 	\cap FV \ar[u]^{i_{FU \cap FV, FU}} \ar[rr]_-{i_{FU \cap FV, FV}} & & FV \ar[u]_{\iota_2} \ar@/_/[uur]_{\psi^{\prime}}
}
\]
commutes. However, consider now the diagram
\[
\xymatrix{
&	& & & S^{\prime} \\
&	FU \ar[rr]^-{\iota_1} \ar@/^/[urrr]^{\phi^{\prime}} & & P \ar[ur]^{\sigma^{\prime}} \\
&	FU 	\cap FV \ar[u]_{i_{FU \cap FV, FU}} \ar[rr]^-{i_{FU \cap FV, FV}} & & FV \ar[u]_{\iota_2}  	\ar@/_/[uur]_{\psi^{\prime}} \\
	F(U \cap V) \ar[ur] \ar@/_/[urrr]_-{i_{F(U \cap V),FV}} \ar@/^/[uur]^{i_{F(U \cap V),FU}} &
}
\]
and note that this composite implies that
\[
\xymatrix{
	& & & S^{\prime} \\
FU \ar@/^/[urrr]^{\phi^{\prime}} \ar[rr]^-{\iota_1} & & P \ar[ur]^{\sigma^{\prime}} \\
F(U \cap V)	\ar[u]^{i_{F(U \cap V),FU}} \ar[rr]_-{i_{F(U \cap V),FV}} & & FV \ar[u]_{\iota_2} \ar@/_/[uur]_{\psi^{\prime}}
}
\]
commutes. Using the universal property of the pushout $P$ then gives us that $\sigma^{\prime} = \rho^{\prime}$ and hence shows that $P$ is a subobject of $FW$ by a factorization of the universal property.

Now that we have shown that $FW$ is a subobject of $FU \coprod_{F(U \cap V)} FV$ and $FU \coprod_{F(U \cap V)} FV$ is a subobject of $FW$, it follows that
\[
FU \coprod_{FU \cap FV} FV = FU \cup FV = FW = FU \coprod_{F(U \cap V)} FV
\]
and so we derive from this that $FU \cap FV \cong F(U \cap V)$; however, since the map 
\[
i_{F(U \cap V), FU \cap FV}:F(U \cap V) \to FU \cap FV
\]
is an open immersion, it follows that $FU \cap FV = F(U \cap V)$ and we are done.
\end{proof}
\begin{proposition}
Let $R$ be a ring object in $\LocRingSpac$. Then the functor $h_R:\Ccat \to \LocRingSpac$ defined by, for a locally ringed space $X = (\lvert X \rvert, \CalO_X),$
\[
\lvert h_RX \rvert = \lvert X \rvert
\]
and, for opens $U \subseteq \lvert X \rvert$,
\[
\CalO_{h_RX}(U) := \Ccat(U,R)
\]
is geometrically adhesive.
\end{proposition}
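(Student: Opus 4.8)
The plan is to verify the two constituents of $h_R X$ separately, since $h_R$ acts on underlying spaces and on structure sheaves independently. Let $W = \bigcup_{j \in J} V_j$ with each $V_j$ an open subobject of $W$ (or of some common ambient object of $\Ccat$). On underlying spaces there is nothing to do: $\lvert h_R W \rvert = \lvert W \rvert = \bigcup_{j \in J}\lvert V_j \rvert = \bigcup_{j \in J}\lvert h_R V_j\rvert$, which is exactly the underlying space of the glued locally ringed space $\bigcup_{j \in J} h_R V_j$. The only real content is to identify the structure sheaves, and for this I would first record the basic observation that the presheaf $U \mapsto \Ccat(U,R)$ on the opens of any object $X$ of $\Ccat$ is in fact a \emph{sheaf}: a morphism of locally ringed spaces out of $U$ is determined by, and can be glued from, its restrictions to any open cover of $U$ (locality of the ring maps on stalks is checked stalkwise). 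This is precisely the statement that the functor of points $\Ccat(-,R)$ is a Zariski sheaf, and it is what makes $h_R$ well-defined in the first place; I would isolate it as the one external input of the proof.

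Granting this, I would next note that $h_R$ preserves open immersions directly: if $i\colon V \to W$ is an open immersion then $h_R i$ has underlying map the open inclusion $\lvert V\rvert \hookrightarrow \lvert W\rvert$, and for $U \subseteq \lvert V\rvert$ open one has $\big((h_R i)^{-1}\CalO_{h_R W}\big)(U) = \Ccat(U,R) = \CalO_{h_R V}(U)$, so the comparison map is an isomorphism. Hence each $h_R V_j$ is genuinely an open subobject of $h_R W$, and the iterated pushout $\bigcup_{j\in J} h_R V_j$ is formed along the open subobjects $h_R V_j \cap h_R V_{j'}$, which for the same reason are exactly $h_R(V_j \cap V_{j'})$. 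Consequently the structure sheaf of $\bigcup_{j \in J} h_R V_j$ evaluated on an open $U \subseteq \lvert W \rvert$ is the equalizer
\[
\CalO_{\bigcup_j h_R V_j}(U) \;=\; \mathrm{eq}\!\left( \prod_{j} \Ccat(U \cap V_j, R) \;\rightrightarrows\; \prod_{j, j'} \Ccat(U \cap V_j \cap V_{j'}, R)\right),
\]
the two maps being the two families of restrictions. Since $\{\, U \cap V_j \mid j \in J\,\}$ is an open cover of $U$ and $\Ccat(-,R)$ is a sheaf, this equalizer is canonically $\Ccat(U,R) = \CalO_{h_R W}(U)$, and this identification is visibly natural in $U$. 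Thus $\CalO_{h_R W} \cong \CalO_{\bigcup_j h_R V_j}$ as sheaves on $\lvert W\rvert$, compatibly with the structure maps from the $h_R V_j$, so the canonical comparison $\bigcup_{j\in J} h_R V_j \to h_R W$ is an isomorphism, i.e.\ $h_R W = \bigcup_{j \in J} h_R V_j$, which is the desired equality.

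The only place that requires care, and hence the main (if modest) obstacle, is the bookkeeping around the infinite gluing $\bigcup_{j \in J} V_j$: one must check that the locally ringed space obtained as the iterated (equivalently, filtered-colimit) pushout of the $V_j$ along their pairwise intersections really does have structure sheaf given sectionwise by the equalizer displayed above, so that the sheaf condition for $\Ccat(-,R)$ can be applied directly. For finite $J$ this is the definition of gluing locally ringed spaces; for general $J$ it follows by passing to the filtered colimit over finite subsets of $J$ and using that the relevant finite limits and sheafifications are well behaved, but it is worth spelling out. Everything else is formal once the sheaf property of the representable functor $\Ccat(-,R)$ is in hand; in particular no hypothesis on $R$ beyond its being a ring object in $\LocRingSpac$ is used, and no property of $\Ccat$ beyond the standing assumption that the relevant unions and intersections exist in it.
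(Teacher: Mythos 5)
Your proof is correct and follows essentially the same route as the paper, which disposes of the topological side as trivial and cites a ``standard gluing argument'' for the sheaf-theoretic side; your write-up simply makes that argument explicit, with the sheaf property of the representable functor $\Ccat(-,R)$ as the one real input and the equalizer identification of the glued structure sheaf as the bookkeeping step. No gaps beyond the level of informality already present in the paper's own treatment.
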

\begin{proof}
The verification that this satisfies the topological side of the definition is trivial, while the sheaf-theoretic side follows from a standard gluing argument.
\end{proof}
\begin{corollary}\label{Cor: Greenberg functor is adhesive}
If $\Gr:\FSch_{/\Spec R} \to \Sch_{/\Spec k}$ is the Greenberg transform, then the left adjoint $(\mathfrak{h} \dashv \Gr^{R}):\Sch_{/\Spec k} \to \FSch_{/\Spec R}$ is geometrically adhesive.
\end{corollary}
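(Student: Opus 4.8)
The plan is to reduce the statement to the already-established Proposition that $h_R$ is geometrically adhesive for any ring object $R$ in $\LocRingSpac$, together with the explicit description of $\hfrak$ on underlying spaces and structure sheaves given in Section \ref{Section: Intro to adhesion}. Recall that for $X = (\lvert X \rvert, \CalO_X)$ in $\Sch_{/\Spec k}$ we have $\lvert \hfrak X \rvert = \lvert X \rvert$ and $\hfrak \CalO_X = W\CalO_X$, where for an open $U \subseteq \lvert X \rvert$ one has $(W\CalO_X)(U) = W(\CalO_X(U))$, the ring of Witt vectors with coefficients in $\CalO_X(U)$. So the first step is to identify the Witt vector functor $W$ with a representable construction: there is a ring object $\mathbb{W}$ in $\LocRingSpac$ (the ring scheme of Witt vectors, or in the unramified case $\varinjlim W_{n+1}$ glued along the transition maps as in the body of the section) such that $W(A) \cong \Ccat(\Spec A, \mathbb{W})$ naturally in the commutative ring $A$, and more generally $(\hfrak\CalO_X)(U) \cong \Ccat(U, \mathbb{W})$ for $U$ affine, whence for all open $U$ by the sheaf property. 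This exhibits $\hfrak$ as (the relevant slice-categorical restriction of) the functor $h_{\mathbb{W}}$.

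Next I would address the slicing. The Proposition is stated for $h_R : \Ccat \to \LocRingSpac$, whereas the Greenberg functor is $\hfrak : \Sch_{/\Spec k} \to \FSch_{/\Spec R}$. But geometric adhesion is a condition only about open subobjects and their unions, and the forgetful functors $\Sch_{/\Spec k} \to \LocRingSpac$ and $\FSch_{/\Spec R} \to \LocRingSpac$ create open subobjects and preserve the pushout/pullback diagrams defining $U \cup V$ and $U \cap V$ (since an open subobject of a scheme over $\Spec k$ is again a scheme over $\Spec k$, and likewise for formal schemes over $\Spec R$, and the gluing is computed on underlying locally ringed spaces). Hence the computation $\hfrak(\bigcup_j V_j) = \bigcup_j \hfrak V_j$ may be checked after forgetting to $\LocRingSpac$, where it is exactly the content of the Proposition applied to $R = \mathbb{W}$. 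Finally, I would note that the adjunction $(\hfrak \dashv \Gr^R)$ is the one recalled at the start of the section, so the left adjoint of the Greenberg transform is indeed this functor $\hfrak$, and the corollary follows.

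The main obstacle is the first step: one must be careful that the ``$\varinjlim$'' description of $\hfrak$ produces a genuine ring object $\mathbb{W}$ in $\LocRingSpac$ representing $W$, rather than merely a pro-object, and that the identification $(\hfrak\CalO_X)(U) = \Ccat(U, \mathbb{W})$ is natural in $U$ and compatible with restriction maps (the filtered-colimit bookkeeping over the $S_n$, and in the ramified case the termwise products building the Eisenstein structure, is where the real work sits). Once that representability is in hand, the reduction to the Proposition and the handling of the slices are both routine. I would also remark that an entirely self-contained alternative is available: one may simply repeat the proof of the Proposition verbatim, observing that the topological condition $\hfrak(\bigcup_j V_j) = \bigcup_j \hfrak V_j$ is immediate since $\lvert \hfrak(-)\rvert$ is the identity on underlying spaces, and the sheaf-theoretic condition follows from the standard gluing argument for $W\CalO_X$ exactly as in the body of the section.
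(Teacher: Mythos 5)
Your proposal matches the paper's (essentially implicit) argument: the corollary is stated directly after the Proposition on $h_R$, the intended reasoning being precisely that $\hfrak$ is the functor $h_{\mathbb{W}}$ for the Witt-vector ring object, with the underlying-space identity $\lvert\hfrak X\rvert = \lvert X\rvert$ and the standard gluing argument for $W\CalO_X$ already recorded in Section \ref{Section: Intro to adhesion}. Your added care about representability of the colimit description and about the slice categories goes slightly beyond what the paper writes down, but it is the same route, correctly executed.
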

\begin{proposition}
	If $F:\Ccat \to \Dcat$ and $G:\Dcat \to \mathscr{A}$ are geometrically adhesive, then $G \circ F:\Ccat \to \mathscr{A}$ is geometrically adhesive.
\end{proposition}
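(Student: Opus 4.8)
The plan is to reduce the claim to the defining property of geometric adhesiveness applied successively to $F$ and then to $G$, with the only real input being that geometrically adhesive functors preserve open immersions (Proposition \ref{Prop: Adhesive Functors Preserve Open Immersions}). Concretely, suppose $U = (\lvert U \rvert, \CalO_U)$ is an object of $\Ccat_{/X}$ equipped with a family of open subobjects $\lbrace V_j \; | \; j \in J \rbrace$ satisfying $\bigcup_{j \in J} V_j = V$. I want to show that $(G \circ F)\left(\bigcup_{j \in J} V_j\right) = \bigcup_{j \in J} (G \circ F)V_j$.

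First I would apply the hypothesis that $F$ is geometrically adhesive to the family $\lbrace V_j \rbrace$, which yields $F\left(\bigcup_{j \in J} V_j\right) = \bigcup_{j \in J} FV_j$; in particular $FV := F\left(\bigcup_{j\in J} V_j\right)$ is the union in $\Dcat$ of the objects $FV_j$. The key intermediate observation, and the step I expect to require the most care to state correctly, is that the family $\lbrace FV_j \; | \; j \in J \rbrace$ is genuinely a family of \emph{open} subobjects of $FV$: since each inclusion $V_j \hookrightarrow V$ is an open immersion in $\Ccat$, Proposition \ref{Prop: Adhesive Functors Preserve Open Immersions} guarantees that each $FV_j \to FV$ is an open immersion in $\Dcat$. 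Without this, the defining property of geometric adhesiveness for $G$ would not directly apply to the family $\lbrace FV_j \rbrace$, so this is really the crux of the argument (everything else is bookkeeping).

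Having established that $\lbrace FV_j \rbrace$ is a family of open subobjects of $FV$ with $\bigcup_{j \in J} FV_j = FV$, I would then apply the hypothesis that $G$ is geometrically adhesive to this family, obtaining $G\left(\bigcup_{j \in J} FV_j\right) = \bigcup_{j \in J} G(FV_j)$. Chaining the two equalities gives
\[
(G \circ F)\left(\bigcup_{j \in J} V_j\right) = G\left(F\left(\bigcup_{j \in J} V_j\right)\right) = G\left(\bigcup_{j \in J} FV_j\right) = \bigcup_{j \in J} G(FV_j) = \bigcup_{j \in J} (G \circ F)V_j,
\]
which is exactly the required identity, so $G \circ F$ is geometrically adhesive. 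I would close by remarking that $G \circ F : \Ccat \to \mathscr{A}$ is again a functor between (slice categories of) subcategories of $\LocRingSpac$, so the notion of geometric adhesiveness makes sense for it and no further hypotheses are needed.
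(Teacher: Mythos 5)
Your proof is correct and follows essentially the same route as the paper, which simply chains the equalities $(G \circ F)\left(\bigcup_j V_j\right) = G\left(\bigcup_j FV_j\right) = \bigcup_j (G\circ F)V_j$. Your additional appeal to Proposition \ref{Prop: Adhesive Functors Preserve Open Immersions} to justify that $\lbrace FV_j \rbrace$ is indeed a family of \emph{open} subobjects of $FV$ is a point the paper leaves implicit, and including it only makes the argument more complete.
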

\begin{proof}
	Immediate from the calculation
	\begin{align*}
	(G \circ F)\left(\bigcup_{i \in I}U_i\right) = G\left(F\left(\bigcup_{i \in I}U_i\right)\right) = G\left(\bigcup_{i \in I}FU_i\right) = \bigcup_{i \in I} G(FU_i) = \bigcup_{i \in I} (G \circ F)(U_i).
	\end{align*}
\end{proof}

\section{The Adhesive Site}
From the definition of geometrically adhesive functors and Proposition \ref{Prop: Adhesive Functors Preserve Open Immersions}, we see that geometrically adhesive functors have good behaviour with respect to open gluings of schemes and open immersions of schemes. We would now like to define a topology on the codomain of the functor $F:\Ccat \to \Dcat$ so that the gluing condition
\[
F\left(\bigcup_{i \in I} U_i\right) \cong \bigcup_{i \in I} FU_i
\]
becomes instead an intrinsic property of the functor preserving coverages that are allowed to occur with respect to whatever topology on $\Ccat$ that we have at hand. In particular, we will show that under the construction of this topology, the Greenberg functor $\hfrak$ is site-cocontinuous (cf.\@ Corollary \ref{Corollary: The Greenberg Functor is site coconinuous}).

As we proceed, we recall briefly the notion of a sieve on a category. A sieve $S$ on an object $U$ of $\Ccat$ is a subfunctor of the representable functor on $U$, i.e., a monomorphism $S \to \Ccat(-,U)$ in the presheaf topos $[\Ccat^{\op},\Set]$. Results from classical Grothendieck topos theory (cf.\@ \cite{sga1}) show that a Grothendieck topology is completely determined by its covering sieves in a nonambiguous way. Thus we will work with sieves on the highest level possible, and specialize to working with coverages in the sense of Grothendieck pretopologies when we need to work with explicit covers.

Begin by letting $C := \lbrace \phi_i:U_i \to U \; | \; i \in I \rbrace$ be a set of covering morphisms on an object $U$ in $\Ccat$. We then define the {\em sieve generated by $C$} to be the functor $(C):\Ccat^{\op} \to \Set$ where first we define a set
\[
(C) := \lbrace \phi_i \circ \psi \; | \; \phi_i \in C, \Dom \phi_i = \Codom \psi \rbrace
\]
and then defining the action of $(C)$ on any object $X$ of $\Ccat$ via
\[
(C)(X) := \lbrace \theta \in (C) \; | \; \Dom \theta = X \rbrace,
\]
while the action of $(C)$ on morphisms is simply by precomposition. It is readily checked that $(C)$ defines a sieve on $U$. In a similar vein, if $\mathscr{C}$ is a collection of sieves we wish to make into covering sieves on a category, we write
\[
\langle C \; | \; C \in \mathscr{C} \rangle
\]
for the Grothendieck topology generated by the sieves $C$ in $\mathscr{C}$, i.e., for the minimal Grothendieck topology $J$ on $\Ccat$ which contains all the sieves $C$ as covering sieves. We will use both of these notions to define a topology on $\Dcat$ in terms of a given topology on $\Ccat$.

\begin{definition}\label{Definition: Adhesive Site}
	Let $J$ be a fixed Grothendieck topology on a category of locally ringed spaces $\Ccat$ and let $F:\Ccat \to \Dcat$ be an geometrically adhesive functor where $\Dcat$ is also a category of locally ringed spaces. We then generate a topology $\AFJ$ on $\Dcat$ by taking
	\[
	\AFJ := \langle (FC) \; | \; C\, {\rm is\, a\,} J-{\rm covering\, sieve}\rangle
	\]
	where, if $C$ is a sieve on $U \in \ob\Ccat$,
	\[
	FC := \lbrace F\phi:FV \to FU \; | \; \phi \in C \rbrace.
	\]
	The topology $\AFJ$ is then called the {\em $J$-$F$-geometrically adhesive site on $\Dcat$}.
\end{definition} 
\begin{remark}
	If the functor $F$ and the site $J$ are clear from context, we will simply refer to the site $\mathcal{A}_F^J$ as the {\em geometric adhesive site} on $\Dcat$ instead of the {\em $F$-$J$-geometric adhesive site} on $\Dcat$.
\end{remark}
\begin{lemma}\label{Lemma: geometric adhesive site is cover reflecting}
If $F:\Ccat \to \Dcat$ is a geometrically adhesive functor and if $(\Ccat,J)$ is a site, then the functor $F:(\Ccat,J) \to (\Dcat,\AFJ)$ is cover reflecting.
\end{lemma}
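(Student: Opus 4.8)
The plan is to verify directly the defining property of a cover‑reflecting (equivalently, cocontinuous) functor: for every object $U$ of $\Ccat$ and every $\AFJ$‑covering sieve $S$ on $FU$, the sieve $F^{-1}(S):=\lbrace\,\phi\colon V\to U \mid F\phi\in S\,\rbrace$ on $U$ is $J$‑covering. Since $\AFJ$ is, by Definition \ref{Definition: Adhesive Site}, the least Grothendieck topology on $\Dcat$ for which every sieve $(FC)$ with $C$ a $J$‑covering sieve is covering, it suffices to exhibit a collection $\mathcal{T}$ of sieves on the objects of $\Dcat$ which (a) contains every maximal sieve and every sieve of the form $(FC)$, (b) is stable under pullback along arbitrary morphisms of $\Dcat$, (c) satisfies the local‑character axiom, and such that (d) $F^{-1}(S)$ is $J$‑covering whenever $S\in\mathcal{T}$ lies over an object in the image of $F$. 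Conditions (a)--(c) make $\mathcal{T}$ a Grothendieck topology containing the generators of $\AFJ$, hence $\AFJ\subseteq\mathcal{T}$, and then (d) is exactly cover‑reflection. The candidate that renders (d) automatic, by taking the morphism below to be an identity, is
\[
\mathcal{T}:=\bigl\lbrace\, S \text{ a sieve on } D\in\ob\Dcat \;\mid\; F^{-1}(g^{\ast}S)\text{ is }J\text{-covering on }U\ \text{for all } U\in\ob\Ccat,\ g\in\Dcat(FU,D)\,\bigr\rbrace.
\]

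Verifying (b), (c) and the maximal‑sieve part of (a) is routine sieve‑chasing in the presheaf toposes $[\Ccat^{\op},\Set]$ and $[\Dcat^{\op},\Set]$. Stability (b) follows from $g^{\ast}(h^{\ast}S)=(h\circ g)^{\ast}S$, and $\mathcal{T}$ contains the maximal sieves because $F^{-1}$ of a maximal sieve contains $\id_U$. For local character (c) the key computation is the identity $F^{-1}\!\bigl((g\circ F\phi)^{\ast}S\bigr)=\phi^{\ast}\bigl(F^{-1}(g^{\ast}S)\bigr)$, valid for $g\colon FU\to D$ and $\phi\colon V\to U$: given $R\in\mathcal{T}$ over $D$ with $h^{\ast}S\in\mathcal{T}$ for all $h$ in $R$, and any $g\colon FU\to D$, the sieve $F^{-1}(g^{\ast}R)$ is $J$‑covering on $U$; for each member $\phi$ of it, $g\circ F\phi$ lies in $R$, so $F^{-1}\!\bigl((g\circ F\phi)^{\ast}S\bigr)$ is $J$‑covering, i.e.\ $\phi^{\ast}\bigl(F^{-1}(g^{\ast}S)\bigr)$ is $J$‑covering; the local‑character axiom of the Grothendieck topology $J$ on $\Ccat$ then forces $F^{-1}(g^{\ast}S)$ to be $J$‑covering on $U$, i.e.\ $S\in\mathcal{T}$.

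The remaining point --- that each generating sieve $(FC)$ belongs to $\mathcal{T}$ --- is the one I expect to be the main obstacle. For $C$ a $J$‑covering sieve on $U_0$ and every $g\colon FU\to FU_0$ in $\Dcat$, one must show $F^{-1}\!\bigl(g^{\ast}(FC)\bigr)$ is $J$‑covering on $U$. If $g=F\gamma$ comes from a morphism of $\Ccat$ this is immediate: $\gamma^{\ast}C\subseteq F^{-1}\!\bigl((F\gamma)^{\ast}(FC)\bigr)$ because $F(\gamma\circ\phi)=F\gamma\circ F\phi$, and $\gamma^{\ast}C$ is $J$‑covering by stability of $J$, so the larger sieve is too. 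The real work is a morphism $g$ of $\Dcat$ not visibly of the form $F\gamma$; here one essentially uses geometric adhesiveness: that $F$ preserves open immersions (Proposition \ref{Prop: Adhesive Functors Preserve Open Immersions}) and intersections (the Theorem that $F(U\cap V)=FU\cap FV$ above), together with the standing assumption that $\Ccat$ has all binary unions and intersections, to write $FU$ as the gluing of the open subobjects $FV$ with $V$ ranging over the opens of $U$, to analyse $g$ on these pieces so that its restrictions are governed by morphisms of $\Ccat$, to reduce to the previous case, and to reassemble the resulting $J$‑covers of the pieces into a $J$‑cover of $U$ via the local‑character axiom of $J$. Making this localisation precise is the technical heart of the argument; the rest is bookkeeping with sieves.
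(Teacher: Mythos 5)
Your general framework is sound and in fact more careful than what the paper records: the collection $\mathcal{T}$ you define is indeed a Grothendieck topology (your verification of stability via $g^{\ast}(h^{\ast}S)=(h\circ g)^{\ast}S$ and of local character via the identity $F^{-1}\bigl((g\circ F\phi)^{\ast}S\bigr)=\phi^{\ast}\bigl(F^{-1}(g^{\ast}S)\bigr)$ is correct), and if every generating sieve $(FC)$ belonged to $\mathcal{T}$ then $\AFJ\subseteq\mathcal{T}$ and cover-reflection would follow at $g=\id_{FU}$. But the step you defer --- that $F^{-1}\bigl(g^{\ast}(FC)\bigr)$ is $J$-covering for an \emph{arbitrary} $\Dcat$-morphism $g\colon FU\to FU_0$, not just for $g=F\gamma$ --- is not a routine localisation to be filled in later; it is essentially the whole content of the lemma, and the sketch you give for it does not go through as described. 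Geometric adhesiveness (Definition \ref{Definition: Geometrically Adhesive}, Proposition \ref{Prop: Adhesive Functors Preserve Open Immersions}) only controls images of open subobjects and their unions and intersections; it gives no handle on a morphism $g\colon FU\to FU_0$ of $\Dcat$ that is not in the image of $F$, and no reason that the restrictions of such a $g$ to pieces $FV$ of an open cover of $FU$ are ``governed by morphisms of $\Ccat$'' --- the paper itself shows that $\hfrak$ need not be full, so such $g$ genuinely occur and cannot be traded for $\Ccat$-data by passing to opens. Note also that you cannot sidestep this by requiring the defining condition of $\mathcal{T}$ only at $g=\id$ (which is all cover-reflection literally asks): stability of $\mathcal{T}$ would then fail, which is exactly why the quantifier over all $g$ is forced and why pullbacks of $(FC)$ along non-image morphisms are unavoidable $\AFJ$-covering sieves of $FU$.

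For comparison, the paper's own proof is a one-line appeal to minimal generation: it asserts that a sieve on $FU$ is $\AFJ$-covering if and only if it contains $FR$ for some $R\in J(U)$, which silently assumes that the saturation process (pullback of generators along arbitrary $\Dcat$-morphisms between image objects, and local-character gluings) produces nothing new over objects of the form $FU$. That is precisely the assertion your argument isolates and leaves unproved. So you have correctly located where the real work lies, but as written the proposal is incomplete: either the missing case must actually be established (for instance under extra hypotheses ensuring that every $g\colon FU\to FU_0$ pulls images of $J$-covers back to images of $J$-covers), or one needs a sharper description of the $\AFJ$-covering sieves on image objects than bare minimal generation provides.
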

\begin{proof}
This is immediate from construction. Since the topology $\AFJ$ is minimally generated by $FJ$, a subfunctor $S \to \Dcat(-,FU)$ is an $\AFJ$-covering sieve on $FU$ if and only if there exists an $R \in J(U)$ such that $FR \subseteq S$. However, this is what it means to  be cover reflecting.
\end{proof}
\begin{corollary}\label{Corollary: The Greenberg Functor is site coconinuous}
The Greenberg functor $\hfrak:\Sch_{/\Spec k} \to \FSch_{/\Spec R}$ is site-cocontinuous.
\end{corollary}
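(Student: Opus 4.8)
The plan is to obtain this as a direct instance of Lemma~\ref{Lemma: geometric adhesive site is cover reflecting}. First I would recall that a functor of sites $F\colon(\Ccat,J)\to(\Dcat,K)$ is \emph{site-cocontinuous} exactly when it is cover-reflecting, i.e.\ when for every object $U$ of $\Ccat$ and every $K$-covering sieve $S$ on $FU$ the induced sieve $F^{\ast}S=\lbrace f\colon V\to U \; | \; Ff\in S\rbrace$ is a $J$-covering sieve on $U$ (cf.\@ the discussion of cocontinuous functors in \cite{sga1}). Since a sieve containing a covering sieve is itself covering, this condition is implied by the statement that a sieve $S$ on $FU$ is $K$-covering if and only if there is some $R\in J(U)$ with $FR\subseteq S$; and in the presence of the minimality clause built into Definition~\ref{Definition: Adhesive Site} the two are equivalent. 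But that characterization of $\AFJ$-covering sieves on objects in the image of $F$ is precisely what Lemma~\ref{Lemma: geometric adhesive site is cover reflecting} provides.

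Next I would verify the hypotheses of that lemma in the case at hand. By Corollary~\ref{Cor: Greenberg functor is adhesive}, the Greenberg functor $\hfrak\colon\Sch_{/\Spec k}\to\FSch_{/\Spec R}$ is geometrically adhesive, and both its source and its target are (slice categories of sub)categories of $\LocRingSpac$, so Definition~\ref{Definition: Adhesive Site} applies. Fixing on $\Sch_{/\Spec k}$ a Grothendieck topology $J$ — the {\'e}tale or fppf topology, say — so that $(\Sch_{/\Spec k},J)$ is a site, Definition~\ref{Definition: Adhesive Site} with $F=\hfrak$ produces the geometrically adhesive site $\AFJ$ on $\FSch_{/\Spec R}$, which is exactly the topology $K$ referred to in the introduction. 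Lemma~\ref{Lemma: geometric adhesive site is cover reflecting}, applied with $\Ccat=\Sch_{/\Spec k}$, $\Dcat=\FSch_{/\Spec R}$ and $F=\hfrak$, then says that $\hfrak\colon(\Sch_{/\Spec k},J)\to(\FSch_{/\Spec R},\AFJ)$ is cover-reflecting, hence site-cocontinuous, which is the assertion.

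I do not expect a genuine obstacle here: the mathematical content has already been discharged in Lemma~\ref{Lemma: geometric adhesive site is cover reflecting} and Corollary~\ref{Cor: Greenberg functor is adhesive}, and what remains is bookkeeping — recording the identification of ``site-cocontinuous'' with ``cover-reflecting'' and noting that the codomain topology witnessing cocontinuity is the one named $K$. The only point worth stating with care, and the only place a sloppy reading could go astray, is that cover-reflection is a property \emph{relative to a chosen topology on the codomain}, so the corollary must be understood as asserting the existence of such a topology on $\FSch_{/\Spec R}$ — namely $\AFJ$ — rather than cocontinuity with respect to an arbitrary topology.
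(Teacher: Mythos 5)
Your proposal is correct and follows the same route as the paper: the paper's proof likewise invokes the equivalence of ``site-cocontinuous'' with ``cover-reflecting'' (citing Johnstone) and applies Lemma \ref{Lemma: geometric adhesive site is cover reflecting} together with Corollary \ref{Cor: Greenberg functor is adhesive} to the topology $\AFJ$ on $\FSch_{/\Spec R}$. You simply spell out the bookkeeping (and the point that cocontinuity is relative to the chosen codomain topology $\AFJ$) more explicitly than the paper does.
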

\begin{proof}
From the discussion around the definition of what it means to be cover reflecting in \cite{JohnstoneSketches2}, a functor of sites is cocontinuous  if and only if it is cover reflecting.
\end{proof}

The cover reflecting property of the $\AFJ$ topology is a very practical one, as it will allow us to prove an intuitive result: It allows us to show that if a pretopology $\tau$ generates the topology $J$ on $\Ccat$, then $F\tau$ generates $\AFJ$. However, we cannot in general say that there is a pretopology generated by $F\tau$ on $\Dcat$, as $\Dcat$ need not admit pullbacks.
\begin{proposition}\label{Prop: Generated pretop generates AFJ top}
Let $F:\Ccat \to \Dcat$ be geometrically adhesive and assume that the site $(\Ccat,J)$ is generated by the pretopology $\tau$ and that $\Dcat$ admits pullbacks. Then the pretopology $\tau^{\prime} = \langle F\tau\rangle $ on $\Dcat$ generates $\AFJ$.
\end{proposition}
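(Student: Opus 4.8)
The plan is to show that the pretopology $\tau' = \langle F\tau\rangle$ on $\Dcat$ and the topology $\AFJ$ determine the same covering sieves, by unwinding both definitions down to the level of the generating covers $\tau$ on $\Ccat$. First I would recall that, by definition, a sieve $T$ on an object $D$ of $\Dcat$ is $\tau'$-covering if and only if there is a $\tau'$-cover $\{g_a : D_a \to D\}$ (i.e. a cover in $\langle F\tau\rangle$) whose generated sieve is contained in $T$; and, by Definition \ref{Definition: Adhesive Site}, $T$ is $\AFJ$-covering if and only if there is a $J$-covering sieve $C$ on some object of $\Ccat$ with $D = FU$ and $(FC) \subseteq T$. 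The two containments to prove are therefore: (i) every sieve of the form $(FC)$ for $C \in J(U)$ is $\tau'$-covering, and (ii) every $\tau'$-cover generates an $\AFJ$-covering sieve. By the characterization of Lemma \ref{Lemma: geometric adhesive site is cover reflecting} (and the fact, used there, that $\AFJ$ is minimally generated by $FJ$), it suffices to match up covers on objects of the form $FU$; this is where geometric adhesiveness enters, since by Proposition \ref{Prop: Adhesive Functors Preserve Open Immersions} and Theorem (the intersection-preservation theorem) $F$ sends the open subobjects and intersections defining the pretopology covers to the corresponding data in $\Dcat$, so that $F$ of a $\tau$-cover is literally a $\langle F\tau\rangle$-cover.

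For direction (i): given $C \in J(U)$, since $J$ is generated by the pretopology $\tau$ there is a $\tau$-cover $\{\phi_i : U_i \to U\}$ whose generated sieve $(\{\phi_i\})$ is contained in $C$. Applying $F$ and using functoriality, the generated sieve $(F\{\phi_i\}) = (\{F\phi_i\})$ is contained in $(FC)$. But $\{F\phi_i : FU_i \to FU\}$ is by construction a cover in $\langle F\tau\rangle = \tau'$, so its generated sieve is $\tau'$-covering, and hence so is the larger sieve $(FC)$ (a Grothendieck topology is closed under passing to larger sieves). This shows every $\AFJ$-generating sieve is $\tau'$-covering, hence $\AFJ \subseteq \tau'$ as topologies (i.e. every $\AFJ$-cover is a $\tau'$-cover).

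For direction (ii): I would argue that $\tau' = \langle F\tau\rangle$ is \emph{contained} in $\AFJ$, so that the two topologies coincide. It is enough to check that each \emph{generating} $\tau'$-cover — that is, each cover of the form $\{F\phi_i : FU_i \to FU\}$ coming from a $\tau$-cover $\{\phi_i : U_i \to U\}$ of $\Ccat$ — generates an $\AFJ$-covering sieve; closure properties of the topology $\AFJ$ then propagate this to all of $\langle F\tau\rangle$. But $\{\phi_i\}$ being a $\tau$-cover means the sieve $C := (\{\phi_i\})$ it generates lies in $J(U)$, and then $(FC) = (\{F\phi_i\})$ is precisely an $\AFJ$-generating sieve on $FU$ by Definition \ref{Definition: Adhesive Site}; hence the sieve generated by $\{F\phi_i\}$ is $\AFJ$-covering. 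Combining (i) and (ii) gives $\tau' = \AFJ$.

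\textbf{Main obstacle.} The delicate point is direction (ii): one must be careful that ``generating'' $\tau'$-covers really do exhaust $\langle F\tau\rangle$ in a way compatible with sieve generation in $\Dcat$, and in particular that pullbacks of $F\tau$-covers along arbitrary morphisms of $\Dcat$ (which exist by hypothesis, since $\Dcat$ admits pullbacks) still land inside $\AFJ$. This requires knowing that $F$ interacts well enough with those pullbacks — precisely the content guaranteed by geometric adhesiveness via Proposition \ref{Prop: Adhesive Functors Preserve Open Immersions} and the intersection-preservation theorem for the open-immersion covers that a Zariski-type pretopology is built from — together with the standard stability axioms of the pretopology $\tau$. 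Managing this bookkeeping, rather than any single hard inequality, is the crux; the rest is a routine comparison of generated sieves.
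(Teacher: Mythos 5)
Your argument is correct, and in substance it rests on the same observation as the paper's proof: every $\AFJ$-covering sieve on an object $FU$ contains the $F$-image of a $\tau$-cover, because $J$ is generated by $\tau$. The organization differs, though. You argue at the level of generating sieves and invoke minimality twice: each generator $(FC)$ of $\AFJ$ contains the sieve generated by $F$ of a $\tau$-cover inside $C$, giving $\AFJ\subseteq\langle\tau^{\prime}\rangle$; and each $F\tau$-cover generates an $\AFJ$-covering sieve by Definition \ref{Definition: Adhesive Site}, with the closure axioms of $\AFJ$ absorbing the rest of the pretopology $\langle F\tau\rangle$, giving the reverse inclusion. The paper instead takes an arbitrary covering sieve $R\in\AFJ(FU)$, picks a cover in a maximal pretopology $\rho$ generating $\AFJ$ inside $R$, and refines it via the cover-reflecting property (Lemma \ref{Lemma: geometric adhesive site is cover reflecting}) to a family of the form $F\lbrace\phi_j\rbrace$ with $\lbrace\phi_j\rbrace\in\tau(U)$; it also disposes separately of objects $V$ admitting no morphisms to or from the image of $F$, a case your minimality argument handles implicitly. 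Your route is the more elementary and arguably cleaner one, and it makes explicit the converse inclusion that the paper leaves tacit. One correction to your ``main obstacle'' paragraph: the fact that pullbacks of $F\tau$-covers still generate $\AFJ$-covering sieves does not require geometric adhesiveness, Proposition \ref{Prop: Adhesive Functors Preserve Open Immersions}, or the intersection-preservation theorem; it is exactly the stability axiom of the Grothendieck topology $\AFJ$ combined with the hypothesis that $\Dcat$ admits pullbacks (the pullback of the generated sieve along $h$ coincides with the sieve generated by the pulled-back cover). Adhesiveness and intersection-preservation enter only later, when fibre products of covers are rewritten as intersections $FU_i\cap FU_j$, as in the sheaf-condition lemmas.
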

\begin{proof}
Observe that if $\Dcat(FU,V) = \emptyset = \Dcat(V,FU)$, then there is nothing to say, as the only covering sieves of $V$ are trivial. Moreover, by the Cover Reflecting Property it suffices to show that if  $R \in \AFJ(FU)$ is a covering sieve, then there exists a cover $C \in J(U)$ for which  $FC \subseteq R$, as we can refine along covers formed from $FU_i$'s to collections that come from functorial images of $J$-covers through the functor $F$.

To prove this let $\rho$ be the maximal pretopology on $\Dcat$ which generates $\AFJ$ and let $\lbrace \psi_i:V_i \to FU \; | \; i \in I \rbrace \in \rho(FU)$ be given such that
\[
\lbrace V_i \to FU \; | \; i \in I \rbrace \subseteq R;
\]
such a cover exists because $\rho$ generates $\AFJ$. Now, since $\AFJ$ is cover reflecting, there exists a collection of morphisms $\lbrace F\phi_j:FU_j \to FU \; | \; j \in J \rbrace$ which refines $\lbrace \psi_i:V_i \to FU \; | \; i \in I \rbrace$. However, we also have that
\[
\lbrace F\phi_j:FU_j \to FU \; | \; j \in J \rbrace = F\lbrace\phi_j:U_j \to U \; | \; j \in J\rbrace
\]
and $C := \lbrace \phi_j:U_j \to U \; | \; j \in J \rbrace \in \tau(U)$. But then it follows that  if $S$ is the $J$-covering sieve generated by this cover, $(FS) \subseteq R$. Since the cover $\langle FC \rangle \in \tau^{\prime}(FU)$, we get that $\langle FC \rangle \subseteq (FS) \subseteq R$ and so it follows that $\tau^{\prime}$ generates $\AFJ$.
\end{proof}
\begin{remark}
Note that the above proof does not show that the pretopology $\sigma = \langle F\tau \rangle$ is the {\em maximal} pretopology generating $\AFJ$. I do not think that this will happen, in general, but this remains open.
\end{remark}

Many of the results we will now give are presented in this section for the purpose of having tools to compute  the fundamental groups of the sheaf toposes $\Shv(\Dcat,\AFJ)$ so that we can have comparisons of the nature, for locally ringed spaces $U \in \ob\Ccat$ and a geometrically adhesive functor $F:\Ccat \to \Dcat$,
\[
\pi_1^{J}(U) \overset{?}\to \pi_{1}^{\AFJ}(FU), \qquad \pi_1^{\AFJ}(FU) \overset{?}\to \pi_1^{J}(U)
\]
at least in the case in which both $U$ and $FU$ are connected. We will use these in particular to study the adjunction $\mathfrak{h} \dashv \Gr^{R}:\Sch_{/\Spec k} \to \FSch_{/\Spec R}$ and to hopefully understand why, at least in the case in which $J$ is the {\'e}tale topology on $\Sch_{/\Spec k}$, passing through the Greenberg Transform allows us to geometrize quasi-characters and see things over $R$ that simply do not come from the {\'e}tale site over $\Spec R$; for details see \cite{CunningRoe} on the motivation for this idea.

In order to discuss how to move these results over, we need to prove two key lemmas which will allow us to reduce proving that a presheaf $P:\Dcat^{\op} \to \Set$ is an $\AFJ$-sheaf to checking that it satisfies the sheaf axiom on images of $J$-covers. This will then allow us to prove Lemma \ref{Lemma: Pullbacck functor preservves sheaves} below, which itself is essential for the proof of Theorem \ref{Theorem: The pullback/pushforward adjunction}. 

As we proceed, we will assume the following: The categories $\Ccat$ and $\Dcat$ have pullbacks, and if $J$ is a topology on $\Ccat$ generated by pretopology $\tau$, then $\rho = \langle F\tau\rangle$ is the pretopology on $\Dcat$ generated by $\tau$. Note that by Proposition \ref{Prop: Generated pretop generates AFJ top} $\rho$ generates $\AFJ$, so it suffices to argue if presheaves $P$ on $\Dcat$ are $\AFJ$-sheaves by checking on $\rho$-coverages by a standard result of site theory (cf.\@ Proposition 1 of \cite{MacLaneMoerdijk}). However, we need to prove the lemma below, save for with one observation at hand. Note that since $\AFJ$ is generated by $FJ$, any nontrivial cover of an object $V$ of $\Dcat$ can be refined by some cover of the form $\lbrace g_i:FU_i \to V \; | \; i \in I \rbrace$. Thus, when one considers the sheaf condition for a presheaf $P$ on $\Dcat$, if $V$ is an object of $\Dcat$ with $\Dcat(FU,V) \ne \emptyset$ for some $U$ in $\Ccat$, it suffices to consider refinements of $\lbrace V_j \to V \; | \; j \in J\rbrace$ of the form $\lbrace  FU_i \to V \; | \; i \in I \rbrace$.

%
\begin{lemma}\label{Lemma: Sufficient Check when V is not FU  but has maps from}
Let $V$ be an object of $\Dcat$ such that there exists a cover $D := \lbrace h_j:V_j \to V \; | \; j \in J \rbrace \in \rho(V)$ for which there is a refinement $\lbrace g_i:FU_i \to V \; | \; i \in I \rbrace$ that makes $\lbrace FU_i \to FX \; | \; i \in I \rbrace$ the functorial image of a cover $C := \lbrace U_i \to X \; | \; i \in I \rbrace$, where
\[
X := \bigcup_{i \in I} U_i.
\]
Then a presheaf $P$ satisfies the sheaf axiom with respect to the cover $D$ if and only if $P$ satisfies the sheaf axiom with respect to $F(C)$.
\end{lemma}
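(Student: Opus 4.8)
The plan is to reduce both sheaf conditions — the one for the abstract $\rho$-cover $D$ and the one for the functorial cover $F(C)$ — to a single common refinement, exploiting the fact that $\AFJ$ is cover reflecting (Lemma \ref{Lemma: geometric adhesive site is cover reflecting}) together with the standard result that a presheaf is a $J$-sheaf iff it satisfies the sheaf axiom for a cofinal family of covers (Proposition 1 of \cite{MacLaneMoerdijk}). The key observation driving everything is that, since $F$ is geometrically adhesive, $FX = F\left(\bigcup_{i\in I} U_i\right) = \bigcup_{i\in I} FU_i$, and by Proposition \ref{Prop: Adhesive Functors Preserve Open Immersions} and the preceding Theorem the functor $F$ preserves the open immersions $U_i \to X$ and their intersections $U_i \cap U_j$. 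Consequently the pullbacks $FU_i \times_{FX} FU_j$ appearing in the equalizer diagram for the $F(C)$-sheaf condition are exactly $F(U_i \cap U_j)$, so the equalizer diagram computing $P$'s sections over $FX$ relative to $F(C)$ is literally the image under $P$ of a diagram built from the $J$-cover $C$ of $X$.

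First I would handle the direction ``$P$ satisfies the sheaf axiom for $D$ $\Rightarrow$ $P$ satisfies it for $F(C)$.'' Given the refinement $\lbrace g_i:FU_i\to V\mid i\in I\rbrace$ of $D$, note that $F(C)$ is a cover of $FX$, and since the $g_i$ factor through $FX$ (being $g_i = (\text{refinement of }D)$ and $FX$ receiving all the $FU_i$), one gets a comparison between the two sheaf conditions. More precisely, because $\lbrace FU_i \to V\rbrace$ refines $D$ and $D\in\rho(V)$, the sieve generated by $\lbrace FU_i\to V\rbrace$ is $\AFJ$-covering; restricting attention to $FX$, the sieve generated by $F(C)$ on $FX$ is $\AFJ$-covering on $FX$ (it is the functorial image of the $J$-cover $C\in\tau(X)$). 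A matching family for $F(C)$ then glues because $P$ is a sheaf for the covering sieve this generates, using that $\AFJ$ is generated by $FJ$ so one may check gluing against $F$-images. Here the geometric-adhesivity identification $FU_i\times_{FX}FU_j = F(U_i\cap U_j)$ is what guarantees the cocycle condition for the $F(C)$-matching family transports correctly.

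For the converse direction — ``sheaf axiom for $F(C)$ $\Rightarrow$ sheaf axiom for $D$'' — I would argue that since $\lbrace FU_i\to V\mid i\in I\rbrace$ refines $D$ and is itself refined further only through $F$-images (by the remark preceding the lemma, every $\rho$-cover of such a $V$ is refined by a cover of the shape $\lbrace FU_i\to V\rbrace$), it suffices to check the sheaf axiom against $\lbrace FU_i\to V\rbrace$. Now the map $FX\to V$ induced by the universal property of $FX = \bigcup_i FU_i$ lets one compare the equalizer for $\lbrace FU_i\to V\rbrace$ with the equalizer for $F(C)$ over $FX$: a matching family over $V$ pulls back to a matching family over $FX$ for $F(C)$, which glues by hypothesis to a unique section in $P(FX)$, and one then checks this section descends to $P(V)$ — or rather, that the equalizer $P(V)\to \prod_i P(FU_i)\rightrightarrows \prod_{i,j}P(FU_i\times_V FU_j)$ is exact because it is identified, via $FX$, with the exact equalizer for $F(C)$. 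The point is that $FU_i\times_V FU_j$ and $F(U_i\cap U_j)$ may differ, but any matching family is supported compatibly on both since the maps $FU_i\to V$ all factor through $FX$.

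The main obstacle I expect is precisely this last identification: the fibre products $FU_i\times_V FU_j$ taken over $V$ need not coincide with $F(U_i\cap U_j) = FU_i\times_{FX}FU_j$ when $FX\to V$ is not a monomorphism, so the two equalizer diagrams are not on the nose the same. The resolution is to observe that $\lbrace FU_i\to V\rbrace$ refines $D$, hence the induced sieve is $\AFJ$-covering, and to invoke the comparison-of-covers lemma: checking the sheaf axiom on a cover is equivalent to checking it on any refinement with matching overlaps, and the refinement $F(C)\rightsquigarrow \lbrace FU_i\to V\rbrace$ has the property that the canonical maps $F(U_i\cap U_j)\to FU_i\times_V FU_j$ are jointly surjective enough (indeed the $F(U_i\cap U_j)$ cover the relevant overlaps because $FX\to V$ is, locally on $FX$, an open immersion onto its image). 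I would spell this out using the explicit sieve-theoretic description of $\AFJ$ from Definition \ref{Definition: Adhesive Site} and the cover-reflecting property, rather than by a direct diagram chase, to keep the argument clean.
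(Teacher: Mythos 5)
There is a genuine gap, in both directions. For the direction ``sheaf axiom for $D$ $\Rightarrow$ sheaf axiom for $F(C)$'' you write that a matching family for $F(C)$ ``glues because $P$ is a sheaf for the covering sieve this generates.'' But that is not a hypothesis: $P$ is a bare presheaf assumed only to satisfy the sheaf axiom for the single cover $D$, and $D$ is a cover of $V$ while the $F(C)$-condition is an equalizer statement about $P(FX)$, a different object. Nothing in your argument connects sections over $FX$ to the hypothesis over $V$; invoking the sieve generated by $F(C)$ being $\AFJ$-covering is beside the point, since being a covering sieve says nothing about $P$ unless $P$ is already known to be an $\AFJ$-sheaf (which is what Corollary \ref{Cor: Suffices to prove AFJ sheaf on F of J covers}, downstream of this lemma, is meant to establish). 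The paper instead builds the comparison explicitly: it uses the refinement factorizations $g_i = h_j \circ \phi_{ij}$ to define maps $\alpha:\prod_j P(V_j) \to \prod_i P(FU_i)$ and $\beta$ on overlaps, together with $P(g):P(V)\to P(FX)$, and chases generalized elements through the resulting ladder of equalizer diagrams; whatever one thinks of the brevity of its ``$\implies$'' step, it at least works with data actually supplied by the hypotheses.

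For the converse you correctly isolate the real obstacle --- that $FU_i \times_V FU_j$ need not agree with $F(U_i\cap U_j) = FU_i\times_{FX}FU_j$ --- but your proposed resolution does not hold up. The refinement maps $g_i:FU_i\to V$ (and hence $FX\to V$) are arbitrary morphisms of $\Dcat$; nothing in the statement makes them open immersions, locally or otherwise, so the claim that the $F(U_i\cap U_j)$ ``cover the relevant overlaps'' over $V$ has no basis. Moreover, the principle you lean on --- that satisfying the sheaf axiom for a cover is equivalent to satisfying it for any refinement --- is false for a bare presheaf: comparison-of-covers arguments require sheaf conditions on auxiliary (pullback) covers, which again you do not have. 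The paper avoids this entirely: it never identifies the two overlap products, but routes everything through $\alpha$, $\beta$, and the assumed equalizer $e_{FX}$ for $F(C)$, producing the map $k:Z\to P(FX)$ and then arguing it descends to a unique factorization through $e_V$. If you want a sieve-theoretic rewrite of the lemma, you would still need to construct these comparison maps from the factorizations $\phi_{ij}$; the cover-reflecting property of $\AFJ$ alone cannot substitute for them.
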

\begin{proof}
Let us begin by fixing some notation. The map
\[
e_V:P(V) \to \prod_{j \in J} P(V_j)
\]
is the pairing map $e_V = \langle P(h_j) \rangle_{j \in J}$ (and analogously $e_{FX} = \langle P(\iota_{FU_i}) \rangle_{i \in I}$), while since $\lbrace g_i:FU_i \to V \; | \; i \in I \rbrace$ refines $D$, for each $j \in J$ there exists an $i \in I$ such that there is a factorization
\[
\xymatrix{
FU_i \ar[rr]^{g_i} \ar[dr]_{\phi_{ij}} & & V \\
 & V_j \ar[ur]_{h_j}	
}
\]
in $\Dcat$. There then is a map, for any presheaf $P$ on $\Dcat$, $\alpha:\prod_{j \in J}P(V_j) \to \prod_{i \in I}P(FU_i)$ which is given as follows: For each $j \in J$, find all $i \in I$ for which there are factorizations as above, say indexed by the set $I_j$, and then construct the map
\[
\xymatrix{
\prod\limits_{j \in J} P(V_j) \ar[d]_{\pi_j} \ar[r]^-{\alpha_j} & \prod\limits_{i \in I_j} P(FU_i) \\
P(V_j) \ar[ur]_-{\langle \phi_{ij} \rangle_{i \in I_j}}
}
\]
in $\Set$; the map $\alpha$ then takes the form $\alpha = \langle \alpha_j \rangle_{j \in J}$. In this same way we can get a map $\beta:\prod_{j,j^{\prime} \in J} P(V_j \times_V V_{j^{\prime}}) \to \prod_{i, i^{\prime}} P(FU_i \times_{FU} FU_{i^{\prime}})$ by taking $\beta = \langle P(\phi_{ij} \times \phi_{i^{\prime}j^{\prime}}) \rangle$.

$\implies$: This direction is clear by performing a refinement argument and lifting any map $f:Y \to \prod_{i \in I}P(FU_i)$ along the $P\phi_{ij}$, iterating through all the $j \in J$, and then using that this lift factors through both the equalizer $e_V$ and the unique map $P(g):PV \to P(FX)$.

$\impliedby:$ We now assume that $P$ satisfies the sheaf axiom with respect to the cover $D$. Now consider the commuting diagram
\[
\xymatrix{
P(V) \ar[d]_{Pg} \ar[r]^-{e_V} & \prod\limits_{j \in J} P(V_j) \ar[d]_{\alpha} \ar@<.5ex>[rr] \ar@<-.5ex>[rr] & & \prod\limits_{ j, j^{\prime} \in J} P(V_j \times_V V_{j^{\prime}}) \ar[d]^{\beta} \\
P(FX) \ar[r]_-{e_{FX}} & \prod\limits_{i \in I} P(FU_i) \ar@<.5ex>[rr] \ar@<-.5ex>[rr] & & \prod\limits_{i, i^{\prime} \in I} P(FU_i \cap FU_{i^{\prime}})
}
\]
where the maps $\alpha, \beta, e_V,$ and $e_{FX}$ are defined as above, and $e_{FX}$ is an equalizer by assumption of $P$ satisfying the sheaf axiom on $FC$. Now suppose that there exists some set $Z$ and a morphism $f:Z \to \prod_{j \in J} P(V_j)$ such that the diagram
\[
\xymatrix{
Z \ar[r]^-{f} & \prod\limits_{j \in J} P(V_j) \ar@<.5ex>[r] \ar@<.5ex>[r] \ar@<-.5ex>[r] & \prod_{j, j^{\prime} \in J} P(V_j \times_V V_{j^{\prime}})
}
\]
commutes in $\Set$. It then follows by construction that the diagram
\[
\xymatrix{
Z \ar[r]^-{\alpha \circ f} & \prod\limits_{i \in I}	P(FU_i) \ar@<.5ex>[r] \ar@<-.5ex>[r] & \prod\limits_{i,i^{\prime} \in I} P(FU_i \times FU FU_{i^{\prime}})
}
\]
commutes in $\Set$, so by the universal property of an equalizer there exists a unique function $k:Z \to P(FX)$ making the diagram
\[
\xymatrix{
FX \ar[r]^-{e_{FX}} & \prod\limits_{i \in I}	P(FU_i) \ar@<.5ex>[r] \ar@<-.5ex>[r] & \prod\limits_{i,i^{\prime} \in I} P(FU_i \times FU FU_{i^{\prime}})	\\
V \ar@{-->}[u]^{\exists!k} \ar[ur]_{\alpha \circ f}
}
\]
commute. This implies that in particular, the diagram
\[
\xymatrix{
P(FX) \ar[r]^-{e_{FX}} & \prod\limits_{i \in I}P(FU_i) \ar@<.5ex>[r] \ar@<-.5ex>[r] & \prod\limits_{i,i^{\prime} \in I} P(FU_i \times_{FU} FU_{i^{\prime}})	\\
V \ar@{-->}[u]^{\exists! k} \ar[r]_-{f} & \prod\limits_{j \in J} P(V_j) \ar[u]_{\alpha} \ar@<.5ex>[r] \ar@<-.5ex>[r] & \prod\limits_{j,j^{\prime} \in J} P(V_j \times_V V_{j^{\prime}}) \ar[u]_{\beta}
}
\]
commutes. We now claim that $k$ factors through $P(V)$ and $P(g)$. To see this note that since
\[
\alpha \circ f = e_{FX} \circ k = \langle P(\iota_{FU_i}) \rangle_{i \in I} \circ k = \langle P\iota_{FU_i} \circ k \rangle_{i \in I}
\]
it suffices to argue on the $FU_i$ by virtue of the fact that $FX = \bigcup FU_i$ and a colimit  is determined by the maps out if its colimiting objects. Now, since for each $j \in J$ we can find an $i \in I$ such that $g_i = h_j \circ \phi_{ij}$, we need only make local gluing arguments. In particular, fix some $s \in P(FX)$ and note that $e_{FX}(s) = (s_i)_{i \in I}$ produces an element whose image in the product of pullback sections $\prod_{i,i^{\prime} \in I} P(FU_i \times_{FX} FU_{i^{\prime}})$ is the same under $P$ applied to either pullback map; call this image
\[
t = P(\pi_{1}^{i,i^{\prime}})(s_{i})_{i \in I} = P(\pi^{i,i^{\prime}}_{2})(s_i)_{i \in I} = (s_{i,i^{\prime}})_{i,i^{\prime} \in I}.
\]
However, from the refinement condition on covers and the commutativity of the diagram, we can find some section $(v_j)_{j \in J} \in \prod_{j \in J} P(V_j)$ for which
\[
\alpha(v_j)_{j \in J} =  (s_i)
\]
and hence find some $t^{\prime} = (v_{j,j^{\prime}})_{j, j^{\prime} \in J} \in \prod_{j,j^{\prime} \in J}P(V_j \times_V V_{j^{\prime}})$ which maps through $\beta$ to $t$. Explicitly,
\[
\beta(t^{\prime}) = \beta(s_{j,j^{\prime}}) = (s_{i,i^{\prime}})_{i,i^{\prime} \in I} = t.
\]
This in turn allows us to conclude that
\[
P(\pi^{jj^{\prime}}_1)(v_j) = P(\pi^{jj^{\prime}}_2)(v_j)
\]
by applying $\beta$ and then using the commutativity of the diagram. However, writing $s \in P(FX)$ as $s = k(z)$ for some $z \in Z$ gives that
\[
\alpha(v_j)_{j \in J} = (\alpha \circ f)(z) = (e_{FX} \circ k)(z)
\]
and hence the section $(s_i)_{i \in I} = e_{FX}(s)$ comes from simultaneously a uniquely given section over $FX$ and a section on each of the $V_j$. Thus we can find some $v \in P(V)$ for which 
\[
P(g)(v) = s
\]
and
\[
e_{V}(v) = (v_j)_{j \in J},
\]
by the geometry of the $V_j$ and $FX$, and the choice of these $v$ determines a function $\gamma:Z \to P(V)$. That is, we have a factorization
\[
\xymatrix{
Z \ar@/^/[drr]^{k} \ar[dr] \ar@/_/[ddr]_{f} & & \\
 & P(V)	\ar[r]^{P(g)} \ar[d]_{e_V} & P(FX) \ar[d]^{e_{FX}} \\ 
 & \prod\limits_{j \in J} P(V_j) \ar[r]_-{\alpha} & \prod\limits_{i \in I} P(FU_i)	
}
\]
which makes the diagram
\[
\xymatrix{
PV \ar[r]^-{e_V} & \prod\limits_{j \in J} P(V_j) \ar@<.5ex>[rr] \ar@<-.5ex>[rr] & & \prod\limits_{j \in J} P(V_j \times_V V_{j^{\prime}}) \\
Z \ar[u]^{\exists\gamma} \ar[ur]_{f} 	
}
\]
commutes. However, this map is $\gamma$ can be easily seen to be unique as follows: If there exists a morphism $\delta:Z \to P(V)$ giving the same factorization, the fact that $e_{FX}$ is an equalizer gives that
\[
P(g) \circ \gamma = k = P(g) \circ \delta;
\]
consequently, from the uniqueness of $k$ and the fact that $P(g)$ is determined based on the gluing data of the $FU_i$, it follows that $\gamma = \delta$. However, this implies that the diagram
\[
\xymatrix{
	PV \ar[r]^-{e_V} & \prod\limits_{j \in J} P(V_j) \ar@<.5ex>[rr] \ar@<-.5ex>[rr] & & \prod\limits_{j \in J} P(V_j \times_V V_{j^{\prime}}) \\
	Z \ar@{-->}[u]_{\exists!} \ar[ur]_{f} 	
}
\]
commutes and so $e_V$ is an equalizer, as was  to be shown.
\end{proof}
\begin{remark}
The above lemma shows that if $V$ is an object of $\Dcat$ with $V \not\cong FX$ for all objects $X$ of $\Ccat$, then we can characterize the $\AFJ$-covering sieves of $V$ as follows: Assume that the $J$-cover $\lbrace V_j \to V \; | \; j \in J \rbrace$ of $V$ is refined by $\lbrace FX_i \to V \; | \; i \in I \rbrace$ and assume that $X$ is the gluing of the $X_i$ so that $FX$ is the gluing of the $FX_i$. Let $\rho:FX \to V$ be the canonical map. Then, for any covering sieve $S$ containing the cover $\lbrace V_j \to V \; | \; j \in J \rbrace$, we must be able to find a $J$-cover $R$ of $X$ which factors through covers on each of the $X_i$, and $S$ must contain the  set $\lbrace \rho \circ F\varphi \; | \; \varphi \in R\rbrace$.
\end{remark}
\begin{corollary}\label{Cor: Suffices to prove AFJ sheaf on F of J covers}
Let $\mathscr{F}:\Dcat^{\op} \to \Set$ be a presheaf. Then $\mathscr{F}$ is an $\AFJ$-sheaf if and only if for all $U \in \ob\Ccat$, the diagram
\[
\xymatrix{
	\mathscr{F}(FU) \ar[r] & \prod\limits_{i \in I} \mathscr{F}(FU_i) \ar@<.5ex>[rr] \ar@<-.5ex>[rr] & & \prod\limits_{i,j \in I} \mathscr{F}(FU_i \cap FU_j)
}
\]
is an equalizer for all $J$-covers $\lbrace U_i \to U \; | \; i \in I \rbrace$ of $U$.
\end{corollary}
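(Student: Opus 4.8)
The plan is to derive this immediately from Lemma~\ref{Lemma: Sufficient Check when V is not FU  but has maps from} together with the reductions to generating pretopologies established above, treating the two implications separately.

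\emph{Forward direction.} Suppose $\mathscr{F}$ is an $\AFJ$-sheaf and let $C = \lbrace U_i \to U \mid i \in I\rbrace$ be a $J$-cover of an object $U$ of $\Ccat$. By Definition~\ref{Definition: Adhesive Site} the family $FC = \lbrace FU_i \to FU \mid i \in I\rbrace$ generates an $\AFJ$-covering sieve on $FU$, so $\mathscr{F}$ satisfies the sheaf axiom with respect to it. The only thing to check is that the double-index term of the equalizer is $\prod_{i,j \in I}\mathscr{F}(FU_i \cap FU_j)$: when the $U_i \to U$ are open immersions this follows from the theorem that a geometrically adhesive $F$ preserves intersections, so $FU_i \cap FU_j = F(U_i \cap U_j) = F(U_i \times_U U_j)$, and in general one reads $FU_i \cap FU_j$ as the fibre product $FU_i \times_{FU} FU_j$, exactly as in the proof of Lemma~\ref{Lemma: Sufficient Check when V is not FU  but has maps from}.

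\emph{Reverse direction.} Assume the displayed diagram is an equalizer for every $U \in \ob\Ccat$ and every $J$-cover of $U$. By Proposition~\ref{Prop: Generated pretop generates AFJ top} the pretopology $\rho = \langle F\tau\rangle$ generates $\AFJ$, so by the standard criterion that a presheaf is a sheaf for a topology once it satisfies the sheaf axiom on a pretopology generating it (Proposition~1 of \cite{MacLaneMoerdijk}), it suffices to check the sheaf axiom for every $\rho$-cover $D = \lbrace h_j : V_j \to V \mid j \in J\rbrace$ of every object $V$ of $\Dcat$. As noted in the discussion preceding the statement, since $\AFJ$ is generated by $FJ$ any such $D$ admits a refinement $\lbrace g_i : FU_i \to V \mid i \in I\rbrace$; putting $X := \bigcup_{i \in I} U_i$, the family $\lbrace FU_i \to FX \mid i \in I\rbrace$ is the functorial image of the $J$-cover $C := \lbrace U_i \to X \mid i \in I\rbrace$. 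Lemma~\ref{Lemma: Sufficient Check when V is not FU  but has maps from} then applies and gives that $\mathscr{F}$ satisfies the sheaf axiom with respect to $D$ if and only if it does with respect to $F(C)$; the latter is exactly the hypothesis applied to the $J$-cover $C$ of $X$. Hence $\mathscr{F}$ is an $\AFJ$-sheaf.

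The substance is entirely in the reverse direction, and the main obstacle there is the content already packaged in Lemma~\ref{Lemma: Sufficient Check when V is not FU  but has maps from}: transporting the equalizer condition between a $\rho$-cover $D$ of $V$ and the $F$-image $F(C)$ of a $J$-cover of the gluing $X = \bigcup U_i$ requires careful tracking of the refinement maps $FU_i \to V_j$, of the comparison maps $\alpha,\beta$ on the products of sections, and of the gluing of the $FU_i$ into $FX$, together with a separatedness argument to descend from the refinement back to the coarser cover. Since that lemma is already established, the corollary follows by unwinding these reductions; the only residual points are that $\lbrace U_i \to X\rbrace$ is genuinely a $J$-cover of $X$ and that the refinement produced has the precise shape the lemma demands, both of which are routine.
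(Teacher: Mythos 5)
Your overall route is the same as the paper's: both directions are funneled through Lemma~\ref{Lemma: Sufficient Check when V is not FU  but has maps from} after reducing to the generating pretopology $\rho = \langle F\tau\rangle$ via Proposition~\ref{Prop: Generated pretop generates AFJ top}, and your forward direction (including the identification of $FU_i \cap FU_j$ with $F(U_i\cap U_j)$, respectively with the fibre product $FU_i \times_{FU} FU_j$) is fine. The gap is in the reverse direction, at the sentence asserting that \emph{every} $\rho$-cover $D$ of \emph{every} object $V$ of $\Dcat$ admits a refinement of the form $\lbrace g_i : FU_i \to V \mid i \in I\rbrace$. That claim fails for objects $V$ with $\Dcat(FX,V) = \emptyset$ for all $X \in \ob\Ccat$: there are simply no morphisms from any $FU_i$ into such a $V$, so no refinement of the required shape exists and Lemma~\ref{Lemma: Sufficient Check when V is not FU  but has maps from} cannot be invoked. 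Such objects are not a pathology the paper ignores --- they occur in exactly the situations it cares about (e.g.\ the object $\Spec \Z_{\ell}$ adjoined to $\FSch_{/\Spec \Z_p}$ in the remark following Proposition~\ref{Prop: Essential Geomettric morphism is full if functor is full and surj}), and their presence is why Assumption (A3) is imposed later.

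The paper's proof closes this by splitting into two cases: for $V$ admitting a morphism to or from some $FX$ it applies the lemma, exactly as you do; for $V$ with $\Dcat(V,FX) = \emptyset = \Dcat(FX,V)$ for all $X$ it observes that the only $\AFJ$-covering sieves on $V$ are the trivial ones, for which every presheaf automatically satisfies the sheaf axiom, so nothing needs to be checked there. Your argument becomes correct once you add this second case (or restrict your blanket refinement claim to nontrivial covers of objects connected to the image of $F$); as written, the unqualified refinement step is the missing ingredient.
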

\begin{proof}
For objects of $\Dcat$ that either receive or have morphisms from or into objects $FX$, for $X \in \Ccat_0$, this follows from the above lemma. The remaining case follows from observing that if $V \in \Dcat_0$  with the relations $\Dcat(V,FX) = \emptyset = \Dcat(FX,V)$ for all $X \in \Ccat_0$, then the only $\AFJ$ covers on $V$ are trivial. This in turn implies that any presheaf satisfies the sheaf axiom over $V$; combining this with the prior lemma gives the corollary.
\end{proof}

\begin{lemma}\label{Lemma: Pullbacck functor preservves sheaves}
Let $F:\Ccat \to \Dcat$ be geometrically adhesive and let $J$ be a site on $\Ccat$. Then the functor
\[
F^{\ast}:\Shv(\Dcat,\AFJ) \to \left[(\Ccat)^{\operatorname{op}},\Set\right]
\]
given by $F^{\ast}(\mathscr{F}) := \mathscr{F} \circ F$ factors as:
\[
\xymatrix{
\Shv(\Dcat,\AFJ) \ar[dr]_{F^{\ast}} \ar[rr]^{F^{\ast}} & & \left[(\Ccat)^{\operatorname{op}},\Set\right] \\
	& \Shv(\Ccat,J) \ar[ur] & 
}
\]
\end{lemma}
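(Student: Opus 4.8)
The statement to establish amounts to showing that $F^{\ast}$ carries every $\AFJ$-sheaf $\mathscr{F}$ on $\Dcat$ to a $J$-sheaf on $\Ccat$; once this is known the asserted factorization is automatic, since $\Shv(\Ccat,J)$ is a full subcategory of $[\Ccat^{\op},\Set]$ and the corestriction of $F^{\ast}$ along that inclusion is the functor appearing in the diagram. So the plan is: fix an $\AFJ$-sheaf $\mathscr{F}$, set $\mathscr{G}:=\mathscr{F}\circ F=F^{\ast}\mathscr{F}$, and verify the $J$-sheaf axiom for $\mathscr{G}$. Using the standing hypothesis that $J$ is generated by a pretopology $\tau$, and that a presheaf is a $J$-sheaf as soon as it satisfies the sheaf axiom on a generating pretopology, it suffices to check the axiom on $\tau$-covers. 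Thus let $\{\phi_i:U_i\to U\mid i\in I\}\in\tau(U)$; unwinding $\mathscr{G}=\mathscr{F}\circ F$, what we need is that
\[
\mathscr{F}(FU)\longrightarrow\prod_{i\in I}\mathscr{F}(FU_i)\rightrightarrows\prod_{i,j\in I}\mathscr{F}\bigl(F(U_i\times_U U_j)\bigr)
\]
is an equalizer in $\Set$.

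The next step is to note that $\{F\phi_i:FU_i\to FU\mid i\in I\}$ generates an $\AFJ$-covering sieve of $FU$: the sieve it generates contains $FC$ for $C$ the $J$-covering sieve generated by $\{\phi_i\}$, so this is exactly Definition \ref{Definition: Adhesive Site} together with the cover-reflection of Lemma \ref{Lemma: geometric adhesive site is cover reflecting}. Since $\mathscr{F}$ is an $\AFJ$-sheaf, Corollary \ref{Cor: Suffices to prove AFJ sheaf on F of J covers} then gives that
\[
\mathscr{F}(FU)\longrightarrow\prod_{i\in I}\mathscr{F}(FU_i)\rightrightarrows\prod_{i,j\in I}\mathscr{F}(FU_i\cap FU_j)
\]
is an equalizer, where $FU_i\cap FU_j$ denotes the fibre product $FU_i\times_{FU}FU_j$ formed in $\Dcat$ (in the sense already used in Lemma \ref{Lemma: Sufficient Check when V is not FU  but has maps from}). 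This display has the same first two terms as the one we want, so the entire matter reduces to comparing the third terms.

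The comparison goes through the canonical morphism $\theta_{ij}:F(U_i\times_U U_j)\to FU_i\times_{FU}FU_j$ obtained by applying $F$ to the two projections and invoking the universal property of the pullback in $\Dcat$; this $\theta_{ij}$ is compatible with the two parallel families of restriction maps, so $\mathscr{F}(\theta_{ij})$ relates the two equalizer diagrams, and it is enough to know that $\prod_{i,j}\mathscr{F}(\theta_{ij})$ is injective, since then any family equalized after postcomposition with it is already equalized. I expect this to be the step carrying the real weight. When $\tau$ consists of open immersions, the $\phi_i$ are open immersions, $U_i\times_U U_j=U_i\cap U_j$, and $\theta_{ij}$ is an isomorphism by the theorem above that geometrically adhesive functors preserve intersections ($F(U\cap V)=FU\cap FV$) together with Proposition \ref{Prop: Adhesive Functors Preserve Open Immersions}, so in the Zariski-type case the two displays literally coincide. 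For a general pretopology $\tau$ — the {\'e}tale and fppf topologies being the cases of interest — one cannot shortcut through cocontinuity, since Proposition \ref{Proposition: Adhesive need not be cocont} shows geometrically adhesive functors need not preserve general colimits or pullbacks; instead I would show that $\theta_{ij}$ is surjective on underlying spaces (the canonical map from a fibre product onto the product of the underlying spaces always is) and refines to an $\AFJ$-cover, so that $\mathscr{F}(\theta_{ij})$ is injective by separatedness of $\mathscr{F}$. In the concrete case $F=\hfrak$ with $J$ {\'e}tale or fppf, the stronger statement $\hfrak(U_i\times_U U_j)\cong\hfrak U_i\times_{\hfrak U}\hfrak U_j$ holds and can be read off directly from the compatibility of the $p$-typical Witt vector functor with {\'e}tale (resp.\@ smooth) base change. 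In all of these cases, substituting the identification of the third terms into the equalizer from Corollary \ref{Cor: Suffices to prove AFJ sheaf on F of J covers} yields precisely the first displayed equalizer, i.e.\@ the $J$-sheaf axiom for $\mathscr{G}=F^{\ast}\mathscr{F}$ relative to the cover $\{U_i\to U\}$; as $U$ and the cover were arbitrary, $F^{\ast}\mathscr{F}\in\Shv(\Ccat,J)$ and $F^{\ast}$ factors as claimed. Finally, I would remark that this argument makes explicit the general principle that a cover-reflecting (equivalently, by Corollary \ref{Corollary: The Greenberg Functor is site coconinuous}, cocontinuous) functor of sites induces by precomposition a functor between the associated sheaf toposes in the opposite direction.
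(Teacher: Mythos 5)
Your proposal is correct and follows essentially the same route as the paper's proof: reduce the $J$-sheaf axiom for $F^{\ast}\mathscr{F}$ to covers in a generating pretopology $\tau$, use that $\langle F\tau\rangle$ generates $\AFJ$ (Proposition \ref{Prop: Generated pretop generates AFJ top} and Corollary \ref{Cor: Suffices to prove AFJ sheaf on F of J covers}) to get the equalizer over the images $F\phi_i$, and then identify the third terms of the two equalizer diagrams. The only divergence is at that identification: the paper equates $\mathscr{F}(F(U_i\cap U_j))$ with $\mathscr{F}(FU_i\cap FU_j)$ in one stroke via the intersection-preservation theorem, whereas you single out the comparison map $\theta_{ij}:F(U_i\times_U U_j)\to FU_i\times_{FU}FU_j$ and treat the non-Zariski case separately -- a more cautious rendering of the same step rather than a different argument.
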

\begin{proof}
Begin by observing that by Corollary \ref{Cor: Suffices to prove AFJ sheaf on F of J covers} it suffices to prove that a presheaf on $\Dcat$ is in fact an $\AFJ$-sheaf by using the functorial images of $J$-covers. Let $\mathscr{F}$ be any $\AFJ$-sheaf and let $\lbrace \phi_i:U_i \to U \; | \; i \in I \rbrace$ be a $J$-covering in a pretopology $\tau$ generating $J$. Then consider the following diagram, where the equality between the second and third rows follows from:
\[
\xymatrix{
(F^{\ast}\mathscr{F})(U) \ar[rr]^-{\langle (F^{\ast}\mathscr{F})(\phi_{i})\rangle_{i \in I}} \ar@{=}[d] & & \prod\limits_{i \in I} (F^{\ast}\mathscr{F})(U_i) \ar@{=}[d] \ar@<.5ex>[r] \ar@<-.5ex>[r] & \prod\limits_{i, j \in I} (F^{\ast}\mathscr{F})(U_i \cap U_j) \ar@{=}[d] \\
\ar@{=}[d] \mathscr{F}(FU) \ar[rr]_-{\langle \mathscr{F}(F\phi_i)\rangle_{i \in I}} & & \ar@{=}[d] \prod\limits_{i \in I} \mathscr{F}(FU_i) \ar@<.5ex>[r] \ar@<-.5ex>[r] & \prod\limits_{i,j \in I}\mathscr{F}(F(U_i \cap U_j))	\ar@{=}[d] \\
\mathscr{F}(FU) \ar[rr]_-{\langle \mathscr{F}(F\phi_i)\rangle_{i \in I}} & & \prod\limits_{i \in I} \mathscr{F}(FU_i) \ar@<.5ex>[r] \ar@<-.5ex>[r] & \prod\limits_{i,j \in I}\mathscr{F}(F(U_i) \cap F(U_j))
}
\]
Since $\AFJ$ is generated by the pretopology $\langle F\tau \rangle$ defined by the $J$-covers (cf.\@ Proposition \ref{Prop: Generated pretop generates AFJ top} above), it follows that $\lbrace F\phi_i:FU_i \to FU \; | \; i \in I \rbrace$ generates and refines an $\AFJ$-cover; using that $\mathscr{F}$ is an $\AFJ$-sheaf implies that the bottom row in the diagram is an equalizer and hence that the top is as well. This proves the lemma.
\end{proof}
\begin{Theorem}\label{Theorem: The pullback/pushforward adjunction}
The functor $F:\Ccat \to \Dcat$ induces an essential geometric morphism, perversely also named $F$,
\[
F:\Shv(\Ccat,J) \to \Shv(\Dcat,\AFJ).
\]
\end{Theorem}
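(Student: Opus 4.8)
The plan is to produce the three functors $F_{!}\dashv F^{\ast}\dashv F_{\ast}$ between the two sheaf toposes; once the adjunctions are in place, $F^{\ast}$ is left exact automatically (being a right adjoint), so $F^{\ast}\dashv F_{\ast}$ is a geometric morphism and $F_{!}\dashv F^{\ast}\dashv F_{\ast}$ exhibits it as an essential one. For the inverse image take $F^{\ast}\colon\Shv(\Dcat,\AFJ)\to\Shv(\Ccat,J)$, $\mathscr{G}\mapsto\mathscr{G}\circ F$, which is a well-defined functor by Lemma \ref{Lemma: Pullbacck functor preservves sheaves}.

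For the left adjoint, let $F^{p}\colon[\Dcat^{\op},\Set]\to[\Ccat^{\op},\Set]$ be precomposition with $F$, let $F_{\#}$ denote its left adjoint (left Kan extension along $F^{\op}$), and let $a_{J}\dashv\iota_{J}$, $a_{\AFJ}\dashv\iota_{\AFJ}$ be the sheafification/inclusion adjunctions. I would set $F_{!}:=a_{\AFJ}\circ F_{\#}\circ\iota_{J}\colon\Shv(\Ccat,J)\to\Shv(\Dcat,\AFJ)$ and verify $F_{!}\dashv F^{\ast}$ via
\[
\Hom(F_{!}\mathscr{F},\mathscr{G})\;\cong\;\Hom\bigl(F_{\#}\iota_{J}\mathscr{F},\iota_{\AFJ}\mathscr{G}\bigr)\;\cong\;\Hom\bigl(\iota_{J}\mathscr{F},(\iota_{\AFJ}\mathscr{G})\circ F\bigr)\;\cong\;\Hom(\mathscr{F},F^{\ast}\mathscr{G}),
\]
where the isomorphisms come, in order, from $a_{\AFJ}\dashv\iota_{\AFJ}$, from $F_{\#}\dashv F^{p}$, and from Lemma \ref{Lemma: Pullbacck functor preservves sheaves}: the presheaf $(\iota_{\AFJ}\mathscr{G})\circ F$ is already a $J$-sheaf, namely $\iota_{J}(F^{\ast}\mathscr{G})$, so the last Hom-set may be computed in $\Shv(\Ccat,J)$.

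It remains to produce a right adjoint $F_{\ast}$ of $F^{\ast}$, and this is the step I expect to be the crux. I would show $F^{\ast}$ preserves all small colimits and then apply the adjoint functor theorem for Grothendieck toposes. As $F^{p}$ is computed objectwise it commutes with objectwise colimits of presheaves, so the only possible obstruction is that $F^{p}$ fail to be compatible with sheafification; this is exactly where the cover-reflecting property of $F\colon(\Ccat,J)\to(\Dcat,\AFJ)$ (Lemma \ref{Lemma: geometric adhesive site is cover reflecting}), i.e.\ the fact that $\AFJ$ is generated by the $F$-images of $J$-covers, must be used. Concretely, I would check that precomposition with $F$ sends $\AFJ$-local isomorphisms of presheaves to $J$-local isomorphisms: local injectivity and local surjectivity of $\varphi\circ F$ at an object $FU$ reduce to refining an $\AFJ$-cover of $FU$ by one of the form $\{FU_{i}\to FU\}$ arising from a $J$-cover $\{U_{i}\to U\}$, which Definition \ref{Definition: Adhesive Site} supplies (compare the refinement arguments in Proposition \ref{Prop: Generated pretop generates AFJ top} and Corollary \ref{Cor: Suffices to prove AFJ sheaf on F of J covers}). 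Granting this, $F^{\ast}$ commutes with sheafification, hence preserves colimits and admits $F_{\ast}$. Alternatively one may simply cite the standard fact that a cover-reflecting — equivalently cocontinuous — functor of sites induces a geometric morphism, the fact already underlying Corollary \ref{Corollary: The Greenberg Functor is site coconinuous}, in which case the only new content is the construction of $F_{!}$; either way, everything besides this colimit-preservation is a formal adjunction computation resting on Lemma \ref{Lemma: Pullbacck functor preservves sheaves}.
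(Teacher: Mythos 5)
Your proposal is correct, but it follows a genuinely different route from the paper's. The paper produces both adjoints purely abstractly: it argues that $F^{\ast}$ preserves all small colimits (and, dually, all small limits) by a pasting-diagram computation of $(\lim_{\longrightarrow}\mathscr{F}_i)\circ F$, and then invokes Freyd's Adjoint Functor Theorem twice to obtain $F_{\ast}$ and $F_{!}$, with no formulas for either adjoint. You instead build $F_{!}$ explicitly as the sheafified left Kan extension $a_{\AFJ}\circ F_{\#}\circ \iota_{J}$ and verify $F_{!}\dashv F^{\ast}$ by a direct hom-set computation resting on Lemma \ref{Lemma: Pullbacck functor preservves sheaves}, and you obtain $F_{\ast}$ either from colimit preservation of $F^{\ast}$ plus an adjoint functor theorem, or by citing the standard theorem that a cover-reflecting (cocontinuous) functor of sites induces a geometric morphism. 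The comparison is instructive at exactly one point: colimits of sheaves are sheafifications of objectwise presheaf colimits, and the paper's pasting argument treats them as if they were computed objectwise; your reduction of the problem to showing that precomposition with $F$ carries $\AFJ$-local isomorphisms to $J$-local isomorphisms --- which is precisely what cover reflection (Lemma \ref{Lemma: geometric adhesive site is cover reflecting}) delivers, combined with Lemma \ref{Lemma: Pullbacck functor preservves sheaves} to identify the sheafification --- supplies the justification the paper elides, so your argument is, if anything, more careful at the delicate step, and it additionally yields an explicit description of $F_{!}$ that is convenient in the later sections where $F_{!}$ is applied to normal objects. What the paper's route buys is brevity and uniformity, since one argument dualizes to give both adjoints at once. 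Both routes share the same implicit size caveat: the sites involved are large, so any appeal to an adjoint functor theorem (Freyd's in the paper, the presentable-category version in your proposal) rests on the universe conventions the paper fixes in its footnote.
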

\begin{proof}
We will prove that the functor $F^{\ast}$ preserves all small\footnote{Here the word ``small'' means that we assume we're working in some Grothendieck Universe $\mathscr{V}$ where all our ``small'' objects are of size $\alpha \leq \kappa$, where $\kappa$ is some strongly inaccessible cardinal. This is the last comment we will make on this subject, and the reader who does not care may simply say that anything that is ``small'' is as large as some set that is not a proper class.} colimits, as from here an appeal to Freyd's Adjoint Functor Theorem will show that $F^{\ast}$ has a right adjoint. To do this, let $\lbrace \mathscr{F}_i \; | \; i \in I \rbrace$ be a family of $\AFJ$-sheaves and let $\mathscr{F}$ be the colimit
\[
\mathscr{F} := \lim_{\longrightarrow} \mathscr{F}_i
\]
with colimit morphisms $\alpha_i:\mathscr{F}_i \to \mathscr{F}$. Now consider that to prove that $F^{\ast}$ preserves small colimits, we must show that
\[
F^{\ast}\left(\lim_{\longrightarrow} \mathscr{F}_i\right) = \lim_{\longrightarrow}\left(F^{\ast}\mathscr{F}_i\right);
\]
unwrapping this definition shows that we must prove that for all $U \in \ob\Ccat$,
\[
\left(\lim_{\longrightarrow}\mathscr{F}_i\right)(FU) \overset{?}= \lim_{\longrightarrow}\left(\mathscr{F}_i(FU)\right).
\]
To do this, we first observe that observe that since $\Shv(\Ccat,J)$ is a subtopos of the presehaf topos $\left[(\Ccat)^{\operatorname{op}},\Set\right]$, it suffices to compute whether or not the proposed equality holds by evaluating each natural transformation. However, consider that for each sheaf $\mathscr{F}_i$, there is an induced natural transformation $F^{\ast}\mathscr{F}_i  \to F^{\ast}\mathscr{F}$ given from the horizontal composition in the pasting diagram below, where $r:\Dcat \to (\Dcat)^{\operatorname{op}}$ is the formal reflection of $\Dcat$ to its opposite category:
\[
\xymatrix{
\Ccat \ar[r]^-{F} & \Dcat \ar[r]^-{r} & \Dcat^{\operatorname{op}} \rrtwocell^{\mathscr{F}_i}_{\mathscr{F}}{\alpha_i} & & \Set
}
\]
Since the sheaves $F^{\ast}\mathscr{F}_i$ and $F^{\ast}\mathscr{F}$ act on $\Ccat$ through the above diagram, we find that in the comparison diagrams that only $\alpha_i$ varies. Thus, taking the colimit we find that
\[
\lim_{\longrightarrow}(\mathscr{F}_i \circ r \circ F) = \left(\lim_{\longrightarrow}\mathscr{F}_i\right) \circ r \circ F = \mathscr{F} \circ r \circ F
\]
so we have that
\[
\lim_{\longrightarrow}\left( \mathscr{F}_i(FU)\right) = \mathscr{F}(FU) = \left(\lim_{\longrightarrow}\mathscr{F}_i\right)(FU)
\]
which proves that $F^{\ast}$ preserves all small colimits. Thus, by Freyd's Adjoint Functor Theorem, $F^{\ast}$ has a right adjoint $F_{\ast}$. Thus there is an adjunction
\[
\begin{tikzcd}
\Shv(\Dcat_{/Y},\AFJ) \ar[r,bend left,"F^{\ast}",""{name=A, below}] & \Shv(\Ccat_{/X},J)\ar[l,bend left,"F_{\ast}",""{name=B,above}] \ar[from=A, to=B, symbol=\dashv]
\end{tikzcd}
\]
which proves the first half of the essential geometric morphism.

We will now  be done if we can exhibit the existence of a left adjoint to $F^{\ast}$, i.e., if we can prove that there is a functor $F_{!}:\Shv(\Ccat,J) \to \Shv(\Dcat,\AFJ)$ such that there is an adjunction:
\[
\begin{tikzcd}
\Shv(\Ccat,J) \ar[r,bend left,"F_{!}",""{name=A, below}] & \Shv(\Dcat,\AFJ)\ar[l,bend left,"F^{\ast}",""{name=B,above}] \ar[from=A, to=B, symbol=\dashv]
\end{tikzcd}
\]
However, we can show that if 
\[
\mathscr{F} = \lim_{\substack{\longleftarrow \\ i \in I}} \mathscr{F}_i
\]
with projections $\rho_i:\mathscr{F} \to \mathscr{F}_i$, then we can calculate the limit in $\Shv(\Ccat,J)$ as in the pasting diagram
\[
\xymatrix{
	\Ccat \ar[r]^-{F} & \Dcat_{/Y} \ar[r]^-{r} & \Dcat^{\operatorname{op}} \rrtwocell^{\mathscr{F}}_{\mathscr{F}_i}{\rho_i} & & \Set
}
\]
just like the prior case. Dualizing the argument from here shows by Freyd's Adjoint Functor Theorem that $F^{\ast}$ preserves all small limits and hence that there is an adjunction
\[
\begin{tikzcd}
\Shv(\Ccat,J) \ar[r,bend left,"F_{!}",""{name=A, below}] & \Shv(\Dcat,\AFJ)\ar[l,bend left,"F^{\ast}",""{name=B,above}] \ar[from=A, to=B, symbol=\dashv]
\end{tikzcd}
\]
and hence we have the triple adjunction $F_{!} \dashv F^{\ast} \dashv F_{\ast}$ describing the essential geometric morphism $F:\Shv(\Ccat,J) \to \Shv(\Dcat,\AFJ).$
\end{proof}
\begin{remark}
Note that Theorem \ref{Theorem: The pullback/pushforward adjunction} cannot be deduced from \cite{JohnstoneSketches2} because, amongst other reasons, the functor $F$ does not preserve terminal objects in general. For instance, if $F = \hfrak:\Sch_{\Spec \Fp} \to \FSch_{/\Spec \Z_p}$, then $F(\Spec \Fp) = \Spf \Z_p \ne \Spec \Z_p$.
\end{remark}
\begin{lemma}
If $p$ is a point of $\Shv(\Ccat_{/X},J)$ then the composite geometric morphism $F\circ p$ is a point of $\Shv(\Dcat_{/Y},\AFJ)$.
\end{lemma}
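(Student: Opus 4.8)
The plan is to observe that this statement is a formal consequence of Theorem~\ref{Theorem: The pullback/pushforward adjunction} together with the fact that geometric morphisms are closed under composition. Recall that a point of a Grothendieck topos $\mathcal{E}$ is, by definition, a geometric morphism $p:\Set \to \mathcal{E}$, equivalently the data of an inverse image functor $p^{\ast}:\mathcal{E} \to \Set$ preserving finite limits together with a right adjoint $p_{\ast}:\Set \to \mathcal{E}$. So I would begin by fixing such a point $p:\Set \to \Shv(\Ccat_{/X},J)$ with inverse/direct image pair $(p^{\ast},p_{\ast})$.

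Next I would invoke Theorem~\ref{Theorem: The pullback/pushforward adjunction}, which provides the (essential) geometric morphism $F:\Shv(\Ccat_{/X},J) \to \Shv(\Dcat_{/Y},\AFJ)$ with inverse image $F^{\ast}$ and direct image $F_{\ast}$; in particular $F^{\ast}$ preserves all small limits, hence a fortiori all finite limits. I would then form the composite functors $q^{\ast} := p^{\ast} \circ F^{\ast}:\Shv(\Dcat_{/Y},\AFJ) \to \Set$ and $q_{\ast} := F_{\ast} \circ p_{\ast}:\Set \to \Shv(\Dcat_{/Y},\AFJ)$. Since a composite of right adjoints is right adjoint to the composite of the corresponding left adjoints, we get $q^{\ast} \dashv q_{\ast}$; and $q^{\ast}$ preserves finite limits because it is a composite of two functors that do ($F^{\ast}$ by Theorem~\ref{Theorem: The pullback/pushforward adjunction}, and $p^{\ast}$ because $p$ is a point). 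Hence $(q^{\ast},q_{\ast})$ defines a geometric morphism $\Set \to \Shv(\Dcat_{/Y},\AFJ)$, which is exactly the composite $F \circ p$, and which is by definition a point of $\Shv(\Dcat_{/Y},\AFJ)$.

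There is no genuine obstacle here; the only thing one must be careful about is bookkeeping the direction of the geometric morphism $F$ supplied by Theorem~\ref{Theorem: The pullback/pushforward adjunction}, namely $\Shv(\Ccat_{/X},J) \to \Shv(\Dcat_{/Y},\AFJ)$, so that $F \circ p:\Set \to \Shv(\Dcat_{/Y},\AFJ)$ is legitimately formed with inverse image $p^{\ast}\circ F^{\ast}$ running in the opposite direction. Everything else is the standard fact that the $2$-category of toposes and geometric morphisms admits composition, which I would either cite (e.g.\ from \cite{MacLaneMoerdijk}) or spell out in the one line above.
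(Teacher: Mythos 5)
Your proposal is correct and takes essentially the same route as the paper: both invoke Theorem~\ref{Theorem: The pullback/pushforward adjunction} to get the geometric morphism $F$ and then conclude by composing geometric morphisms, with your version merely spelling out the composite adjunction $p^{\ast}\circ F^{\ast} \dashv F_{\ast}\circ p_{\ast}$ that the paper cites implicitly via composition in $\Topos$.
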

\begin{proof}
Recall that the $2$-category $\Topos$ of toposes has for morphisms geometric morphisms $\CalE \xrightarrow{f} \CalF$. Then since a point of $\Shv(\Ccat_{/X},J)$ is a geometric morphism $p:\Set \to \Shv(\Ccat_{/X},J)$ and $F:\Shv(\Ccat_{/X},J) \to \Shv(\Dcat_{/Y},\AFJ)$ is an essential geometric morphism by Theorem \ref{Theorem: The pullback/pushforward adjunction}, we have that the diagram
\[
\xymatrix{
 & \Shv(\Ccat_{/X},J) \ar[dr]^{F} & \\
\Set \ar[ur]^{p} \ar[rr]_{F \circ p} & & \Shv(\Dcat_{/Y},\AFJ)
}
\]
commutes in $\Topos$. Thus $F \circ p$ is a point of $\Shv(\Dcat_{/Y},\AFJ)$.
\end{proof}
\begin{corollary}
The topos $\Shv(\FSch_{/\Spf \Z_p},\mathcal{A}_{\mathfrak{h}}^{{\acute{E}t}})$ has a point.
\end{corollary}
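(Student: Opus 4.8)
The plan is to obtain this corollary as a direct instance of the preceding lemma by feeding it a single point of the étale topos over $\Spec\Fp$. Take $\Ccat=\Sch$, $X=\Spec\Fp$, $\Dcat=\FSch$, $Y=\Spf\Z_p$, $F=\hfrak$ the Greenberg functor (we are in the absolutely unramified case $R=\Z_p$, $k=\Fp$, $e=1$, so the explicit description of $\hfrak$ applies), and $J=\acute{E}t$. Then $\AFJ=\mathcal{A}_{\hfrak}^{\acute{E}t}$, so once we exhibit a point $p\colon\Set\to\Shv(\Sch_{/\Spec\Fp},\acute{E}t)$, the preceding lemma gives that $\hfrak\circ p$ is a point of $\Shv(\FSch_{/\Spf\Z_p},\mathcal{A}_{\hfrak}^{\acute{E}t})$, which is exactly the assertion.

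To construct $p$, first I would observe that the object $c:=\Spec\overline{\Fp}$ of $\Sch_{/\Spec\Fp}$, where $\overline{\Fp}$ is a fixed algebraic closure, carries only the maximal covering sieve in the étale topology: any étale covering family of $c$ contains a nonempty, hence surjective, étale morphism $U\to c$, which admits a section over $c$ since $\overline{\Fp}$ is separably closed, and the resulting composite $c\to c$ is $\id_c$, so the sieve generated is all of $\Sch(-,c)$. Consequently the plus-construction leaves sections over $c$ unchanged, i.e.\@ $(a\mathscr{F})(c)=\mathscr{F}(c)$ for sheafification $a$ and any presheaf $\mathscr{F}$. From this I would deduce that the evaluation functor
\[
p^{\ast}\colon\Shv(\Sch_{/\Spec\Fp},\acute{E}t)\to\Set,\qquad p^{\ast}\mathscr{F}:=\mathscr{F}(c)
\]
preserves all small limits (these are computed as in presheaves) and all small colimits (a sheaf colimit is $a$ applied to the presheaf colimit, which evaluation at $c$ does not see), and is in particular left exact. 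Since $\Shv(\Sch_{/\Spec\Fp},\acute{E}t)$ is a Grothendieck topos, hence locally presentable, the adjoint functor theorem furnishes $p^{\ast}$ with a right adjoint $p_{\ast}$, so $p=(p^{\ast}\dashv p_{\ast})$ is a point. (If one insists on a site of finite-type schemes, so that $c$ is unavailable, one can instead take $p^{\ast}$ to be the stalk functor at a geometric point of an $\Fp$-scheme --- the filtered colimit of sections over its étale neighbourhoods --- and run the identical argument, or simply invoke that big étale toposes have enough points.)

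Finally I would conclude by composition: $\hfrak\colon\Shv(\Sch_{/\Spec\Fp},\acute{E}t)\to\Shv(\FSch_{/\Spf\Z_p},\mathcal{A}_{\hfrak}^{\acute{E}t})$ is an essential geometric morphism by Theorem \ref{Theorem: The pullback/pushforward adjunction}, so by the preceding lemma $\hfrak\circ p$ is a point of $\Shv(\FSch_{/\Spf\Z_p},\mathcal{A}_{\hfrak}^{\acute{E}t})$. I expect the only genuinely substantive step to be the middle one: verifying that evaluation (or the stalk) over a separably closed point really is the inverse-image part of a topos point --- that is, honestly left exact and cocontinuous --- rather than merely a flat functor; the cofinality observation that sheafification is invisible over $c$ is precisely what makes this go through. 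Everything downstream is the formal composition of geometric morphisms already packaged in the lemma, so there is no further obstacle.
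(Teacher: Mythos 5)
Your proposal is correct and follows the same route as the paper: the paper simply cites the well-known fact that the {\'e}tale topos over $\Spec \Fp$ has points and then applies the preceding lemma to the essential geometric morphism induced by $\hfrak$, exactly as you do. Your only addition is an explicit (and sound) construction of such a point via evaluation at $\Spec\overline{\Fp}$, where the paper is content to invoke the standard fact.
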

\begin{proof}
It is well-known that the {\'e}tale topos $\Shv(\Sch_{/\Spec \Fp},\acute{E}t)$ has points. Thus so does the sheaf topos $\Shv(\FSch_{/\Spec \Z_p},\mathcal{A}_{\mathfrak{h}}^{\acute{E}t})$ by the above Lemma.
\end{proof}

We now give some basic results about whether or not a geometrically adhesive functor induces a connected morphism of sheaf toposes. A key assumption that we will make here involves essential surjectivity of the geometrically adhesive functor $F$; this assumption may be relaxed to a condition like that given in the statement of Lemma \ref{Lemma: Sufficient Check when V is not FU  but has maps from}, but one does need to make some assumptions on the functor $F$ and the categories $\Ccat$ and $\Dcat$ in order for $F^{\ast}$ to be fully faithful. A counter-example is given below that discusses the failure of $F^{\ast}$ to be faithful more precisely, but for the connectivity results below, it simply suffices to prove that $F$ is essentially surjective.
\begin{proposition}\label{Prop: Essential Geomettric morphism is full if functor is full and surj}
Let $F:\Ccat_{/X} \to \Dcat_{/Y}$ be a full and essentially surjective geometrically adhesive functor. Then the functor $F^{\ast}$ in the essential geometric morphism $F:\Shv(\Ccat_{/X},J) \to \Shv(\Dcat_{/Y},\AFJ)$ is full.
\end{proposition}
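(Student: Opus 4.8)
The plan is to exploit that, by Lemma~\ref{Lemma: Pullbacck functor preservves sheaves}, the functor $F^{\ast}$ is nothing more than restriction along $F$, so that $(F^{\ast}\mathscr{F})(U)=\mathscr{F}(FU)$ for every $\AFJ$-sheaf $\mathscr{F}$ and every $U\in\ob\Ccat_{/X}$, and that $\Shv(\Ccat_{/X},J)$ and $\Shv(\Dcat_{/Y},\AFJ)$ sit as full subcategories of the respective presheaf toposes, so that a morphism of sheaves is just a natural transformation of underlying presheaves. Thus to show $F^{\ast}$ is full I would start with two $\AFJ$-sheaves $\mathscr{F},\mathscr{G}$ and a natural transformation $\eta\colon\mathscr{F}\circ F\Rightarrow\mathscr{G}\circ F$, and construct a natural transformation $\widetilde{\eta}\colon\mathscr{F}\Rightarrow\mathscr{G}$ with $\widetilde{\eta}_{FU}=\eta_U$ for every $U$. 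The construction is objectwise: transport $\eta$ across the isomorphisms provided by essential surjectivity of $F$, and then use fullness of $F$ together with naturality of $\eta$ to see this is well posed and natural.

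Concretely, for each object $V$ of $\Dcat_{/Y}$ I would fix, using essential surjectivity, an object $U_V$ of $\Ccat_{/X}$ and an isomorphism $u_V\colon FU_V\xrightarrow{\sim}V$, normalised so that $U_V=U$ and $u_V=\id_{FU}$ whenever $V$ is already of the form $FU$. Remembering that $\mathscr{F}$ and $\mathscr{G}$ are contravariant, I would then set
\[
\widetilde{\eta}_V \;:=\; \mathscr{G}(u_V^{-1})\circ\eta_{U_V}\circ\mathscr{F}(u_V)\ \colon\ \mathscr{F}(V)\longrightarrow\mathscr{G}(V),
\]
where $\mathscr{F}(u_V)\colon\mathscr{F}(V)\to\mathscr{F}(FU_V)$, $\eta_{U_V}\colon\mathscr{F}(FU_V)\to\mathscr{G}(FU_V)$, and $\mathscr{G}(u_V^{-1})\colon\mathscr{G}(FU_V)\to\mathscr{G}(V)$.

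The two things to check are that $\widetilde{\eta}_V$ is independent of the chosen pair $(U_V,u_V)$ and that the family $(\widetilde{\eta}_V)_V$ is natural in $V$; both reduce to the same move. For independence, given a second pair $(U',u')$ the composite $\theta:=(u')^{-1}\circ u_V\colon FU_V\to FU'$ is an isomorphism in $\Dcat_{/Y}$, and since $F$ is full there is a morphism $f\colon U_V\to U'$ with $Ff=\theta$; naturality of $\eta$ at $f$ gives $\eta_{U_V}\circ\mathscr{F}(\theta)=\mathscr{G}(\theta)\circ\eta_{U'}$, and combining this with $u'=u_V\circ\theta^{-1}$, functoriality of $\mathscr{F}$ and $\mathscr{G}$, and the identity $\mathscr{F}(\theta)\circ\mathscr{F}(\theta^{-1})=\mathscr{F}(\theta^{-1}\circ\theta)=\id$ collapses the two candidate formulas for $\widetilde{\eta}_V$ into each other. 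For naturality, given $g\colon V\to V'$ in $\Dcat_{/Y}$ I would write $(u_{V'})^{-1}\circ g\circ u_V=Fh$ for some $h\colon U_V\to U_{V'}$ (again by fullness) and chase the naturality square of $\eta$ at $h$ through $u_V$ and $u_{V'}$ to get $\mathscr{G}(g)\circ\widetilde{\eta}_{V'}=\widetilde{\eta}_V\circ\mathscr{F}(g)$. Finally $F^{\ast}\widetilde{\eta}=\eta$ is immediate from the normalisation $u_{FU}=\id_{FU}$, which gives $\widetilde{\eta}_{FU}=\eta_U$, so $\eta$ lies in the image of $F^{\ast}$ and $F^{\ast}$ is full.

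The step I expect to be the crux is the well-definedness check. The subtlety is that fullness of $F$ produces only a lift $f$ of the transition isomorphism $\theta$, with no guarantee that $f$ is invertible or even unique, so one must be careful that faithfulness of $F$ is never silently used. It goes through precisely because $\theta$ --- equivalently $Ff$ --- is only ever fed through the presheaves $\mathscr{F}$, $\mathscr{G}$ and through $\eta$, so naturality of $\eta$ is exactly the input needed. This is also why the argument yields fullness but not faithfulness of $F^{\ast}$, consistent with the counterexample to faithfulness alluded to above: pinning $\widetilde{\eta}$ down uniquely from its restriction would require $F$ to be surjective on enough morphisms, which is not among the hypotheses.
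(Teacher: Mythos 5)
Your proof is correct and takes essentially the same route as the paper's: define the lift $\widetilde{\eta}_V$ objectwise by transporting $\eta$ along a chosen isomorphism $FU_V \xrightarrow{\sim} V$ supplied by essential surjectivity, then use fullness of $F$ together with naturality of $\eta$ to verify naturality of the lift and conclude $F^{\ast}\widetilde{\eta}=\eta$. Your explicit check that $\widetilde{\eta}_V$ is independent of the chosen pair $(U_V,u_V)$ is a point the paper's proof glosses over, but otherwise the two arguments coincide.
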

\begin{proof}
Recall that since $F$ is full, for all $U, U^{\prime} \in \ob\Ccat_{/X}$ we have that the  map $\Ccat_{/X}(U,U^{\prime}) \to \Dcat_{/Y}(FU,FU^{\prime})$ given by $\phi \mapsto F\phi$ is epic in $\Set$. Now let $\mathscr{F}, \mathscr{G} \in \ob\Shv(\Dcat_{/Y},\AFJ)$ and consider a natural transformation $\alpha:F^{\ast}\mathscr{F} \to F^{\ast}\mathscr{G}$. Then for all $\phi:U  \to U^{\prime}$ in $\Ccat_{/X}$ the diagram
\[
\xymatrix{
(F^{\ast}\mathscr{F})(U) \ar[r]^{\alpha_U} & (F^{\ast}\mathscr{G})(U) \\
(F^{\ast}\mathscr{F})(U^{\prime}) \ar[r]_{\alpha_{U^{\prime}}} \ar[u]^{(F^{\ast}\mathscr{F})(\phi)} & (F^{\ast}\mathscr{G})(U^{\prime}) \ar[u]_{(F^{\ast}\mathscr{G})(\phi)}	
}
\]
which is equivalent to
\[
\xymatrix{
\mathscr{F}(FU) \ar[r]^{\alpha_U} & \mathscr{G}(FU) \\
\mathscr{F}(FU^{\prime}) \ar[u]^{\mathscr{F}(F\phi)} \ar[r]_{\alpha_U} & \mathscr{G}(FU^{\prime}) \ar[u]_{\mathscr{G}(F\phi)}	
}
\]
commutes in $\Set$. 

To prove that $F^{\ast}$ is full, we simply must find a lift of $\alpha$, i.e., a natural transformation $\beta:\mathscr{F} \to \mathscr{G}$ in such that $F^{\ast}\beta = \alpha$. To do this, we will construct $\beta$ in two steps: First, define $\beta_{FU}:\mathscr{F}(FU) \to \mathscr{G}(FU)$ by setting
\[
\beta_{FU} := \alpha_U.
\]
Now assume that $V \in \ob\Dcat_{/Y}$ and find a $U \in \ob\Ccat_{/X}$ such that $V \cong FU$; this is possible by the essential surjectivity of $F$. Let $\psi:FU \to V$ be any fixed isomorphism between $V$ and $FU$ and define $\beta_V$ by the rule
\[
\beta_V := \mathscr{G}(\psi^{-1}) \circ \alpha_U \circ \mathscr{F}\psi.
\]
Now let $\tilde{\phi}:V \to V^{\prime}$ be a morphism in $\Dcat_{/Y}$, where $V^{\prime} \cong FU^{\prime}$ for some $U^{\prime} \in \ob\Ccat_{/X}$ through $\psi^{\prime}:FU^{\prime} \to V^{\prime}$, and consider the commuting diagram
\[
\xymatrix{
\mathscr{F}V \ar[r]^{\mathscr{F}\psi} & \mathscr{F}(FU) \\
\mathscr{F}V^{\prime} \ar[u]^{\mathscr{F}\tilde{\phi}} \ar[r]_{\mathscr{F}\psi^{\prime}} & \mathscr{F}(FU^{\prime}) \ar@{-->}[u]_{\exists\lambda} 
}
\]
in $\Set$. Note that $\lambda$ exists because since $\mathscr{F}\psi^{\prime}$ is an isomorphism, a direct calculation with the morphism
\[
\lambda := \mathscr{F}\psi \circ \mathscr{F}\tilde{\phi} \circ \mathscr{F}(\psi^{\prime})^{-1}
\]
shows that $\lambda \circ \mathscr{F}\psi^{\prime} = \mathscr{F}\psi \circ \mathscr{F}\tilde{\phi}$. However, by the fullness of $F$, there exists a morphism $\theta:U \to U^{\prime}$ in $\Ccat_{/X}$ such that 
\[
\lambda = \mathscr{F}(F\theta) = \mathscr{F}\left( (\psi^{\prime})^{-1} \circ \tilde{\phi} \circ \psi\right).
\]
This shows that the diagram
\[
\xymatrix{
	\mathscr{F}V \ar[r]^{\mathscr{F}\psi} & \mathscr{F}(FU) \\
	\mathscr{F}V^{\prime} \ar[u]^{\mathscr{F}\tilde{\phi}} \ar[r]_{\mathscr{F}\psi^{\prime}} & \mathscr{F}(FU^{\prime}) \ar@{-->}[u]_{\exists\lambda} 
}
\]
commutes in $\Set.$ Dually, we have that the diagram
\[
\xymatrix{
\mathscr{G}(FU) \ar[r]^{\mathscr{G}(\psi^{-1})} & \mathscr{G}(V) \\
\mathscr{G}(FU^{\prime}) \ar[u]^{\mathscr{F}(F\theta)} \ar[r]_{\mathscr{G}\big((\psi^{\prime})^{-1}\big)} & \mathscr{G}V^{\prime} \ar[u]_{\mathscr{G}\tilde{\phi}}
}
\]
commutes in $\Set$; thus, since the diagram
\[
\xymatrix{
\mathscr{F}(FU) \ar[r]^{\alpha_U} & \mathscr{G}(FU) \\	
\mathscr{F}(FU^{\prime}) \ar[r]_{\alpha_{U^{\prime}}} \ar[u]^{\mathscr{F}(F\theta)} & \mathscr{G}(FU^{\prime}) \ar[u]_{\mathscr{G}(F\theta)}
}
\]
commutes by assumption, it follows that every cell in the diagram
\[
\xymatrix{
	\mathscr{F}V \ar[r]^{\mathscr{F}\psi} & \mathscr{F}(FU) \ar[r]^{\alpha_U} & \mathscr{G}(FU) \ar[r]^{\mathscr{G}(\psi^{-1})} & \mathscr{G}V \\
	\mathscr{F}V^{\prime} \ar[r]_-{\mathscr{F}(\psi^{\prime})} \ar[u]^{\mathscr{F}\tilde{\phi}} & \mathscr{F}(FU^{\prime}) \ar[r]_{\alpha_{U^{\prime}}} \ar[u]^{\mathscr{F}(F\theta)} & \mathscr{G}(FU^{\prime}) \ar[r]_{\mathscr{G}\big((\psi^{\prime})^{-1}\big)} \ar[u]_{\mathscr{G}(F\theta)} & \mathscr{G}V^{\prime} \ar[u]_{\mathscr{G}\tilde{\phi}}
}
\]
commutes and hence the outer rectangle commutes. But since we have by definition that $\beta_V = \mathscr{G}(\psi^{-1}) \circ \alpha_U \circ \mathscr{F}\psi$, it follows that the outer rectangle contracts to
\[
\xymatrix{
\mathscr{F}V \ar[r]^{\beta_V} & \mathscr{G}V \\
\mathscr{F}V^{\prime} \ar[u]^{\mathscr{F}\tilde{\phi}} \ar[r]_{\beta_{V^{\prime}}} & \mathscr{G}V^{\prime} \ar[u]_{\mathscr{G}\tilde{\phi}}	
}
\]
which proves that $\beta:\mathscr{F} \to \mathscr{G}$ is a natural transformation. Then by construction we can show that
\[
F^{\ast}\beta = \alpha,
\]
which proves that $F^{\ast}$ is full, as was desired.
\end{proof}
\begin{remark}
With the Lemma \ref{Lemma: Sufficient Check when V is not FU  but has maps from} and Corollary \ref{Cor: Suffices to prove AFJ sheaf on F of J covers} together suggest that in order to have the faithfulness of  $F^{\ast}$ that may be suggested by Lemma \ref{Lemma: Pullbacck functor preservves sheaves}, it is necessary to have the functor $F:\Ccat \to \Dcat$ satisfy the condition that for all $V \in\Dcat_0.\exists\,X\in\Ccat_0.(\Dcat(V,FX)\ne \emptyset)\lor(\Dcat(FX,V) \ne \emptyset)$. This may be made explicit by the following counter-example: 
	
Let $\Ccat = \Sch_{/\Spec \Fp}$ and let $\Dcat = \FSch_{/\Spec \Z_p} \cup \lbrace \Spec \Z_{\ell} \rbrace$, where $\gcd(\ell,p) = 1$ and with the hom-set $\Dcat(\Spec \Z_{\ell}, \Spec \Z_{\ell}) = \lbrace \id_{\Spec \Z_{\ell}}\rbrace$. Then define a functor $h:\Cat \to \Dcat$ by $h(X)  = \hfrak X$ for all $X \in \Ccat_0$ and observe that $\hfrak$ is geometrically adhesive. However, $F^{\ast}$ is not faithful as a functor of sheaf categories because you can choose any set on a presheaf over $\Spec \Z_{\ell}$ which is an $\mathcal{A}_{\hfrak}^{J}$-sheaf over the $\FSch_{/\Spec \Z_p}$ component and get an $\mathcal{A}_{h}^{J}$-sheaf. In particular, by taking the sheaves $\mathscr{S}$ and $\mathscr{T}$ to be defined by
\[
\mathscr{T}(U) = \begin{cases}
\lbrace \ast \rbrace & {\rm if}\, U \in \FSch_{/\Spec \Z_p} \\
\lbrace 0, 1 \rbrace & {\rm Else};
\end{cases}
\]
and
\[
\mathscr{S}(U) = \lbrace \ast \rbrace
\]
for all $U \in \Dcat$. Then $\Shv(\Dcat,\mathcal{A}_{h}^{J})(\mathscr{S},\mathscr{T}) = \lbrace 0,1 \rbrace$, where the labels come from which element the natural transformation picks out on the map $\lbrace \ast \rbrace \to \lbrace 0, 1 \rbrace$ (so the sections over $\Spec \Z_{\ell}$). However, since the pullback $h^{\ast}$ only sees sheaves that arise as taking $h(-)$ of $\Fp$-schemes, it is straightforward to calculate that
\[
h^{\ast}\mathscr{S}(X) = \lbrace \ast \rbrace = h^{\ast}\mathscr{T}(X)
\]
for all schemes $X$ in $\Sch_{/\Spec \Fp}$ (together with the only possible morphisms). In particular, from this construction it follows that
\[
\Shv(\Ccat,J)(h^{\ast}\mathscr{S},h^{\ast}\mathscr{T}) = \lbrace \id_{h^{\ast}\mathscr{S}} \rbrace \not\cong \lbrace 0, 1 \rbrace = \Shv(\Dcat, \mathcal{A}_h^{J})(\mathscr{S},\mathscr{T}),
\]
which shows that $h^{\ast}$ is not faithful.
\end{remark}
\begin{proposition}\label{Prop: Equiv implies that Fast is fully faithful}
If $F:\Ccat_{/X} \to \Dcat_{/Y}$ is a geometrically adhesive, full, and essentially surjective functor, then $F^{\ast}$ is fully faithful.
\end{proposition}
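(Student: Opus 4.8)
The plan is to deduce this immediately from Proposition \ref{Prop: Essential Geomettric morphism is full if functor is full and surj} together with a short separate verification of faithfulness. Proposition \ref{Prop: Essential Geomettric morphism is full if functor is full and surj} already establishes, under precisely these hypotheses, that $F^{\ast}$ is full; so the only remaining task is to check that $F^{\ast}$ is faithful, after which fullness and faithfulness combine to give the claim.

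For faithfulness I would argue as follows. Suppose $\mathscr{F}, \mathscr{G}$ are $\AFJ$-sheaves on $\Dcat_{/Y}$ and $\alpha, \beta \colon \mathscr{F} \to \mathscr{G}$ are natural transformations with $F^{\ast}\alpha = F^{\ast}\beta$. Unwinding the definition $F^{\ast}(-) = (-)\circ F$, the component of $F^{\ast}\alpha$ at an object $U$ of $\Ccat_{/X}$ is exactly $\alpha_{FU}\colon \mathscr{F}(FU) \to \mathscr{G}(FU)$, so the hypothesis says $\alpha_{FU} = \beta_{FU}$ for every $U \in \ob\Ccat_{/X}$. Now let $V$ be an arbitrary object of $\Dcat_{/Y}$. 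By essential surjectivity of $F$, pick $U \in \ob\Ccat_{/X}$ and an isomorphism $\psi \colon FU \to V$; naturality of $\alpha$ (and of $\beta$) along $\psi$, together with the fact that $\mathscr{F}\psi$ and $\mathscr{G}\psi$ are invertible since $\psi$ is, gives
\[
\alpha_V = (\mathscr{G}\psi)^{-1} \circ \alpha_{FU} \circ \mathscr{F}\psi, \qquad \beta_V = (\mathscr{G}\psi)^{-1} \circ \beta_{FU} \circ \mathscr{F}\psi.
\]
Since $\alpha_{FU} = \beta_{FU}$, this forces $\alpha_V = \beta_V$; as $V$ was arbitrary, $\alpha = \beta$. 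Hence $F^{\ast}$ is faithful, and being also full it is fully faithful.

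I do not anticipate a genuine obstacle here; the only point to be careful about is that essential surjectivity (not merely fullness) is what lets one transport the equality of components from objects of the form $FU$ to all objects of $\Dcat_{/Y}$, and that the sheaf condition plays no role in the faithfulness half — it is needed only so that everything takes place inside $\Shv(\Dcat_{/Y},\AFJ)$ and Proposition \ref{Prop: Essential Geomettric morphism is full if functor is full and surj} can be invoked for fullness.
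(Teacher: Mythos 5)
Your proposal is correct and follows essentially the same route as the paper: fullness is quoted from Proposition \ref{Prop: Essential Geomettric morphism is full if functor is full and surj}, and faithfulness is obtained by using essential surjectivity to choose an isomorphism $\psi\colon FU \to V$ and then conjugating the components at $FU$ by $\mathscr{F}\psi$ and $(\mathscr{G}\psi)^{-1}$ via naturality, exactly as in the paper's computation. Your remark that the sheaf condition plays no role in the faithfulness half is also consistent with the paper's argument.
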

\begin{proof}
We have already seen from Proposition \ref{Prop: Essential Geomettric morphism is full if functor is full and surj} that $F^{\ast}$ is full; we therefore only need to show that $F^{\ast}$ is faithful. To do this, assume that there is a natural transformation $\gamma:\mathscr{F} \to \mathscr{G}$ such that $F^{\ast}\beta = F^{\ast}\gamma$ for some $\alpha:F^{\ast}\mathscr{F} \to F^{\ast}\mathscr{F}$. Since $F^{\ast}\beta = \alpha = F^{\ast}\gamma$, we have that $\gamma_{FU} = \alpha_U = \beta_{FU}$ for all $U \in \ob\Ccat_{/X}$. Now for each $V \in \ob\Dcat_{/Y}$, find a $U \in \ob\Ccat_{/X}$ and an isomorphism $\phi:FU \to V$. Then since $\beta$ and $\gamma$ are natural transformations, we have that the diagrams
\[
\xymatrix{
\mathscr{F}V \ar[d]_{\mathscr{F}\psi} \ar[r]^{\beta_V} & \mathscr{G}V \ar[d]^{\mathscr{G}\psi} \\
\mathscr{F}(FU)  \ar[r]_{\beta_{FU}}	& \mathscr{G}(FU)
}
\]
and
\[
\xymatrix{
\mathscr{F}V \ar[d]_{\mathscr{F}\psi} \ar[r]^{\gamma_V} & \mathscr{G}V \ar[d]^{\mathscr{G}\psi} \\
\mathscr{F}(FU)  \ar[r]_{\gamma_{FU}}	& \mathscr{G}(FU)	
}
\]
both commute. But then we calculate that
\begin{align*}
\beta_V &= \id_{\mathscr{G}V} \circ \beta_V = \mathscr{G}\psi^{-1} \circ \mathscr{G}\psi \circ \beta_V = \mathscr{G}\psi^{-1} \circ \beta_{FU} \circ \mathscr{F}\psi  \\ 
&= \mathscr{G}\psi^{-1} \circ \gamma_{FU} \circ \mathscr{F}\psi = \mathscr{G}\psi^{-1} \circ \mathscr{G}\psi \circ \gamma_V = \id_{\mathscr{G}V} \circ \gamma_V\\ 
&= \gamma_V
\end{align*}
so it follows that $\beta = \gamma$ and hence $F^{\ast}$ is fully faithful.
\end{proof}

We would now like to show that there is an analogous version of the above proposition that holds even in the case that $F$ is fully faithful but not essentially surjective. It will tell us that in the case in which the functor $F$ is fully faithful, we can infer that both $F^{\ast}$ reflects isomorphisms, as well as the fact that $F^{\ast}$ is fully faithful. It uses the condition stated explicitly in Lemma \ref{Lemma: Sufficient Check when V is not FU  but has maps from}: That for every object $V$ of $\Codom F$ there exists an object $X$ of $\Dom F$ for which $\Dcat(V,FX) \ne \emptyset$ or $\Dcat(FX,V) \ne \emptyset$.
\begin{proposition}\label{Prop: Fast is iso reflecting}
Assume that $F:\Ccat \to \Dcat$ is a fully faithful, geometrically adhesive functor and that for all objects $V$ of $\Dcat$ there exists an object $X$ of $\Ccat$ for which $\Dcat(FX,V) \ne \emptyset$ or $\Dcat(V,FX) \ne \emptyset$. Then $F^{\ast}$ is isomorphism reflecting.
\end{proposition}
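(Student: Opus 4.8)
The plan is to show that $F^{\ast}$ reflects isomorphisms by working componentwise. Recall that $\Shv(\Dcat,\AFJ)$ is a full subcategory of the presheaf topos $[\Dcat^{\operatorname{op}},\Set]$, so a morphism $\alpha:\mathscr{F}\to\mathscr{G}$ of $\AFJ$-sheaves is an isomorphism precisely when each component $\alpha_V:\mathscr{F}(V)\to\mathscr{G}(V)$ is a bijection (given componentwise bijectivity, the inverse is automatically natural and hence a morphism of sheaves). Since $F^{\ast}\mathscr{F}=\mathscr{F}\circ F$, the hypothesis that $F^{\ast}\alpha$ be an isomorphism says exactly that $\alpha_{FX}$ is a bijection for every $X\in\ob\Ccat$. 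So suppose $F^{\ast}\alpha$ is an isomorphism; I must prove $\alpha_V$ is bijective for every $V\in\ob\Dcat$. If $V\cong FX$ for some $X$, then the naturality square of $\alpha$ along the isomorphism $FX\cong V$ exhibits $\alpha_V$ as conjugate to $\alpha_{FX}=(F^{\ast}\alpha)_X$, which is bijective; so the real content is the objects of $\Dcat$ not in the essential image of $F$.

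For such a $V$, the running hypothesis provides a morphism $h:V\to FX$ or a morphism $g:FX\to V$ for some object $X$ of $\Ccat$, and the idea is to use this single morphism together with the fact that $\AFJ$ is a Grothendieck topology \emph{generated} by the functorial images of $J$-covers (Definition \ref{Definition: Adhesive Site}, Proposition \ref{Prop: Generated pretop generates AFJ top}), hence pullback-stable. Consider first the case $h:V\to FX$. For every $J$-covering sieve $C$ of $X$ the pullback $h^{\ast}(FC)$ is an $\AFJ$-covering sieve of $V$; moreover, by Lemma \ref{Lemma: Sufficient Check when V is not FU  but has maps from} and Corollary \ref{Cor: Suffices to prove AFJ sheaf on F of J covers}, the sheaf condition for $\mathscr{F}$ and $\mathscr{G}$ along covers of this shape is entirely controlled by the equalizer diagrams attached to the $F$-images of $J$-covers of $X$, whose terms are the objects $FU_i$ and — using Proposition \ref{Prop: Adhesive Functors Preserve Open Immersions} and the fact that geometrically adhesive functors preserve intersections, so $FU_i\cap FU_j=F(U_i\cap U_j)$ — the intersections $F(U_i\cap U_j)$. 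All of these lie in the essential image of $F$, where $\alpha$ is already known to be bijective. Since $\alpha$ commutes with every restriction map, comparing the resulting equalizer presentations of $\mathscr{F}(V)$ and $\mathscr{G}(V)$ (a two-out-of-three argument for equalizers / short diagram chase of exactly the kind carried out in the proof of Lemma \ref{Lemma: Sufficient Check when V is not FU  but has maps from}) forces $\alpha_V$ to be a bijection. The case $g:FX\to V$ is handled dually, analysing how $\AFJ$-covers of $V$ and their fibre products are dominated through $g$ by $F$-images of $J$-covers.

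The step I expect to be the main obstacle is making the last paragraph uniform and rigorous when $V$ carries \emph{only} the trivial $\AFJ$-covering sieve (so the bare sheaf condition over $V$ is vacuous) and when the fibre-product terms appearing in covers of $V$ are themselves not manifestly in the essential image of $F$. In the first situation one must squeeze the rigidity of $\mathscr{F}(V)$ out of the morphism $h$ (resp.\ $g$) alone: pullback-stability forces $h$ to lie in \emph{every} $\AFJ$-covering sieve of $FX$ (since $h^{\ast}(FC)$ must be maximal), and one leverages this — iterating over all $J$-covers of $X$ — to pin $\mathscr{F}(h)$ and $\mathscr{G}(h)$ down in terms of values of the sheaves on the image of $F$. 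In the second situation a careful, possibly inductive, bookkeeping is needed to keep every object occurring in the relevant equalizers reducible to objects where $\alpha$ is already bijective. This is precisely the technical heart, and it is exactly the reason the hypothesis on $\Dcat(V,FX)$ and $\Dcat(FX,V)$ is imposed: the counterexample in the remark preceding the proposition shows that an object of $\Dcat$ sharing no morphism with the image of $F$ can carry arbitrary extra local data, breaking even faithfulness of $F^{\ast}$, so some connectedness condition of this type is unavoidable.
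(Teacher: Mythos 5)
Your componentwise reduction is fine as far as it goes: a map of $\AFJ$-sheaves is invertible iff it is bijective objectwise, $F^{\ast}\alpha$ being invertible says exactly that $\alpha_{FX}$ is bijective for all $X$, and conjugating along an isomorphism $V\cong FX$ handles the essential image. But the remaining case --- objects $V$ of $\Dcat$ not in the essential image of $F$ --- is precisely where all the content lies, and the strategy you sketch there does not close, as you yourself suspect. The $\AFJ$-sheaf condition, as Corollary \ref{Cor: Suffices to prove AFJ sheaf on F of J covers} makes explicit, only imposes equalizer conditions whose terms are $\mathscr{F}(FU)$, $\mathscr{F}(FU_i)$ and $\mathscr{F}(FU_i\cap FU_j)$; it places no constraint on $\mathscr{F}(V)$ beyond functoriality. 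When the only $\AFJ$-covering sieve of $V$ is the maximal one (the typical situation, since $\AFJ$ is generated by sieves on objects of the form $FU$), a single arrow $h:V\to FX$ plus naturality of $\alpha$ yields only a commuting square involving $\mathscr{F}(h)$, $\mathscr{G}(h)$ and the bijection $\alpha_{FX}$, and this forces neither injectivity nor surjectivity of $\alpha_V$: one may enlarge $\mathscr{G}(V)$ at will, still have an $\AFJ$-sheaf, and still have a morphism $\alpha$ with $F^{\ast}\alpha$ invertible --- the freedom exhibited in the counterexample remark before the proposition is not removed by the mere existence of one arrow to or from the image. Your observation that pullback-stability forces $h$ to lie in every covering sieve of $FX$ constrains how $h$ factors, not the fibres of $\alpha_V$, so it does not repair this. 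Note also that the paper's only tool for off-image objects, Lemma \ref{Lemma: Sufficient Check when V is not FU  but has maps from}, requires a cover of $V$ refined by morphisms $FU_i\to V$, i.e.\ it speaks to the case $\Dcat(FX,V)\ne\emptyset$; it gives nothing in the case where $V$ only maps \emph{to} the image, which is the case your main paragraph treats. So this is a genuine gap, not a deferred verification.

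Separately, your reading of the statement differs from the paper's. The paper's proof starts from an abstract isomorphism $F^{\ast}\mathscr{F}\cong F^{\ast}\mathscr{G}$ of sheaves and argues (via the equalizer diagrams attached to the sieves $FS$, full faithfulness of $F$, and an appeal to the connectivity hypothesis for objects off the image) that $\mathscr{F}\cong\mathscr{G}$; and it is this object-level form that is invoked later, in part (2) of the lemma on $\Fi\circ F^{\ast}$. You instead prove conservativity on morphisms. These are inequivalent properties (neither implies the other in general), so even a completed version of your argument would establish a different statement from the one the paper proves and subsequently uses; conversely, the paper's proof is itself terse at exactly the same off-image step, but it is organized around comparing the two equalizer presentations over the $FS$'s rather than around a given morphism $\alpha$.
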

\begin{proof}
Let $\mathscr{F}$ and $\mathscr{G}$ be $\AFJ$-sheaves on $\Dcat$ such that $F^{\ast}\mathscr{F} \cong F^{\ast}\mathscr{G}$ as $J$-sheaves on $\Ccat$. The for all objects $U$ of $\Ccat$ and for all morphisms $\phi$ of $\Ccat$ we get: From the isomorphism
\[
F^{\ast}\mathscr{F}(U) \cong F^{\ast}\mathscr{G}(U),
\]
we have that
\[
\mathscr{F}(FU) \cong \mathscr{G}(FU);
\]
and from the isomorphism
\[
F^{\ast}\mathscr{F}(\phi) \cong F^{\ast}\mathscr{G}(\phi)
\]
we also have
\[
\mathscr{F}(F\phi) \cong \mathscr{G}(F\phi).
\]
Furthermore, because both $F^{\ast}\mathscr{F}$ and $F^{\ast}\mathscr{G}$ are $J$-sheaves, it follows that for all $J$-sieves $S \in J(U)$, the diagram
\[
\xymatrix{
(F^{\ast}\mathscr{F})(U) \ar[r] & \prod\limits_{f \in S} (F^{\ast}\mathscr{F})(\Dom f) \ar@<.5ex>[rr] \ar@<-.5ex>[rr] & & \prod\limits_{\substack{f \in S, g \in \Ccat_1 \\ f \circ g \in \Ccat_1}} (F^{\ast}\mathscr{F})(\Dom g)
}
\]
is an equalizer diagram, where the $f \in S$ are sieving morphisms; similarly, the same diagram with $\mathscr{G}$ replaced with $\mathscr{F}$ is an equalizer as well. Moreover, since the definition of the $\AFJ$-topology shows that the $\AFJ$ covering sieves are generated by the sets $F(S)$, applying Corollary \ref{Cor: Suffices to prove AFJ sheaf on F of J covers} and using the hypotheses in the proposition to avoid technical malfunctions that occur off the connected component\footnote{By which we mean the connected components of the category as a graph.} generated by the essential image of $F$ allows us to check the isomorphism types of $\mathscr{F}$ and $\mathscr{G}$ along the $FS$'s. However, applying the definition of the functor $F^{\ast}$ shows that the diagram above is equivalent to the diagram
\[
\xymatrix{
\mathscr{F}(FU) \ar[r] & \prod\limits_{f \in S} \mathscr{F}(F\Dom f) \ar@<.5ex>[rr] \ar@<-.5ex>[rr] & & \prod\limits_{\substack{f \in S, g \in \Ccat_1 \\ f \circ g \in \Ccat_1}} \mathscr{F}(F\Dom g)
}
\]
Using the fully faithfulness of $F$ together with the isomoprhisms $F \Dom f \cong \Dom (Ff)$ then shows that for every set $FS$, $\mathscr{F}$ and $\mathscr{G}$ are isomorphic on the desired covers. This implies that $\mathscr{F} \cong \mathscr{G}$ and completes the proof of the proposition.
\end{proof}

We now need a lonely arithmetic geometric result for use later in this paper.
\begin{lemma}\label{Lemma: Greenberg pullback is fully faithful}
Let $R/\Z_p$ be an integral extension with residue field $k$ and let $\hfrak \dashv \Gr:\Sch_{/\Spec k} \to \FSch_{/\Spec R}$ be the Greenberg adjunction. Then the pullback functor
	\[
	\hfrak^{\ast}:\Shv(\FSch_{/\Spec R},\mathcal{A}_{\hfrak}^{J}) \to \Shv(\Sch_{/\Spec k},J)
	\]
	is fully faithful.
\end{lemma}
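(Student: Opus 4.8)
The plan is to deduce this from the general machinery already set up, in particular from Proposition \ref{Prop: Equiv implies that Fast is fully faithful} and Proposition \ref{Prop: Fast is iso reflecting}, by checking that the Greenberg functor $\hfrak:\Sch_{/\Spec k} \to \FSch_{/\Spec R}$ has the requisite structural properties. First I would recall that $\hfrak$ is geometrically adhesive by Corollary \ref{Cor: Greenberg functor is adhesive}, so the essential geometric morphism $\hfrak:\Shv(\Sch_{/\Spec k},J) \to \Shv(\FSch_{/\Spec R},\mathcal{A}_{\hfrak}^{J})$ of Theorem \ref{Theorem: The pullback/pushforward adjunction} exists and $\hfrak^{\ast}$ is given by precomposition $\mathscr{F} \mapsto \mathscr{F} \circ \hfrak$, landing in $\Shv(\Sch_{/\Spec k},J)$ by Lemma \ref{Lemma: Pullbacck functor preservves sheaves}.

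The crux is to verify that $\hfrak$ is fully faithful as a functor $\Sch_{/\Spec k} \to \FSch_{/\Spec R}$. Fullness and faithfulness here come from the adjunction $\hfrak \dashv \Gr$ together with the fact that the unit $\eta_X: X \to \Gr(\hfrak X)$ is an isomorphism for each $k$-scheme $X$: indeed, on the level of the underlying space $\hfrak$ does nothing, and on structure sheaves $\Gr$ undoes the Witt-vector functor $W$ in the sense that $\Gr(\hfrak X)$ has structure sheaf obtained by reducing $W\CalO_X$ back to $\CalO_X$ at each finite level and passing to the colimit, recovering $\CalO_X$. Concretely, since the ramification degree is $1$ we have $\hfrak\CalO_X = W\CalO_X$ and the counit/unit computation of \cite{RevisGreen} (or a direct ring-theoretic check using $W_{n+1}(A)/V W_n(A) \cong A$) shows $\Gr \hfrak \cong \id_{\Sch_{/\Spec k}}$. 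A right adjoint whose composite with its left adjoint on one side is naturally isomorphic to the identity forces the left adjoint to be fully faithful: for $k$-schemes $X, Y$ one has
\[
\Sch_{/\Spec k}(X,Y) \cong \Sch_{/\Spec k}(X, \Gr\hfrak Y) \cong \FSch_{/\Spec R}(\hfrak X, \hfrak Y),
\]
the last isomorphism by the adjunction, and this composite is precisely the action of $\hfrak$ on hom-sets. Hence $\hfrak$ is fully faithful.

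Now I would invoke the extension of Proposition \ref{Prop: Fast is iso reflecting} (and the argument pattern of Proposition \ref{Prop: Equiv implies that Fast is fully faithful}) to a fully faithful but not necessarily essentially surjective $F$: the hypothesis needed there is that every object $V$ of $\FSch_{/\Spec R}$ admits a morphism to or from some $\hfrak X$. This holds because $\FSch_{/\Spec R}$ is a slice over $\Spec R$ and $\hfrak(\Spec k) = \Spf R$ (or, in the unramified case at hand, $\Spf R$ with its trivial filtration), so every formal scheme $V$ over $\Spec R$ has its structure morphism $V \to \Spec R$, and $\Spec R$ receives a canonical map from $\Spf R = \hfrak(\Spec k)$; composing, or d, when convenient, using that $\hfrak$ of the empty scheme and of points land suitably, one exhibits the required nonempty hom-set. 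With the connectivity hypothesis satisfied, Corollary \ref{Cor: Suffices to prove AFJ sheaf on F of J covers} lets one check natural transformations and equalizer conditions solely on objects of the form $\hfrak X$ and on functorial images $F(S)$ of $J$-sieves; the argument of Proposition \ref{Prop: Essential Geomettric morphism is full if functor is full and surj} then constructs, for any $\alpha: \hfrak^{\ast}\mathscr{F} \to \hfrak^{\ast}\mathscr{G}$, a lift $\beta:\mathscr{F}\to\mathscr{G}$ by setting $\beta_{\hfrak X} := \alpha_X$ and extending along the (now not iso but still hom-nonempty) comparison maps, while the faithfulness computation of Proposition \ref{Prop: Equiv implies that Fast is fully faithful} goes through verbatim since it only used $\beta_{\hfrak X} = \gamma_{\hfrak X}$ and naturality.

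The main obstacle I anticipate is the bookkeeping in the ``not essentially surjective'' version of the fullness argument: when $V$ is a formal scheme not of the form $\hfrak X$, one cannot define $\beta_V$ by conjugating $\alpha_X$ by an isomorphism as in Proposition \ref{Prop: Essential Geomettric morphism is full if functor is full and surj}, so one must instead use that $\mathscr{F}$ and $\mathscr{G}$ are $\mathcal{A}_{\hfrak}^{J}$-sheaves and that, by the remark following Lemma \ref{Lemma: Sufficient Check when V is not FU but has maps from}, the sections $\mathscr{F}(V)$ and $\mathscr{G}(V)$ are determined as equalizers over covers refined by images $\hfrak X_i \to V$; the value $\beta_V$ is then forced by gluing the $\alpha_{X_i}$, and the bulk of the work is checking this is well-defined and natural. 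This is exactly the content that Lemma \ref{Lemma: Sufficient Check when V is not FU but has maps from} and Corollary \ref{Cor: Suffices to prove AFJ sheaf on F of J covers} were proved to handle, so the remaining verification is routine but somewhat lengthy, and I would present it by reduction to those two results rather than by repeating the diagram chases.
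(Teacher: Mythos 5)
The central gap is your claim that the unit $\eta_X\colon X \to \Gr(\hfrak X)$ is an isomorphism, so that $\hfrak$ itself is fully faithful; everything else in your argument leans on this. The isomorphism $W_{n+1}(A)/VW_n(A) \cong A$ only exhibits $A$ as a quotient of $W_{n+1}(A)$ and says nothing about whether every $R$-algebra map between Witt rings is induced by a $k$-algebra map, and at finite level the claim is in fact false: for $p=2$, $k=\mathbb{F}_2$, the assignment $(f_0,f_1) \mapsto [f_0(0)] + 2\bigl(f_1(0)+f_0'(0)\bigr)$ is a $W_2(\mathbb{F}_2)=\Z/4$-algebra homomorphism $W_2(\mathbb{F}_2[x]) \to W_2(\mathbb{F}_2)$ (the derivation term exactly absorbs the Witt addition carries), and it is not $W_2(f)$ for any $\mathbb{F}_2$-algebra map $f\colon\mathbb{F}_2[x]\to\mathbb{F}_2$, since it sends $[x]\mapsto 2$. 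Hence $\Gr_2(h_2\A^1_k)$ has more $k$-points than $\A^1_k$ and the level-$2$ unit is not an isomorphism; nothing in your proposal rules out the analogous phenomenon at full level, so $\Gr\hfrak\cong\id$ cannot be asserted without a real argument. Note that the paper's own (admittedly terse) proof makes no use of the unit: it uses the counit $\epsilon_{\mathfrak{X}}\colon\hfrak(\Gr\mathfrak{X})\to\mathfrak{X}$, whose role is only to guarantee that \emph{every} formal scheme receives a morphism from an object in the image of $\hfrak$ (the connectivity hypothesis of Proposition \ref{Prop: Fast is iso reflecting} and of assumption (A3)), together with the fact that $\mathcal{A}_{\hfrak}^{J}$ is generated by images of $J$-covers and with Witt-vector structure.

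Two further problems. First, your verification of the connectivity condition is broken as written: the structure morphism $V \to \Spec R$ and the map $\Spf R \to \Spec R$ point the same way and cannot be composed to give a morphism between $V$ and $\hfrak(\Spec k)=\Spf R$, and $\Spec R$ itself is not in the essential image of $\hfrak$ (the paper's remark after Theorem \ref{Theorem: The pullback/pushforward adjunction} notes $\hfrak(\Spec\Fp)=\Spf\Z_p\ne\Spec\Z_p$); the counit $\epsilon_{\mathfrak{X}}$ is the correct device here and is precisely what the paper invokes. Second, even if $\hfrak$ were fully faithful, the paper contains no statement of the form ``$F$ fully faithful plus connectivity implies $F^{\ast}$ fully faithful'': Proposition \ref{Prop: Fast is iso reflecting} only gives isomorphism reflection, and Propositions \ref{Prop: Essential Geomettric morphism is full if functor is full and surj} and \ref{Prop: Equiv implies that Fast is fully faithful} require essential surjectivity, which $\hfrak$ lacks. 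So the ``extension'' you appeal to --- defining $\beta_V$ by gluing the $\alpha_{X_i}$ over covers refined by maps $\hfrak X_i \to V$ and checking well-definedness and naturality --- is not a routine citation but is the actual mathematical content of the lemma, and it is exactly where the Witt-vector input alluded to in the paper's proof would have to enter.
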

\begin{proof}
This follows from the fact that the topology $\mathcal{A}_{\hfrak}^{J}$ is lifted from $J$, from the fact that $\hfrak$ is a left adjoint so there is always the canonical map $\epsilon_{\mathfrak{X}}:\hfrak(\Gr(\mathfrak{X})) \to \mathfrak{X}$ for any formal scheme $\mathfrak{X}$ in $\FSch_{/\Spec R}$, and from the structure of rings of Witt vectors and algebras of Witt vectors over rings of Witt vectors.
\end{proof}

\section{The Adhesive Fundamental Group}
As a point of notation, if $\CalE$ is a Grothendieck topos, then we will write $\CalE_{lcf}$ for the full subtopos of locally constant, locally finite objects in $\CalE$. We will be making a study of these categories based on the restriction of the essential geometric morphism $F:\Shv(\Ccat,J) \to \Shv(\Dcat,\AFJ)$ induced by a geometrically adhesive functor $F:\Ccat \to \Dcat$.

Throughout this section we make the following assumptions:
\begin{center}
\begin{enumerate}
	\item[A1.] $F:\Ccat \to \Dcat$ is geometrically adhesive and the pullback $F^{\ast}$ is fully faithful;
	\item[A2.] $\Shv(\Ccat,J)_{lcf}$ is a Galois category with fibre functor $\Fi:\Shv(\Ccat,J)_{lcf} \to \FinSet$;
	\item[A3.] For every $V \in \Dcat_0$, there exists an $X \in \Ccat_0$ for which $\Dcat(V,FX) \ne \emptyset$ or $\Dcat(FX,V) \ne \emptyset$.
\end{enumerate}
\end{center} 
We will use this set up to study the fundamental group on $\Shv(\Dcat,\AFJ)$ with the provision that there is a fundamental group of $\Shv(\Ccat,J)_{lcf}$ with which we can work. This will culminate with us showing that in some situations (and in particular in the case of the Greenberg Transform) that the fundamental groups coincide. However, before we do that we must show that we can still use the techniques afforded by the essential geometric morphism which are, fittingly, essential to us.
\begin{proposition}
The functor $F^{\ast}:\Shv(\Dcat,\AFJ) \to \Shv(\Ccat,J)$ restricts to a functor $F^{\ast}:\Shv(\Dcat,\AFJ)_{lcf} \to \Shv(\Ccat,J)_{lcf}$.
\end{proposition}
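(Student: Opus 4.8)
The plan is to use that $F^{\ast}$ is the inverse image part of the essential geometric morphism of Theorem \ref{Theorem: The pullback/pushforward adjunction}, hence left exact and cocontinuous, and is already well-defined as a functor $\Shv(\Dcat,\AFJ)\to\Shv(\Ccat,J)$ by Lemma \ref{Lemma: Pullbacck functor preservves sheaves}. So the only content is to check that $F^{\ast}$ carries a locally constant, locally finite $\AFJ$-sheaf $\mathscr{F}$ to a locally constant, locally finite $J$-sheaf; once that is done, $F^{\ast}$ restricts to the $lcf$-subtoposes automatically.

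First I would record three formal facts about $F^{\ast}$. (i) It preserves the terminal object, $F^{\ast}1\cong 1$, since it is left exact. (ii) It preserves epimorphisms, since it preserves pushouts (being a left adjoint), and in a topos epimorphisms are detected by pushouts. (iii) It intertwines the constant-sheaf functors: $F^{\ast}\Delta_{\Dcat}B\cong\Delta_{\Ccat}B$ for every set $B$. For (iii) I would argue by uniqueness of adjoints: $F^{\ast}\Delta_{\Dcat}$ is left adjoint to $\Gamma_{\Dcat}\circ F_{\ast}$, and by (i) one has $\Gamma_{\Dcat}(F_{\ast}\mathscr{H})=\Hom(1_{\Dcat},F_{\ast}\mathscr{H})\cong\Hom(F^{\ast}1_{\Dcat},\mathscr{H})=\Hom(1_{\Ccat},\mathscr{H})=\Gamma_{\Ccat}(\mathscr{H})$, so $\Gamma_{\Dcat}\circ F_{\ast}=\Gamma_{\Ccat}$ and therefore $F^{\ast}\Delta_{\Dcat}\cong\Delta_{\Ccat}$.

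Next I would run the actual argument. Given $\mathscr{F}$ locally constant and locally finite, choose an epimorphism (cover) $p\colon P\twoheadrightarrow 1$ in $\Shv(\Dcat,\AFJ)$ and a finite set $B$ with $p^{\ast}\mathscr{F}\cong p^{\ast}\Delta_{\Dcat}B$ in the slice topos $\Shv(\Dcat,\AFJ)/P$. Slicing the essential geometric morphism $F$ over $P$ gives a geometric morphism $\Shv(\Ccat,J)/F^{\ast}P\to\Shv(\Dcat,\AFJ)/P$ whose inverse image is canonically the functor induced by $F^{\ast}$; applying it to the displayed isomorphism, and invoking fact (iii), yields $(F^{\ast}p)^{\ast}F^{\ast}\mathscr{F}\cong(F^{\ast}p)^{\ast}\Delta_{\Ccat}B$ in $\Shv(\Ccat,J)/F^{\ast}P$, with $F^{\ast}p\colon F^{\ast}P\twoheadrightarrow 1$ still an epimorphism by fact (ii). Hence $F^{\ast}\mathscr{F}$ is locally constant with the same finite value $B$, which is what we want. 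In the more concrete site-theoretic language one argues the same way: if $\{W_j\to FU\}$ is an $\AFJ$-cover trivializing $\mathscr{F}$, then by cover-reflection (Lemma \ref{Lemma: geometric adhesive site is cover reflecting}) together with the fact that $\AFJ$ is generated by the functorial images $FC$ of $J$-covers (Definition \ref{Definition: Adhesive Site}), it is refined by $\{FU_i\to FU\}=F\{U_i\to U\}$ for a $J$-cover $\{U_i\to U\}$; on each $FU_i$ the sheaf $\mathscr{F}$ restricts to a constant sheaf with finite value, and since $F$ restricts to a functor $\Ccat_{/U_i}\to\Dcat_{/FU_i}$ compatibly with the adhesive topologies, $(F^{\ast}\mathscr{F})|_{U_i}$ is the pullback of that constant sheaf, hence constant with the same finite value, so $\{U_i\to U\}$ trivializes $F^{\ast}\mathscr{F}$.

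The hard part will be the compatibility invoked in the penultimate step: that slicing the essential geometric morphism $F$ over an object recovers $F^{\ast}$ on the slices — equivalently, in the concrete formulation, that $\AFJ$ restricted to $\Dcat_{/FU_i}$ coincides with the adhesive topology built from the restricted functor $\Ccat_{/U_i}\to\Dcat_{/FU_i}$ and the restriction of $J$. This should follow formally from the generators-by-$FC$ description of $\AFJ$ in Definition \ref{Definition: Adhesive Site} (and is in the same spirit as Corollary \ref{Cor: Suffices to prove AFJ sheaf on F of J covers}), but it is the one point where I would be careful to spell out the bookkeeping, especially in the slice over an object of $\Dcat$ not of the form $FX$.
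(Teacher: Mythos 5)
Your proof is correct, and it is essentially the paper's argument written out in full: the paper disposes of this proposition in one line by appealing to the exactness (limit- and colimit-preservation) of $F^{\ast}$, which is exactly what your facts (i)--(iii) and the trivialization-over-a-cover step make explicit (preservation of the terminal object, of epimorphisms, and of constant sheaves, then pulling back the trivializing isomorphism). The slice-compatibility point you flag as delicate is avoided entirely by your first, topos-level formulation, so no gap remains.
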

\begin{proof}
Immediate from the fact that $F^{\ast}$ is exact and hence preserves all limits and colimits.
\end{proof}
\begin{proposition}
There is an essential geometric morphism $F:\Shv(\Ccat,J)_{lcf} \to \Shv(\Dcat,\AFJ)$ with pullback $F^{\ast}$ given as above.
\end{proposition}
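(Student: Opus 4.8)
The plan is to realize the asserted morphism as the composite of $F$ with the geometric inclusion of the locally constant, locally finite subtopos. Since $\Shv(\Ccat,J)_{lcf}$ is a subtopos of $\Shv(\Ccat,J)$, there is a geometric inclusion $i\colon\Shv(\Ccat,J)_{lcf}\hookrightarrow\Shv(\Ccat,J)$ with $i_{\ast}$ the full inclusion and $i^{\ast}$ the lcf-reflection. Composing with the essential geometric morphism $F\colon\Shv(\Ccat,J)\to\Shv(\Dcat,\AFJ)$ of Theorem~\ref{Theorem: The pullback/pushforward adjunction} gives a geometric morphism $F\circ i\colon\Shv(\Ccat,J)_{lcf}\to\Shv(\Dcat,\AFJ)$ with inverse image $i^{\ast}\circ F^{\ast}$. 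By the preceding proposition $F^{\ast}$ sends every lcf sheaf on $\Dcat$ to an lcf sheaf on $\Ccat$, and since $i^{\ast}i_{\ast}\cong\id$ the reflection $i^{\ast}$ acts as the identity on such sheaves; hence $i^{\ast}F^{\ast}$ agrees on $\Shv(\Dcat,\AFJ)_{lcf}$ with the functor $F^{\ast}$ of that proposition, and it is this functor that is meant by ``$F^{\ast}$ given as above''. Left exactness is immediate: $i^{\ast}F^{\ast}$ is the composite of the exact functor $F^{\ast}$ with the left-exact reflection $i^{\ast}$.

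It remains to check that $F\circ i$ is \emph{essential}. Its direct image is $F_{\ast}i_{\ast}$, obtained by stacking the adjunctions $F^{\ast}\dashv F_{\ast}$ of Theorem~\ref{Theorem: The pullback/pushforward adjunction} and $i^{\ast}\dashv i_{\ast}$, so the ordinary geometric-morphism data is already present. For the left adjoint I would combine the left adjoint $F_{!}$ of $F^{\ast}$ from the same theorem with a left adjoint $i_{!}$ of the reflection $i^{\ast}$, so that $F_{!}i_{!}$ is left adjoint to $i^{\ast}F^{\ast}$ by the natural bijections
\[
\Shv(\Dcat,\AFJ)(F_{!}i_{!}X,Y)\;\cong\;\Shv(\Ccat,J)(i_{!}X,F^{\ast}Y)\;\cong\;\Shv(\Ccat,J)_{lcf}(X,i^{\ast}F^{\ast}Y).
\]
Assembling these gives the triple adjunction $F_{!}i_{!}\dashv i^{\ast}F^{\ast}\dashv F_{\ast}i_{\ast}$, hence the desired essential geometric morphism, with inverse image the functor $F^{\ast}$ of the previous proposition.

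The step I expect to be the real obstacle is producing the left adjoint $i_{!}$ of the lcf-reflection, i.e.\ showing that the subtopos $\Shv(\Ccat,J)_{lcf}$ is \emph{essential} inside $\Shv(\Ccat,J)$; without this the composite $F\circ i$ is only an ordinary geometric morphism. This is exactly where assumption A2 must do work: it is the Galois-category structure of $\Shv(\Ccat,J)_{lcf}$ — which, after the choice of fibre functor $\Fi$, presents it in terms of finite continuous $\pi_{1}^{J}(X,x)$-sets — that makes the reflection carry a further left adjoint, something one should not expect for the lcf objects of an arbitrary topos. An alternative route, avoiding essentiality of the inclusion, is to verify directly that $F_{!}$ and $F_{\ast}$ each send lcf sheaves to lcf sheaves and so restrict to adjoints of the restricted $F^{\ast}$ between the two lcf subcategories; but controlling $F_{!}$ and $F_{\ast}$ on covering families again seems to require A2 (or some connectedness input), so the Galois hypothesis appears unavoidable either way. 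Everything else is formal: the composition of adjoint triples together with the transport of left exactness noted above.
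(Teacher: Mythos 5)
Your reduction to a composite $F \circ i$ leaves the two substantive points of the proposition unproven. First, you assume at the outset that $\Shv(\Ccat,J)_{lcf}$ is a subtopos of $\Shv(\Ccat,J)$ in the technical sense, i.e.\ that the full inclusion $i_{\ast}$ admits a left-exact reflection $i^{\ast}$. Nothing in the paper supplies such an ``lcf-reflection,'' and one should not expect it in general: the locally constant, locally finite objects form a full subcategory closed only under finite limits and finite colimits, and (under A2) $\Shv(\Ccat,J)_{lcf}$ is a Galois category --- equivalent to finite continuous $\pi_1$-sets --- not a Grothendieck topos, so the phrase ``full subtopos'' in the paper is loose shorthand for ``full subcategory'' rather than a licence to invoke a geometric inclusion. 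Your whole construction is built on this unestablished $i^{\ast}$. Second, even granting the inclusion, you explicitly leave open the step that carries the actual content of the statement: producing the extra left adjoint (your $i_{!}$, or in your alternative route the verification that $F_{!}$ and $F_{\ast}$ preserve lcf objects). You say A2 ``must do the work'' but give no argument, and as you note yourself, without it the composite is only an ordinary geometric morphism --- whereas \emph{essentiality} is precisely what the proposition asserts. So the proposal as written does not prove the statement.

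For comparison, the paper does not factor through the inclusion at all: it reruns the argument of Theorem \ref{Theorem: The pullback/pushforward adjunction} directly at the lcf level. The pasting-diagram computation shows that the restricted $F^{\ast}$ preserves the relevant (finite) colimits and limits, Freyd's Adjoint Functor Theorem is then invoked to produce both adjoints $F_{!} \dashv F^{\ast} \dashv F_{\ast}$, and the fact that $F^{\ast}\mathscr{F}$ stays locally constant and locally finite is deduced from exactness of $F^{\ast}$. If you want to rescue your route, the workable variant is the second one you sketch: use A2 to present $\Shv(\Ccat,J)_{lcf}$ as finite continuous $\pi_1^{J}$-sets and construct the adjoints of the restricted $F^{\ast}$ there by hand; but that is a different argument from the composite-with-$i$ plan and still needs to be written out.
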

\begin{proof}
The proof of this proposition is formal, and follows in the same way as Theorem \ref{Theorem: The pullback/pushforward adjunction}. That is, if $\lbrace\mathscr{F}_i \; | \; i \in I\rbrace$ is a finite family of functors with colimit $\mathscr{F}$ and colimit maps $\alpha_i:\mathscr{F}_i \to \mathscr{F}$, consider the following family of $2$-cells:
\[
\xymatrix{
\Ccat^{\op} \rrtwocell^{F^{\ast}\mathscr{F}_i}_{F^{\ast}\mathscr{F}}{\quad F^{\ast}\alpha_i} & & \Set
}
\]
Observing that the functor $F^{\ast}\mathscr{F}_i = \mathscr{F}(F(-)):\Ccat^{\op} \to \Set$, we can rewrite the $2$-cell as
\[
\xymatrix{
\Ccat^{\op} \rrtwocell^{\mathscr{F}_i(F^(-))}_{\mathscr{F}(F(-))}{\alpha_i} & & \Set
}
\]
which factors as
\[
\xymatrix{
\Ccat^{\op} \rrtwocell^{F^{\op}}_{F^{\op}}{\id_{F}} & & \Dcat^{\op} \rrtwocell^{\mathscr{F}_i}_{\mathscr{F}}{\beta_i} & & \Set
}
\]
and further simplifies to the diagram:
\[
\xymatrix
{
\Ccat^{\op} \ar[rr]^{F^{\op}} & & \Dcat^{\op} \rrtwocell^{\mathscr{F}_i}_{\mathscr{F}}{\alpha_i} & & \Set
}
\]
From this it follows that upon taking the colimit of the diagram that the colimiting aspect is calculated solely in the $\Shv(\Dcat,\AFJ)$ $2$-cell. Thus it follows that
\[
\lim_{\substack{\longrightarrow \\ i \in I}}F^{\ast}\mathscr{F}_i = F^{\ast}\mathscr{F};
\]
note that this is because $\Shv(\Ccat,J)_{lcf}$ and $\Shv(\Dcat,\AFJ)_{lcf}$ are the categories of locally constant, locally finite objects and morphisms between them. The fact that $F^{\ast}$ preserves limits is shown mutatis mutandis, and the fact that this implies $F^{\ast}$ admits both left and right adjoints comes from Freyd's Adjoint Functor Theorem and the fact that the codomain of $F^{\ast}$ is a topos. Finally, the fact that $F^{\ast}\mathscr{F}$ remains locally constant and locally finite comes from the fact that $F^{\ast}$ preserves all finite limits and finite colimits.
\end{proof}
\begin{lemma}
Let $\Fi:\Shv(\Ccat,J)_{lcf} \to \FinSet$ be a functor. Then:
\begin{enumerate}
	\item If $\Fi$ is exact, so is $\Fi \circ F^{\ast}: \Shv(\Dcat,\AFJ) \to \FinSet$.
	\item If $\Fi$ reflects isomorphisms, so does $\Fi \circ F^{\ast}$.
	\item If $\Fi$ is pro-representable, so is $\Fi \circ F^{\ast}$.
\end{enumerate}
\end{lemma}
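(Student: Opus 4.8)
The plan is to treat the three items in order, reducing each to the corresponding property of $\Fi$ together with a property of $F^{\ast}$ already established in this section. For (1): as noted above, $F^{\ast}$ restricts to an exact functor $\Shv(\Dcat,\AFJ)_{lcf}\to\Shv(\Ccat,J)_{lcf}$ — it preserves finite limits as the inverse-image part of the essential geometric morphism of Theorem~\ref{Theorem: The pullback/pushforward adjunction}, it preserves all colimits as a left adjoint in the triple adjunction $F_{!}\dashv F^{\ast}\dashv F_{\ast}$, and $lcf$ subcategories are closed under finite limits and finite colimits, so these are computed as in the ambient toposes. A composite of exact functors is exact, so $\Fi\circ F^{\ast}$ is exact whenever $\Fi$ is. For (2): assumption~A1 makes $F^{\ast}$ fully faithful, hence isomorphism-reflecting (if $F^{\ast}f$ has inverse $g=F^{\ast}h$ by fullness, then $F^{\ast}(h\circ f)=\id=F^{\ast}(\id)$ and $F^{\ast}(f\circ h)=\id=F^{\ast}(\id)$, so faithfulness forces $h\circ f=\id$ and $f\circ h=\id$); since $\Fi$ reflects isomorphisms by hypothesis and a composite of isomorphism-reflecting functors again reflects isomorphisms, so does $\Fi\circ F^{\ast}$. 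Neither item needs more than what is already in hand.

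For (3), the substantive case, suppose $\Fi\cong\colim_{i\in I}\Hom_{\Shv(\Ccat,J)_{lcf}}(P_{i},-)$ is a filtered colimit of corepresentables along a cofiltered system $(P_{i})_{i\in I}$ in $\Shv(\Ccat,J)_{lcf}$; by A2 and the structure theory of Galois categories one may in fact take the $P_{i}$ connected Galois, a refinement I keep in reserve for the fundamental-group comparison downstream. Using the left adjoint $F_{!}$ of $F^{\ast}$ and the adjunction isomorphism $\Hom(P_{i},F^{\ast}\mathscr{G})\cong\Hom(F_{!}P_{i},\mathscr{G})$, I obtain, naturally in $\mathscr{G}\in\Shv(\Dcat,\AFJ)_{lcf}$,
\[
(\Fi\circ F^{\ast})(\mathscr{G})\;\cong\;\colim_{i\in I}\Hom_{\Shv(\Ccat,J)_{lcf}}(P_{i},F^{\ast}\mathscr{G})\;\cong\;\colim_{i\in I}\Hom_{\Shv(\Dcat,\AFJ)_{lcf}}(F_{!}P_{i},\mathscr{G}),
\]
so $\Fi\circ F^{\ast}$ is pro-represented by the system $(F_{!}P_{i})_{i\in I}$, which is cofiltered of the same shape since $F_{!}$ carries the transition diagram of $(P_{i})$ to a diagram of the same shape.

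The step I expect to be the main obstacle is checking that $(F_{!}P_{i})_{i\in I}$ really is a pro-object of $\Shv(\Dcat,\AFJ)_{lcf}$, i.e.\ that $F_{!}$ carries locally constant, locally finite sheaves to locally constant, locally finite sheaves (and, if one wants a \emph{strict} pro-representation, that the transition maps $F_{!}P_{j}\to F_{!}P_{i}$ remain epimorphic). This does not follow formally, because $F_{!}$ as a left adjoint need not be left exact, so $lcf$-ness is not automatic; instead I plan to use the explicit description of $\AFJ$ as generated by functorial images $F(S)$ of $J$-covers (Definition~\ref{Definition: Adhesive Site} and Proposition~\ref{Prop: Generated pretop generates AFJ top}): a sheaf on $\Dcat$ is $lcf$ precisely when it is trivialized over some such cover, so one transports a $J$-cover of $U$ trivializing $P_{i}$ to the $\AFJ$-cover of $FU$ generated by its image, invoking assumption~A3 to extend the trivialization across the objects of $\Dcat$ lying outside the essential image of $F$. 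Granting this, the displayed isomorphism exhibits $\Fi\circ F^{\ast}$ as pro-representable, which proves (3); and running the same bookkeeping with the $P_{i}$ chosen Galois yields a Galois pro-representing system, as will be needed when comparing fundamental groups.
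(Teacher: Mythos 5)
Your proposal is correct and follows essentially the same route as the paper: (1) is the composite of exact functors, (2) reduces to $F^{\ast}$ reflecting isomorphisms via assumption A1, and (3) transports the pro-representing system $(A_i)$ through the adjunction $F_{!} \dashv F^{\ast}$ exactly as the paper does, concluding pro-representability of $\Fi \circ F^{\ast}$ by the system $(F_{!}A_i)$. The extra point you flag in (3) — whether $F_{!}$ lands in the locally constant, locally finite subcategory — is not addressed in the paper either (it simply notes that covariance of $F_{!}$ gives a filtered inverse system and stops there), so your argument is, if anything, more cautious than the published one.
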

\begin{proof}
Claim $(1)$ follows immediately from the fact that $F^{\ast}$ is exact because it has a left and right adjoint, and from the fact that the composite of exact functors is exact.

For part $(2)$ we assume that there are sheaves $\mathscr{F}, \mathscr{G} \in \ob\Shv(\Dcat,\AFJ)$ such that 
\[
(\Fi \circ F^{\ast})(\mathscr{F}) \cong (\Fi \circ F^{\ast})(\mathscr{G}).
\]
However, in this case because $\Fi$ reflects isomorphisms, we have that $F^{\ast}\mathscr{F} \cong F^{\ast}\mathscr{G}$, and so this reduces to the fact that $F^{\ast}$ reflects isomorphisms. However, because we have assumed properties (A1) above, the result follows from that fact that $F^{\ast}$ reflects isomorphisms and the fact that the composite of two isomorphism reflecting functors is again isomorphism reflecting.

To prove $(3)$, assume that $Fi$ is representable and let $\lbrace A_i \; | \; i \in I\rbrace$ be the filtered inverse system of objects such that
\[
\Fi(\mathscr{F}) \cong \lim_{\substack{\longrightarrow \\ I \in I}} \Shv(\Ccat,J)(A_i,\mathscr{F})
\]
for all $\mathscr{F} \in \ob\Shv(\Ccat,J)$. However, fix a $\mathscr{G} \in \ob\Shv(\Dcat,\AFJ)$ and consider that for each $i \in I$,
\[
\Shv(\Ccat,J)(A_i,F^{\ast}\mathscr{G}) \cong \Shv(\Dcat,\AFJ)(F_{!}A_i,\mathscr{G})
\]
so
\[
(\Fi\circ F^{\ast})(\mathscr{G}) = \Fi(F^{\ast}\mathscr{G}) \cong \lim_{\substack{\longrightarrow \\ I \in I}}\Shv(\Ccat,J)(A_i,F^{\ast}\mathscr{G}) \cong \lim_{\substack{\longrightarrow \\ I \in I}}\Shv(\Dcat,\AFJ)(F_{!}A_i,\mathscr{G}).
\]
Because $\lbrace A_i \; | \; i \in I \rbrace$ is a filtered inverse system, the category $I$ is filtered and so $\lbrace F_{!}A_i \; | \; i \in I \rbrace$  is a filtered inverse system by the covariance of $F_{!}$. This proves the lemma.
\end{proof}
Note that the proof of the pro-reresentability of the functor $\Fi \circ F^{\ast}$ used in an essential way the adjunction $F_{!} \dashv F^{\ast}$. Using this again in the same way we derive the lemma below:
\begin{lemma}\label{Lemma: Fibre functor pushforward by F upper star normal representing}
If $\mathscr{F} \in \ob\Shv(\Dcat,\AFJ)_{lcf}$ and if $A$ is a normal object for $\Shv(\Ccat,J)_{lcf}$ such that
\[
\Shv(\Ccat,J)(A,F^{\ast}\mathscr{F}) \cong \Fi(F^{\ast}\mathscr{F})
\]
then
\[
\Shv(\Dcat,\AFJ)(F_{!}A,\mathscr{F}) \cong \Fi \circ F^{\ast}(\mathscr{F}).
\]
\end{lemma}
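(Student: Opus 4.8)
The plan is to deduce the statement almost at once from the adjunction $F_! \dashv F^\ast$ attached to the essential geometric morphism of Theorem \ref{Theorem: The pullback/pushforward adjunction}, reasoning exactly as in the proof of part $(3)$ of the preceding lemma; the whole content is the concatenation of two isomorphisms of hom-sets.

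First I would check that the hypotheses are well-posed. Since $\mathscr{F} \in \ob\Shv(\Dcat,\AFJ)_{lcf}$, the earlier proposition on the restriction of $F^\ast$ gives $F^\ast\mathscr{F} \in \ob\Shv(\Ccat,J)_{lcf}$, so that $\Fi(F^\ast\mathscr{F})$ is defined and the phrase ``normal object $A$ of $\Shv(\Ccat,J)_{lcf}$ with $\Shv(\Ccat,J)(A,F^\ast\mathscr{F}) \cong \Fi(F^\ast\mathscr{F})$'' makes sense; under assumption (A2) such an $A$ exists by pro-representability of $\Fi$ (take a term of the representing pro-system dominating $F^\ast\mathscr{F}$), but for the lemma this isomorphism is simply assumed, so nothing further is needed at this stage.

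The main step is then to invoke the hom-set form of $F_! \dashv F^\ast$ to obtain
\[
\Shv(\Dcat,\AFJ)(F_!A,\mathscr{F}) \cong \Shv(\Ccat,J)(A,F^\ast\mathscr{F}),
\]
to compose with the hypothesized isomorphism $\Shv(\Ccat,J)(A,F^\ast\mathscr{F}) \cong \Fi(F^\ast\mathscr{F})$, and to rewrite $\Fi(F^\ast\mathscr{F}) = (\Fi \circ F^\ast)(\mathscr{F})$, yielding
\[
\Shv(\Dcat,\AFJ)(F_!A,\mathscr{F}) \cong (\Fi \circ F^\ast)(\mathscr{F}),
\]
which is the claim. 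The only point needing any care — and it is about as close to an obstacle as this argument has — is bookkeeping of the ambient categories: one must observe that $F_!A$ is an object of $\Shv(\Dcat,\AFJ)$ and that $\mathscr{F}$ lies there as well (being lcf), so the left-hand hom-set is the intended one, and that $\Shv(\Ccat,J)(A,F^\ast\mathscr{F})$ computed in the full subcategory $\Shv(\Ccat,J)_{lcf}$ agrees with the one computed in $\Shv(\Ccat,J)$, so that the adjunction of Theorem \ref{Theorem: The pullback/pushforward adjunction} applies verbatim. No idea beyond the adjunction is required.
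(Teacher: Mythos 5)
Your proof is correct and is exactly the argument the paper intends: the paper derives this lemma by ``using the adjunction $F_!\dashv F^{\ast}$ again in the same way'' as in the pro-representability step, i.e.\ via the hom-set isomorphism $\Shv(\Dcat,\AFJ)(F_!A,\mathscr{F})\cong\Shv(\Ccat,J)(A,F^{\ast}\mathscr{F})$ composed with the hypothesized identification with $\Fi(F^{\ast}\mathscr{F})$. No discrepancy to report.
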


\begin{lemma}\label{Lemma: F lower shriek preserves indecomposables}
Assume that $\Shv(\Dcat,\AFJ)_{lcf}$ admits an exact isomorphism reflecting functor $I:\Shv(\Dcat,\AFJ) \to \FinSet$. Then if an object $A$  in $\Shv(\Ccat,J)$ is non-initial and indecomposable, so is $F_{!}A$.
\end{lemma}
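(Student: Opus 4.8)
The plan is to deduce both assertions directly from the triple adjunction $F_{!} \dashv F^{\ast} \dashv F_{\ast}$ of Theorem~\ref{Theorem: The pullback/pushforward adjunction}, the exactness of $F^{\ast}$ (it has both a left and a right adjoint), and the fact that every Grothendieck topos is extensive with a strict initial object; the functor $I$ enters only to guarantee that ``indecomposable'' retains its Galois-theoretic meaning in $\Shv(\Dcat,\AFJ)$.

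First I would establish non-initiality. Let $\eta_{A}\colon A \to F^{\ast}F_{!}A$ be the unit of $F_{!}\dashv F^{\ast}$. If $F_{!}A$ were initial in $\Shv(\Dcat,\AFJ)$, then since $F^{\ast}$ is exact it sends the initial object to the initial object, so $\eta_{A}$ would be a morphism from $A$ into an initial object; strictness of initial objects in a topos then forces $A$ itself to be initial, contradicting the hypothesis. Hence $F_{!}A$ is non-initial.

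Next, for indecomposability, suppose $F_{!}A \cong B \sqcup C$. Applying the exact functor $F^{\ast}$ gives $F^{\ast}F_{!}A \cong F^{\ast}B \sqcup F^{\ast}C$, with coproduct inclusions $F^{\ast}(\iota_{B})$ and $F^{\ast}(\iota_{C})$, where $\iota_{B}\colon B \to B\sqcup C$ and $\iota_{C}\colon C \to B\sqcup C$. Composing with $\eta_{A}$ yields a morphism $A \to F^{\ast}B \sqcup F^{\ast}C$. Because $A$ is non-initial and indecomposable it is connected, and since a Grothendieck topos is extensive we have $\Hom(A, F^{\ast}B \sqcup F^{\ast}C) \cong \Hom(A,F^{\ast}B)\sqcup\Hom(A,F^{\ast}C)$, so $\eta_{A}$ factors through exactly one summand, say $\eta_{A} = F^{\ast}(\iota_{B})\circ f$ for some $f\colon A \to F^{\ast}B$. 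Transposing $f$ across $F_{!}\dashv F^{\ast}$ gives $\tilde{f}\colon F_{!}A \to B$ with $f = F^{\ast}(\tilde{f})\circ\eta_{A}$, whence the defining property of the unit gives $\iota_{B}\circ\tilde{f} = \id_{F_{!}A}$. Thus the coproduct inclusion $\iota_{B}$ is a split epimorphism; being also a monomorphism (coproduct inclusions are monic in an extensive category) it is an isomorphism, and extensivity then forces $C$ to be initial. So $F_{!}A$ admits no decomposition into two non-initial summands, i.e. it is indecomposable.

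Finally, the hypothesis on $I$ is what licenses this in the finitary Galois sense used in this section: given any decomposition $F_{!}A \cong \coprod_{i \in S}B_{i}$, exactness of $I$ gives $I(F_{!}A)\cong\coprod_{i\in S}I(B_{i})$ in $\FinSet$, so this is a finite set, and since $I$ reflects isomorphisms, $I(B_{i})=\emptyset$ forces $B_{i}$ initial; hence all but finitely many $B_{i}$ are initial, and the binary argument above applied to the finitely many non-initial ones shows at most one is non-initial. I expect the main obstacle to be the transposition step: one must track the coproduct decomposition of $F_{!}A$ carefully through $F^{\ast}$ and through the unit $\eta_{A}$ in order to see that $\iota_{B}$ acquires a section --- everything else (strictness of the initial object, extensivity, and exactness of $F^{\ast}$ and of $I$) is formal topos theory.
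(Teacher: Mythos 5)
Your argument is correct, but it is a genuinely different route from the one in the paper. You prove indecomposability of $F_{!}A$ directly: given $F_{!}A \cong B \sqcup C$, you apply the cocontinuous $F^{\ast}$, use that a non-initial indecomposable object of a Grothendieck topos maps into a binary coproduct through exactly one summand (extensivity, disjointness, strictness of the initial object), factor the unit $\eta_A$ through, say, $F^{\ast}B$, and transpose across $F_{!} \dashv F^{\ast}$ to obtain a section of the inclusion $\iota_B$, forcing $\iota_B$ to be an isomorphism and $C$ to be initial; non-initiality of $F_{!}A$ follows from strictness of $\bot$. Notably, your proof never really uses the hypothesis on $I$ (your last paragraph invokes it only for infinite decompositions, which the binary notion of indecomposability does not require), so you in effect weaken the hypotheses of the lemma. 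The paper instead works inside the Galois-category formalism: it uses that $F_{!}$ preserves coproducts and $\bot$, the adjunction to show that $F_{!}$ of a non-initial sheaf cannot be $\bot$, and then an essential counting argument in $\FinSet$ through the exact, isomorphism-reflecting functor $I$ (in practice $\Fi \circ F^{\ast}$), arguing by contradiction; the paper's own remarks note that finiteness and excluded middle are needed for that argument. Your approach buys a shorter, purely formal proof avoiding $I$, finiteness, and contradiction; the paper's stays within the fibre-functor toolkit used throughout the section. One small point: the implication ``non-initial and indecomposable implies connected'' is itself the extensivity fact you are relying on, so it deserves an explicit line rather than being taken as given.
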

\begin{proof}
Assume that $A \in \ob\Shv(\Ccat,J)$ is indecomposable and find a nontrivial composition
\[
A \cong \coprod_{i = 1}^{n} A_i.
\]
Then if we apply $F_{!}$ to the above coproduct we get that
\[
F_{!}A = \coprod_{i = 1}^{n} F_{!}A_i,
\]
and this will be nontrivial if we can show at least one $F_{!}A_i \not\cong F_{!}A$ or $F_{!}A \not\cong \bot$. 

We will first show that at least one $F_{!}A_i \not\cong \bot$. Consider that since the decomposition of $A$ is nontrivial, there exists at least one object in the composite, call it $A_k$, such that $A_k \not\cong A$ and $A_k \not\cong \bot$. Because $A_k \not\cong \bot,$ for each object $U$ of $\Ccat$,
\[
A_k(U) \ne \emptyset
\]
as the initial sheaf $\bot$ is uniquely defined by the equation
\[
\bot(V) = \emptyset
\]
for all objects $X$ of $\Ccat$. But then we observe that $\Shv(A_k,\bot) = \emptyset$, as no nonempty set can map to the empty set and if a sheaf is empty anywhere it is empty everywhere. Now assume that there is a map $\phi:F_{!}A_k \to F_{!}\bot$; however, using that $F_{!}$ is a right adjoint shows that $F_{!}\bot= \bot$ and so there is a map $F_{!}A_k \to \bot$. Now observe that since $F^{\ast}$ is left exact we have that
\[
\Shv(\Dcat,\AFJ)(F_{!}A_k.\bot) = \Shv(\Dcat,\AFJ)(F_{!}A_k,F_{!}\bot) \cong \Shv(\Ccat,J)(A_k,F^{\ast}F_{!}\bot) = \Shv(\Ccat,J)(A_k,\bot) = \emptyset.
\]
Therefore $F_{!}A_k \not\cong \bot$ as $\Shv(\Dcat,\AFJ)(F_{!}A_k,F_{!}\bot) = \emptyset$.

We will now show that $F_{!}A_k \not\cong F_{!}A$. To do this aftpodac this were the case and note that from the assumption $F_{!}A_k \cong F_{!}A$ and from
\[
F_{!}A \cong \coprod_{i=1}^{n} F_{!}A_i
\]
we have that
\[
F_{!}A_k \cong \coprod_{i=1}^{n} F_{!}A_i = F_{!}A_k \coprod \left(\coprod_{i = 1, i \ne k}^{n} F_{!}A_i\right).
\]
Now apply $I$ to get that
\[
I\left(F_{!}A_k\right) = I(F_{!}A_k) \coprod \coprod_{i = 1, i \ne k}^{n} I(F_{!}A_i) = I(F_{!}A_k) \sqcup \bigcup_{i=1, i \ne k}^{n} I(F_{!}A_i).
\]
Now since the above equality of finite sets is a disjoint union it follows that $I(F_{!}A_i) = \emptyset$. However, because $I$ is exact, $I(\bot) = \emptyset$ and since $I$ reflects isomorphisms, this happens exactly when each $F_{!}A_i \cong \bot$. 

On the other hand, if $A_k \not\cong A$ and $A$ is indecomposable, by the above argument if follows that there is an $A_j$ for which $A_j$ is also nontrivial. Proceeding with the same line of thought and using that $F_{!}A_j \not\cong \bot$ and using that just above we have shown that $F_{!}A_j \not\cong \bot$ we have that
\[
I(F_{!}A_j)  \cong I(F_{!}A) \cong I(F_{!}A_k)
\]
while simultaneously we have
\[
I(F_{!}A_j) \sqcup I(F_{!}A_k) = I(F_{!}A).
\]
This is evidently false, and so it must be the case that $F_{!}A_k \not\cong F_{!}A$. This proves that $F_{!}A$ is indecomposable.
\end{proof}
\begin{remark}
Note that the above lemma can be false (or at least this proof fails) if we relax the assumption that the exact isomorphism reflecting functor $I$ factors through $\Set$ and not $\FinSet$, as infinite sets can be isomorphic to some of their proper subsets.
\end{remark}
\begin{remark}
It is worth remarking that the proof we gave above used the Law of the Excluded Middle. I could not find a way that did not make use of the Principle of Contraposition or Proof by Contradiction. Perhaps this should not be bothersome, however, because of the Boolean nature of Galois categories.
\end{remark}
\begin{corollary}\label{Cor: Pushforward fibre functor represented by shriek of normals}
For every sheaf $\mathscr{F}$ in $\Shv(\Dcat,\AFJ)$, there is an indecomposable sheaf $\mathscr{G}$ in $\Shv(\Dcat,\AFJ)$ such that
\[
\Fi \circ F^{\ast}(\mathscr{F})  \cong \Shv(\Dcat,\AFJ)(\mathscr{G},\mathscr{F}).
\]
\end{corollary}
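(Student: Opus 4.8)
The plan is to exhibit $\mathscr{G}$ concretely as $F_{!}A$ for a suitably chosen normal object $A$ of the Galois category $\Shv(\Ccat,J)_{lcf}$, and then to read off the claimed isomorphism directly from Lemma \ref{Lemma: Fibre functor pushforward by F upper star normal representing}.

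First I would apply $F^{\ast}$ to $\mathscr{F}$; since $F^{\ast}$ restricts to a functor $\Shv(\Dcat,\AFJ)_{lcf} \to \Shv(\Ccat,J)_{lcf}$, the object $F^{\ast}\mathscr{F}$ lies in the Galois category $\Shv(\Ccat,J)_{lcf}$ of assumption (A2). Now I invoke the standard structure theory of Galois categories: the fibre functor $\Fi$ is pro-representable by a cofiltered system $\lbrace A_i \; | \; i \in I\rbrace$ of connected normal (Galois) objects, so that $\Fi \cong \colim_{i} \Shv(\Ccat,J)(A_i,-)$, and because each connecting morphism $A_j \to A_i$ is an epimorphism, the transition maps $\Shv(\Ccat,J)(A_i,-) \to \Shv(\Ccat,J)(A_j,-)$ are monomorphisms. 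Evaluating at $F^{\ast}\mathscr{F}$ presents the finite set $\Fi(F^{\ast}\mathscr{F})$ as the union of an increasing chain of its subsets, which must stabilize; hence there is an index $i_0$ such that, writing $A := A_{i_0}$,
\[
\Shv(\Ccat,J)(A, F^{\ast}\mathscr{F}) \cong \Fi(F^{\ast}\mathscr{F}),
\]
and $A$, being a normal object of a Galois category, is connected, hence non-initial and indecomposable.

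Next I would verify that the hypotheses of Lemma \ref{Lemma: F lower shriek preserves indecomposables} hold: the composite $I := \Fi \circ F^{\ast}$ is an exact, isomorphism-reflecting functor into $\FinSet$ by parts (1) and (2) of the lemma immediately preceding Lemma \ref{Lemma: Fibre functor pushforward by F upper star normal representing} (using that a fibre functor of a Galois category is exact and reflects isomorphisms). Applying Lemma \ref{Lemma: F lower shriek preserves indecomposables} to the non-initial indecomposable object $A$ then shows that $\mathscr{G} := F_{!}A$ is a non-initial indecomposable object of $\Shv(\Dcat,\AFJ)$. Finally, since $A$ is a normal object of $\Shv(\Ccat,J)_{lcf}$ with $\Shv(\Ccat,J)(A,F^{\ast}\mathscr{F}) \cong \Fi(F^{\ast}\mathscr{F})$, Lemma \ref{Lemma: Fibre functor pushforward by F upper star normal representing} yields
\[
\Fi \circ F^{\ast}(\mathscr{F}) \cong \Shv(\Dcat,\AFJ)(F_{!}A,\mathscr{F}) = \Shv(\Dcat,\AFJ)(\mathscr{G},\mathscr{F}),
\]
which is the assertion.

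The only genuinely substantive point — the rest being bookkeeping with the two preceding lemmas — is the stabilization step that extracts a \emph{single} normal object $A$ representing $\Fi$ on the fixed object $F^{\ast}\mathscr{F}$; this rests on the pro-representability axiom for Galois categories together with the finiteness of the fibre $\Fi(F^{\ast}\mathscr{F})$, and one must be careful that the $A$ produced is connected so that it is a legitimate input to Lemma \ref{Lemma: F lower shriek preserves indecomposables}. A secondary matter to keep in mind is that $\Fi \circ F^{\ast}$ (and hence $\mathscr{G}$) is only meaningful when $F^{\ast}\mathscr{F}$ is locally constant and locally finite, so the corollary should be read as a statement about such $\mathscr{F}$.
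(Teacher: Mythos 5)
Your proof is correct and follows essentially the same route as the paper: produce a normal object of $\Shv(\Ccat,J)_{lcf}$ representing $\Fi$ at $F^{\ast}\mathscr{F}$, then combine Lemma \ref{Lemma: Fibre functor pushforward by F upper star normal representing} with Lemma \ref{Lemma: F lower shriek preserves indecomposables} to conclude that $F_{!}$ of that object is the desired indecomposable representing sheaf. The only difference is that the paper simply cites Proposition 8.46 of \cite{JohnstoneToposTheory} for the existence of the representing normal object, whereas you rederive it from pro-representability of the fibre functor and a stabilization argument on the finite fibre, and you make explicit the (implicit) restriction to locally constant, locally finite $\mathscr{F}$.
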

\begin{proof}
Begin by observing that since each functor $F^{\ast}\mathscr{F}$ is a $J$-sheaf on $\Ccat$, there is a normal object $N$ in $\Shv(\Ccat,J)$ such that
\[
\Fi(F^{\ast}\mathscr{F}) \cong \Shv(\Ccat,J)(N,F^{\ast}\mathscr{F})
\]
by Proposition 8.46 of \cite{JohnstoneToposTheory}. Thus by Lemmas \ref{Lemma: Fibre functor pushforward by F upper star normal representing} and \ref{Lemma: F lower shriek preserves indecomposables} we have that $F_{!}N$ is indecomposable and that
\[
\Shv(\Dcat,\AFJ)(F_{!}N,\mathscr{F}) \cong \Fi \circ F^{\ast}(\mathscr{F}),
\]
which concludes the proof.
\end{proof}
\begin{lemma}\label{Lemma: Shriek of normal auto is group}
If $N$ is normal in $\Shv(\Ccat,J)_{lcf}$ then $\Shv(\Dcat,\AFJ)(F_!N,F_!N)$ is a group.
\end{lemma}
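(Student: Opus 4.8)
The plan is to prove that every endomorphism of $F_!N$ is an isomorphism; since the automorphisms of any object form a group under composition, this is exactly the assertion that the composition monoid $\Shv(\Dcat,\AFJ)(F_!N,F_!N)$ is a group. The key reduction is that $F^*$, being fully faithful by assumption (A1), reflects isomorphisms, so it suffices to show that for each $\phi \in \Shv(\Dcat,\AFJ)(F_!N,F_!N)$ the morphism $F^*\phi$ is an isomorphism in $\Shv(\Ccat,J)$. Since $F_!$ restricts to the locally constant, locally finite parts, $F^*\phi$ is in fact an endomorphism inside the Galois category $\Shv(\Ccat,J)_{lcf}$, and there we can use the Galois-categorical structure.

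First I would show that $F^*F_!N$ is a \emph{connected} object of $\Shv(\Ccat,J)_{lcf}$. Applying Lemma \ref{Lemma: F lower shriek preserves indecomposables} with the exact, isomorphism-reflecting functor $I := \Fi \circ F^*$ (which is exact and isomorphism-reflecting by the preceding lemma, using that $\Fi$ has these properties and that $F^*$ reflects isomorphisms), together with the fact that the normal object $N$ is connected and hence non-initial and indecomposable, we conclude that $F_!N$ is non-initial and indecomposable. Now suppose $F^*F_!N \cong A \sqcup B$ in $\Shv(\Ccat,J)_{lcf}$. Because $F^*$ is fully faithful the counit $F_!F^* \Rightarrow \id$ is an isomorphism, and since $F_!$ preserves coproducts and the initial object we obtain
\[
F_!N \;\cong\; F_!F^*F_!N \;\cong\; F_!A \sqcup F_!B .
\]
As $F_!N$ is indecomposable, one summand, say $F_!B$, must be initial; but then the unit gives a morphism $\eta_B \colon B \to F^*F_!B \cong \bot$ (the isomorphism $F^*F_!B \cong \bot$ holding because $F_!B$ is initial and $F^*$ preserves the initial object), which forces $B \cong \bot$ by strictness of the initial object in the topos $\Shv(\Ccat,J)$. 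Hence every such decomposition is trivial, $F^*F_!N$ is indecomposable, and a similar argument (apply $F_!$ and the counit isomorphism again) shows $F^*F_!N$ is non-initial; therefore it is connected.

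Next I would invoke the standard fact that in a Galois category every endomorphism of a connected object is an isomorphism: the image of such an endomorphism is a subobject of the connected object $C = F^*F_!N$, and it is non-initial (a morphism out of a non-initial object cannot factor through the strict initial object), hence it is all of $C$, so the endomorphism is epic; applying the fibre functor $\Fi$ turns it into a surjective self-map of a finite set, which is a bijection, and since $\Fi$ reflects isomorphisms the endomorphism is an isomorphism. Applying this to $F^*\phi \colon F^*F_!N \to F^*F_!N$ shows that $F^*\phi$ is an isomorphism, whence $\phi$ is an isomorphism because $F^*$ reflects them. Thus every endomorphism of $F_!N$ is invertible, so $\Shv(\Dcat,\AFJ)(F_!N,F_!N) = \Aut(F_!N)$ is a group.

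The crux of the argument is the connectedness of $F^*F_!N$; everything after it is formal. That step rests on three ingredients: the indecomposability of $F_!N$ (Lemma \ref{Lemma: F lower shriek preserves indecomposables}), the counit isomorphism $F_!F^* \cong \id$ obtained from full faithfulness of $F^*$, and the strictness of initial objects in a topos. A minor technical point to attend to is that $F^*F_!N$ genuinely lies in the Galois subcategory $\Shv(\Ccat,J)_{lcf}$, which follows from the essential geometric morphism restricting to the locally constant, locally finite parts as established earlier in this section.
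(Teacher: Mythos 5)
Your proof is correct, but it takes a genuinely different route from the paper's. The paper argues entirely inside $\Shv(\Dcat,\AFJ)$: it takes the epi--mono factorization $\phi = \mu \circ \epsilon$ of the given endomorphism of $F_!N$, uses the indecomposability of $F_!N$ (the same Lemma \ref{Lemma: F lower shriek preserves indecomposables} you invoke) to argue the image must be all of $F_!N$, and then applies the exact, isomorphism-reflecting functor $\Fi \circ F^{\ast}$ to $\epsilon$ and $\mu$ separately, concluding each is an automorphism since a surjection (resp.\ injection) of a finite set into itself is a bijection. You instead transport the problem along $F^{\ast}$: using the counit isomorphism $F_!F^{\ast} \cong \id$ coming from (A1), you show $F^{\ast}F_!N$ is a connected object of the Galois category $\Shv(\Ccat,J)_{lcf}$, quote the standard Galois-categorical fact that endomorphisms of connected objects are automorphisms, and reflect back through the conservative functor $F^{\ast}$. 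What your route buys is access to the full Galois axioms, in particular that monomorphisms in $\Shv(\Ccat,J)_{lcf}$ are isomorphisms onto direct summands; this cleanly justifies the step ``a non-initial subobject of a connected object is everything,'' which in the paper's version is carried out in the ambient category $\Shv(\Dcat,\AFJ)$, where subobjects need not be complemented and the appeal to indecomposability is more delicate. The cost is that you need the counit isomorphism and the preservation of coproducts and initial objects by $F_!$ and $F^{\ast}$, none of which the paper's proof of this particular lemma uses explicitly; the connectedness of $F^{\ast}F_!N$ you establish along the way is a pleasant by-product that sits well next to Corollary \ref{Cor: Transfer of Normal Objects}.
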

\begin{proof}
Because $N$ is normal, it is indecomposable and $N \not\cong \bot$. Thus we have that $F_!N$ by Lemma \ref{Lemma: F lower shriek preserves indecomposables} that $F_!N$ is indecomposable. Now let $\phi:F_!N \to F_!N$ be an endomorphism of $F_!N$ and consider the $(\mathcal{E},\mathcal{M})$-factorization of $\phi$:
\[
\xymatrix{
F_!N \ar[rr]^{\phi} \ar[dr]_{\epsilon} & & F_!N	\\
 & X \ar[ur]_{\mu} &
}
\]
where $X$ is a subobject of $F_!N$, $\epsilon$ is a regular epic, and $\mu$ is monic. Then since $\phi$ cannot factor through $\bot$, $X \not\cong\bot$ and hence a nonzero subobject of $F_!N$; by the indecomposability of $F_!N$ this implies that $X = F_!N$. Therefore, by considering that
\[
(\Fi \circ F^{\ast})(\epsilon):(\Fi \circ F^{\ast})(F_!N) \to (\Fi \circ F^{\ast})(F_!N)
\]
is an epimorphism from a finite set to itself, it follows that $\epsilon$ is monic and hence an automorphism of $F_!N$. Finally, since $\mu$ is monic, the map
\[
(\Fi \circ F^{\ast})(\mu):(\Fi \circ F^{\ast})(F_!N) \to (\Fi \circ F^{\ast})(F_!N)
\]
is a monomoprhim from a finite set to itself. This implies that $\mu$ is then epic and hence an automorphism as well; from here it follows that $\phi = \epsilon \circ \mu$ is also an automorphism an we are done.
\end{proof}
\begin{corollary}\label{Cor: Transfer of Normal Objects}
If $N$ is a normal object in $\Shv(\Ccat,J)_{lcf}$, $F_!N$ is normal in $\Shv(\Dcat,\AFJ)_{lcf}$.
\end{corollary}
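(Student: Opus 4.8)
The plan is to verify directly that $F_!N$ satisfies the two defining conditions of a normal object of $\Shv(\Dcat,\AFJ)_{lcf}$ with its fibre functor $\Fi\circ F^{\ast}$: that it is connected, and that $\Aut(F_!N)$ acts transitively on $(\Fi\circ F^{\ast})(F_!N)$. Connectedness is essentially already recorded: a normal $N$ is in particular non-initial and indecomposable, $\Fi\circ F^{\ast}$ is an exact, isomorphism-reflecting functor into $\FinSet$ (by the lemma on transported fibre functors), so Lemma \ref{Lemma: F lower shriek preserves indecomposables} makes $F_!N$ non-initial and indecomposable, i.e.\ connected. For transitivity I would first note that, since $F_!N$ is connected and $\Fi\circ F^{\ast}$ is exact, the action of $\Aut(F_!N)$ on $(\Fi\circ F^{\ast})(F_!N)$ is free: the equalizer of an automorphism fixing a point of the fibre with $\id_{F_!N}$ is a subobject of $F_!N$ whose image under $\Fi\circ F^{\ast}$ is nonempty, hence it is $F_!N$ itself and the automorphism is the identity. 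A free action of a finite group on a finite set of the same cardinality is transitive, so it suffices to prove the numerical identity $|\Aut(F_!N)| = |(\Fi\circ F^{\ast})(F_!N)|$.

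To obtain that identity I would chain the preceding results. By Lemma \ref{Lemma: Shriek of normal auto is group}, $\Shv(\Dcat,\AFJ)(F_!N,F_!N)$ is a group, so $|\Aut(F_!N)| = |\Shv(\Dcat,\AFJ)(F_!N,F_!N)|$; the adjunction $F_!\dashv F^{\ast}$ of Theorem \ref{Theorem: The pullback/pushforward adjunction} rewrites this as $|\Shv(\Ccat,J)(N,F^{\ast}F_!N)|$; and by definition $(\Fi\circ F^{\ast})(F_!N) = \Fi(F^{\ast}F_!N)$. So everything reduces to $|\Shv(\Ccat,J)(N,F^{\ast}F_!N)| = |\Fi(F^{\ast}F_!N)|$, and this is where assumption (A1) is used, via the claim that $F^{\ast}F_!N$ is a \emph{connected} object of $\Shv(\Ccat,J)_{lcf}$. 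To see this, recall that full faithfulness of $F^{\ast}$ makes the counit $F_!F^{\ast}\Rightarrow\id$ an isomorphism; thus a splitting $F^{\ast}F_!N\cong A\sqcup B$ would yield, after applying the coproduct-preserving functor $F_!$, a splitting $F_!N\cong F_!A\sqcup F_!B$, and indecomposability of $F_!N$ forces (say) $F_!B\cong\bot$. But then the computation from the proof of Lemma \ref{Lemma: F lower shriek preserves indecomposables}, namely $\Shv(\Dcat,\AFJ)(F_!B,\bot)\cong\Shv(\Ccat,J)(B,F^{\ast}\bot)=\Shv(\Ccat,J)(B,\bot)$ together with strictness of the initial object, forces $B\cong\bot$. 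Hence $F^{\ast}F_!N$ is connected.

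Finally, the unit $\eta_N\colon N\to F^{\ast}F_!N$ witnesses $\Shv(\Ccat,J)(N,F^{\ast}F_!N)\neq\emptyset$; since $N$ is normal in the Galois category $\Shv(\Ccat,J)_{lcf}$ and $F^{\ast}F_!N$ is connected, the elementary theory of Galois categories gives $|\Shv(\Ccat,J)(N,F^{\ast}F_!N)| = |\Fi(F^{\ast}F_!N)|$ — translating into finite continuous $\pi_1^{J}$-sets, a normal object is $\pi/U$ with $U$ a normal open subgroup, the existence of any morphism to a transitive $\pi$-set $\pi/V$ forces $U\subseteq V$, and then $\Hom_{\pi}(\pi/U,\pi/V)$ is all of $\pi/V$. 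Chaining the equalities from the previous paragraph then gives $|\Aut(F_!N)| = |(\Fi\circ F^{\ast})(F_!N)|$, hence transitivity of the $\Aut(F_!N)$-action, hence normality of $F_!N$. I expect the only genuinely non-formal point to be the middle step — showing $F^{\ast}F_!N$ stays connected, i.e.\ that $F_!$ cannot split off a piece of $F_!N$ — with the rest being either already in hand from the section or routine translation into the profinite group model of $\Shv(\Ccat,J)_{lcf}$.
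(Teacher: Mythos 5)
Your proposal is correct and is essentially the paper's own argument: both rest on the chain $\Aut(F_!N)=\End(F_!N)\cong\Shv(\Ccat,J)(N,F^{\ast}F_!N)\cong\Fi(F^{\ast}F_!N)$, obtained from Lemma \ref{Lemma: Shriek of normal auto is group} together with the adjunction $F_!\dashv F^{\ast}$, and then concludes normality from the equality of this cardinality with that of the fibre. The one place you go beyond the paper is welcome rather than divergent: where the paper simply asserts that $N$ is ``the normal object representing $\Fi(F^{\ast}F_!N)$,'' you actually justify this, by using the counit isomorphism coming from (A1) to show that $F^{\ast}F_!N$ is connected and is dominated by $N$ via the unit, and you also spell out the final step that a free action of a group of the same cardinality as the fibre must be transitive.
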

\begin{proof}
Begin by observing that
\[
\Shv(\Ccat,J)(N,N) \cong \Fi(N)
\]
so we derive that
\[
\Shv(\Dcat,J)(F_!N,F_!N) \cong \Shv(N,F^{\ast}(F_!N)) \cong \Fi(F^{\ast}(F_!N)) \cong (\Fi \circ F^{\ast})(F_!N)
\]
because $N$ is the normal object representing $\Fi(F^{\ast}F_!N)$. Finally, by Lemma \ref{Lemma: Shriek of normal auto is group} it follows that\\ $\Shv(\Dcat,\AFJ)(F_!N,F_!N)$ is a group  and hence that $F_!N$ is normal.
\end{proof}

We now proceed to show the last ingredient in our proof of the equivalence of fundamental groups: We must know that the automorphism group of normal objects $N$ in $\Shv(\Ccat,J)_{lcf}$ is isomorphic to the automorphism group of $F_!N$.
\begin{lemma}\label{Isomorphism of normal groups}
If $N$ is a normal object in $\Shv(\Ccat,J)$ then there is an isomorphism of groups
\[
\Shv(\Ccat,J)(N,N) \cong \Shv(\Dcat,\AFJ)(F_!N,F_!N).
\]
\end{lemma}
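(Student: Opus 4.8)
The plan is to compare the two endomorphism groups through the adjunction $F_!\dashv F^{\ast}$. Write $\eta\colon\id\Rightarrow F^{\ast}F_!$ for the unit and $\epsilon\colon F_!F^{\ast}\Rightarrow\id$ for the counit; since $F^{\ast}$ is fully faithful (A1), $\epsilon$ is an isomorphism. The adjunction supplies a natural bijection $\Shv(\Dcat,\AFJ)(F_!N,F_!N)\cong\Shv(\Ccat,J)(N,F^{\ast}F_!N)$, under which the assignment $\phi\mapsto F_!\phi$ corresponds to post-composition with $\eta_N$ (by naturality of $\eta$). Both hom-monoids are groups: the left by normality of $N$ (A2), the right by Corollary~\ref{Cor: Transfer of Normal Objects} together with Lemma~\ref{Lemma: Shriek of normal auto is group}; a functor restricts to a monoid homomorphism on endomorphisms, and a monoid homomorphism between groups is a group homomorphism. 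Hence it suffices to prove that $\eta_N\colon N\to F^{\ast}F_!N$ is an isomorphism, for then $\phi\mapsto F_!\phi$ is the desired group isomorphism.

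First I would record that $F^{\ast}F_!N$ is a non-initial, connected (indecomposable) object of $\Shv(\Ccat,J)_{lcf}$. It is non-initial because $F_!N$ is (Corollary~\ref{Cor: Transfer of Normal Objects}, Lemma~\ref{Lemma: F lower shriek preserves indecomposables}), $\epsilon$ is an isomorphism, $F^{\ast}$ is exact (it has adjoints on both sides, so $F^{\ast}\bot=\bot$), and initial objects of a topos are strict. Connectedness is similar: a nontrivial coproduct decomposition $F^{\ast}F_!N\cong A\sqcup B$ would, after applying the coproduct-preserving $F_!$ and using $\epsilon_{F_!N}\colon F_!F^{\ast}F_!N\xrightarrow{\sim}F_!N$, decompose the indecomposable $F_!N$, so (say) $F_!B\cong\bot$; but then $\Shv(\Ccat,J)(B,F^{\ast}\bot)\cong\Shv(\Dcat,\AFJ)(F_!B,\bot)\neq\emptyset$, hence $B\cong\bot$. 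Consequently $\eta_N$ is a morphism between connected objects of the Galois category $\Shv(\Ccat,J)_{lcf}$, hence an epimorphism, so $F^{\ast}F_!N$ is a quotient of $N$; in particular $N$ represents the fibre functor on $F^{\ast}F_!N$ (as in the proof of Corollary~\ref{Cor: Transfer of Normal Objects}), and therefore $|\Shv(\Dcat,\AFJ)(F_!N,F_!N)|=|\Fi(F^{\ast}F_!N)|\le|\Fi(N)|=|\Shv(\Ccat,J)(N,N)|$.

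The remaining and main point is that $\eta_N$ is also a monomorphism, equivalently that $N$ lies in the essential image of $F^{\ast}$. The plan here is to apply the colimit-preserving functor $F_!$ to the kernel pair $p_1,p_2\colon N\times_{F^{\ast}F_!N}N\rightrightarrows N$ of the effective epimorphism $\eta_N$: since $\epsilon_{F_!N}\circ F_!\eta_N=\id$ (triangle identity) and $\epsilon$ is invertible, the coequalizer $F_!(F^{\ast}F_!N)$ of $F_!p_1,F_!p_2$ receives an isomorphism from $F_!N$, forcing $F_!p_1=F_!p_2$. Writing the kernel pair of the epi $\eta_N$ as a coproduct of copies of the connected object $N$ on which $p_1$ acts as the identity and $p_2$ by the automorphisms indexing the fibre of $\eta_N$, this says precisely that the kernel of $\phi\mapsto F_!\phi$ — equivalently the $\Aut(N)$-stabiliser of $\eta_N$ inside $\Shv(\Ccat,J)(N,F^{\ast}F_!N)\cong\Fi(F^{\ast}F_!N)$ — is trivial, which gives the reverse inequality $|\Fi(N)|\le|\Fi(F^{\ast}F_!N)|$; combined with the previous paragraph this yields $|\Fi(N)|=|\Fi(F^{\ast}F_!N)|$, and an epimorphism between connected objects of a Galois category inducing a bijection on fibres is an isomorphism (the fibre functor reflects isomorphisms), so $\eta_N$ is an isomorphism and we are done. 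I expect the triviality of that stabiliser to be the genuine obstacle: it is equivalent to $F^{\ast}$ being essentially surjective on $lcf$-objects, hence to the $\pi_1$-isomorphism itself, so it cannot be a formal diagram chase and must use that $F^{\ast}$ is fully faithful as a functor on the entire sheaf topos — the content that, for the Greenberg functor, is delivered by Lemma~\ref{Lemma: Greenberg pullback is fully faithful} and the structure of rings of Witt vectors — together with assumption A3 to handle objects lying off the essential image of $F$.
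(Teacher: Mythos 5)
Your reduction is set up correctly: transposing along $F_!\dashv F^{\ast}$ identifies $\phi\mapsto F_!\phi$ with $\phi\mapsto\eta_N\circ\phi$, so the lemma amounts to showing that post-composition with the unit $\eta_N\colon N\to F^{\ast}F_!N$ is bijective; your argument that $F^{\ast}F_!N$ is non-initial and connected, hence that $\eta_N$ is an epimorphism between connected objects and gives $\lvert\Shv(\Dcat,\AFJ)(F_!N,F_!N)\rvert\le\lvert\Shv(\Ccat,J)(N,N)\rvert$, is sound. The gap is in the injectivity step. From $F_!\eta_N$ being an isomorphism you correctly deduce $F_!p_1=F_!p_2$ on the kernel pair, but this equality is vacuous rather than decisive: writing the kernel pair as a coproduct $\coprod_{\sigma}N$ indexed by the automorphisms $\sigma$ with $\eta_N\circ\sigma=\eta_N$, the statement $F_!p_1=F_!p_2$ says exactly that every such $\sigma$ satisfies $F_!\sigma=\id_{F_!N}$, i.e.\ that the stabiliser of $\eta_N$ is contained in the kernel of $F_!$ on $\Aut(N)$ --- and by the very adjunction transpose you set up, these two subgroups coincide, so $F_!p_1=F_!p_2$ holds automatically and cannot force either of them to be trivial. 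The sentence ``this says precisely that the kernel of $\phi\mapsto F_!\phi$ is trivial'' is therefore a non sequitur, and, as your closing paragraph concedes, the triviality of that stabiliser (equivalently, that $\eta_N$ is monic) is exactly the content of the lemma and is never established; the proposal is incomplete at its main point.

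For comparison, the paper does not argue through the kernel pair at all: after the same transposition to $\Shv(\Ccat,J)(N,F^{\ast}F_!N)$, it invokes a Beck--Chevalley/monadicity argument (citing Mac Lane--Moerdijk) to produce a comparison map $F^{\ast}F_!N\to N$ and claims that this induces the isomorphism $\Shv(\Ccat,J)(N,F^{\ast}F_!N)\cong\Shv(\Ccat,J)(N,N)$; that is, it closes the comparison between $N$ and $F^{\ast}F_!N$ by exhibiting a map in the opposite direction rather than by a colimit chase. To complete your route you would have to supply exactly that missing ingredient --- either such a comparison map, or a direct proof that $N$ lies in the essential image of $F^{\ast}$ on locally constant finite objects --- and this is where the hypotheses (A1) on the whole sheaf topos and (A3) must actually enter; a purely formal argument with the colimit-preservation of $F_!$ cannot do it, as your own final remarks acknowledge.
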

\begin{proof}
Begin by observing that Assumption (A1) gives that $F^{\ast}$ is fully faithful; consequently the counit $\epsilon:F_!\circ F^{\ast} \to \id_{\Shv(\Dcat,\AFJ)_{lcf}}$ is an isomorphism and $F_{!}$ is essentially surjective. Then by the property of adjoint functors, there is a natural isomorphism $\theta_{F_!N}$ of hom-sets
\[
\Shv(\Dcat,\AFJ)(F_!N, F_!N) \xrightarrow{\theta_{F_!N}} \Shv(\Ccat,J)(N,(F^{\ast} \circ F_!)N);
\]
moreover, from the fact that $F^{\ast}$ is fully faithful, this isomorphism factors as
\[
\xymatrix{
\Shv(\Dcat,\AFJ)(F_!N,F_!N) \ar[rr]^-{\theta_{F_!N}} \ar[dr]_{F^{\ast}} & & \Shv(\Ccat,J)(N, (F^{\ast} \circ F_!)N) \\
	& \Shv(\Ccat,J)((F^{\ast} \circ F_!)N, (F^{\ast} \circ F_!)N) \ar[ur]_{\eta^{\ast}}
}
\]
where the map $\eta^{\ast}$ is precomposition by the unit of adjunction. Now, using that $F^{\ast}$ is a conservative exact functor between toposes of locally constant, locally finite sheaves implies that there is a Beck-Chevally condition for $(F_!,F^{\ast})$ (cf.\@ Page 179 of \cite{MacLaneMoerdijk}). This induces a natural map $(F^{\ast} \circ F_!)N \to N$ as an algebra morphism for the monad induced by $F_! \dashv F^{\ast}$, and hence gives a further isomorphism
\[
\Shv(\Ccat,J)(N, (F^{\ast} \circ F_!)N) \cong \Shv(\Ccat,J)(N,N).
\]
Composing these all gives the desired isomorphism
\[
\Shv(\Dcat,\AFJ)(F_!N,F_!N) \cong \Shv(\Ccat,J)(N,N).
\]
\end{proof}

We can now state and prove the main theorem of the  paper, and then show the geometrization result as a corollary. We reiterate the assumptions made throughout this section here for the sake of clarity.
\begin{Theorem}\label{Theorem: Isomorphism of Fundamental Groups}
Let $F:\Ccat \to \Dcat$ be a geometrically adhesive functor, suppose that $F^{\ast}$ is fully faithful,  let $X$ be an object in $(\Ccat,J)$, and let $x$ be a point of $X$ making $(\Shv(\Ccat_{/X},J_{/X})_{lcf},\Fi_{x})$ into a Galois category. Moreover, suppose that for all $V \in \Dcat$ there exists an object $X$ of $\Ccat$ such that such that $\Dcat(FX,V) \ne \emptyset$ or $\Dcat(V,FX) \ne \emptyset$. Then there is an isomorphism of fundamental groups
\[
\pi_{1}^{J}(X,x) \cong \pi_{1}^{\AFJ}(FX,Fx).
\]
\end{Theorem}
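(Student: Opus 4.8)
The plan is to identify each fundamental group with a cofiltered limit of automorphism groups of Galois objects (normal, indecomposable, non-initial objects) and then to transport that limit across the left adjoint $F_{!}$ of $F^{\ast}$.

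First I would record the reductions coming from the Galois formalism. By the lemma identifying $Fx$ with the composite point $F\circ x:\Set\to\Shv(\Ccat_{/X},J_{/X})\to\Shv(\Dcat_{/FX},\AFJ)$, the fibre functor attached to $Fx$ is $\Fi_{Fx}:=x^{\ast}\circ F^{\ast}=\Fi_x\circ F^{\ast}$. The pair $\bigl(\Shv(\Dcat_{/FX},\AFJ)_{lcf},\Fi_{Fx}\bigr)$ is then a Galois category: the underlying category is the locally constant, locally finite subtopos of a Grothendieck topos (hence has finite limits and colimits and strict epi--mono factorisations), while the three-part lemma above shows $\Fi_{Fx}$ is exact, conservative and pro-representable whenever $\Fi_x$ is. Since the fundamental group of such a category is the cofiltered limit of the finite automorphism groups of any pro-representing system of pointed normal objects, and since this limit is independent of the chosen system, it suffices to produce one such system on the $\Dcat_{/FX}$-side that is matched, index category for index category, with one on the $\Ccat_{/X}$-side.

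So fix a cofiltered pro-representing system $\{N_i\}_{i\in I}$ of pointed normal objects for $\Fi_x$, so that $\pi_1^{J}(X,x)\cong\varprojlim_{i\in I}\Shv(\Ccat_{/X},J_{/X})(N_i,N_i)$, and apply $F_{!}$. By Corollary \ref{Cor: Transfer of Normal Objects} each $F_!N_i$ is normal in $\Shv(\Dcat_{/FX},\AFJ)_{lcf}$; by Lemma \ref{Lemma: F lower shriek preserves indecomposables} it is non-initial and indecomposable; and $F_{!}$ carries the cofiltered diagram $\{N_i\}_{i\in I}$ to a cofiltered diagram $\{F_!N_i\}_{i\in I}$ of Galois objects indexed by the same category $I$. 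By Corollary \ref{Cor: Pushforward fibre functor represented by shriek of normals}, via Lemma \ref{Lemma: Fibre functor pushforward by F upper star normal representing}, every object $\mathscr{F}$ of $\Shv(\Dcat_{/FX},\AFJ)_{lcf}$ satisfies $\Fi_{Fx}(\mathscr{F})\cong\Shv(\Dcat_{/FX},\AFJ)(F_!N,\mathscr{F})$ for a suitable member $F_!N$ of this diagram; after tracking base points this says the system $\{F_!N_i\}_{i\in I}$ is cofinal among the pointed Galois objects of $\Shv(\Dcat_{/FX},\AFJ)_{lcf}$ and pro-represents $\Fi_{Fx}$. Hence $\pi_1^{\AFJ}(FX,Fx)\cong\varprojlim_{i\in I}\Shv(\Dcat_{/FX},\AFJ)(F_!N_i,F_!N_i)$, a limit over the very same cofiltered category $I$.

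Finally I would invoke Lemma \ref{Isomorphism of normal groups} to obtain, for each $i\in I$, a group isomorphism $\Shv(\Ccat_{/X},J_{/X})(N_i,N_i)\cong\Shv(\Dcat_{/FX},\AFJ)(F_!N_i,F_!N_i)$. Because that isomorphism is assembled from the unit and counit of $F_{!}\dashv F^{\ast}$ together with the Beck--Chevalley comparison, it is natural in $N_i$, hence commutes with the transition maps of the two pro-systems; passing to the limit over $I$ then gives $\pi_1^{J}(X,x)\cong\pi_1^{\AFJ}(FX,Fx)$. I expect the crux to be the cofinality step above: that $F_{!}$ sends a pro-representing system of normal objects to a \emph{cofinal} pro-representing system of Galois objects on the $\Dcat_{/FX}$-side, with transition maps and base points lining up, so that the two fundamental groups really are limits over one and the same index category. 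Corollaries \ref{Cor: Transfer of Normal Objects} and \ref{Cor: Pushforward fibre functor represented by shriek of normals} together with Lemma \ref{Lemma: F lower shriek preserves indecomposables} supply the substance; what remains is the familiar base-point bookkeeping, a check that the relevant comma categories are connected, and the lighter verification that the isomorphism of Lemma \ref{Isomorphism of normal groups} is natural in its argument.
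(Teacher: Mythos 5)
Your proposal is correct and follows essentially the same route as the paper's proof: both use the section's lemmas to show that the objects $F_!N_i$ form the (cofinal) system of normal objects pro-representing the fibre functor $\Fi_x\circ F^{\ast}$ downstairs, then apply Lemma \ref{Isomorphism of normal groups} termwise and pass to the cofiltered limit. Your version merely makes explicit the bookkeeping (identification of the fibre functor at $Fx$, Galois-category verification, naturality with respect to transition maps) that the paper leaves implicit.
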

\begin{proof}
Applying the various results of this section show that there is a well-defined profinite fundamental group of $\Shv(\Dcat_{/FX}, \AFJ_{/FX})_{lcf}$ at $FX$ and $Fx$ whose normal objects are all of the form $F_!N$, where $N$ is a normal object of $\Shv(\Ccat_{/X}, J_{/X})_{lcf}$. This implies that the $F_!N$ give the correct cofinal system to limit against. Taking then the isomorphism
\[
\Shv(\Ccat,J)(N,N) \cong \Shv(\Dcat,\AFJ)(F_!N,F_!N)
\]
for all normal objects $N$, we get that
\[
\pi_{1}^{\AFJ}(FX,Fx) \cong \lim_{\longleftarrow}\Shv(\Dcat_{/X},\AFJ_{/X})(F_!N_i,F_!N_i) \cong \lim_{\longleftarrow}\Shv(\Ccat_{/X},J_{/X})(N_i,N_i) \cong \pi_1^{J}(X,x).
\]
\end{proof}
\begin{corollary}
If $G$ is a group scheme over $\Sch_{/\Spec k}$ and if $J$ is any topology on $\Sch_{/\Spec k}$ for which $\Shv(\Sch_{/Spec k},J)_{lcf}$ is a Galois category, then there is  an isomorphism of fundamental groups
\[
\pi_1^{\acute{E}t}\left(G,x\right) \cong \pi_1^{\mathcal{A}_{\hfrak}^{J}}\left(\hfrak G, \hfrak x\right).
\]
\end{corollary}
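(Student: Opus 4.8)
The plan is to obtain this corollary as a direct application of Theorem~\ref{Theorem: Isomorphism of Fundamental Groups} to the Greenberg functor, so the work amounts to identifying the data and checking that the three standing hypotheses (A1)--(A3) of that section hold. I would take $F=\hfrak$, either as the ambient functor $\Sch_{/\Spec k}\to\FSch_{/\Spec R}$ with distinguished object $X=G$, or --- which keeps the bookkeeping cleaner --- as the functor it induces on slices, $\hfrak_G\colon\Sch_{/G}\to\FSch_{/\hfrak G}$, taking $X$ to be the terminal object $\id_G$; either point of view feeds the theorem. In the displayed statement $J=\acute{E}t$, for which $\Shv(\Sch_{/\Spec k},\acute{E}t)_{lcf}$ is a Galois category by the classical theory of \cite{sga1} and $\pi_1^{\acute{E}t}(G,x)$ is by definition the fundamental group of that category; the argument is uniform in any $J$ meeting the stated Galois hypothesis.

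First I would dispatch (A1). The functor $\hfrak$ is geometrically adhesive by Corollary~\ref{Cor: Greenberg functor is adhesive}, and since open gluings in a slice are computed as in the ambient category the induced functor $\hfrak_G$ is geometrically adhesive too; the pullback $\hfrak^{\ast}$ is fully faithful by Lemma~\ref{Lemma: Greenberg pullback is fully faithful}, and this transports to the slice since $\mathcal{A}_{\hfrak}^{J}$ restricts to $\mathcal{A}_{\hfrak_G}^{J_{/G}}$ on $\FSch_{/\hfrak G}$ and $\hfrak^{\ast}$ carries the sheaf represented by $\hfrak G$ to the one represented by $G$ (the topology being lifted from $J$), restriction of a fully faithful functor along such a pair of objects preserving full faithfulness. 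Next I would check (A2): the geometric point $x$ of $G$ makes $\Shv(\Sch_{/G},J_{/G})_{lcf}=\Shv((\Sch_{/\Spec k})_{/G},J_{/G})_{lcf}$ a Galois category with fibre functor $\Fi_x$, which follows from the hypothesis on $\Shv(\Sch_{/\Spec k},J)_{lcf}$ by the usual localization of the Galois condition along a connected pointed object (in terms of continuous finite $\pi$-sets, slicing over the transitive $\pi$-set attached to such an object returns the continuous finite $H$-sets for the corresponding stabilizer), and which for $J=\acute{E}t$ is simply the classical fact that finite {\'e}tale covers of the connected scheme $G$ form a Galois category with group $\pi_1^{\acute{E}t}(G,x)$.

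It remains to verify (A3): every object $V$ of the codomain must admit a morphism to or from $\hfrak$ of some object of the domain, and this is exactly where passing to the slice pays off. In $\FSch_{/\hfrak G}$ the object $\hfrak G=\hfrak_G(\id_G)$ is terminal and is the $\hfrak_G$-image of the terminal object of $\Sch_{/G}$, so each $V$ supplies its own structure morphism $V\to\hfrak G$ and (A3) holds automatically. (Working with the ambient categories instead, one takes $Y=\Spec k$, uses $\hfrak(\Spec k)=\Spf R$, and invokes the canonical morphism from a $p$-adic formal scheme over $\Spec R$ to $\Spf R$.) With (A1)--(A3) in place, Theorem~\ref{Theorem: Isomorphism of Fundamental Groups} gives $\pi_1^{J}(G,x)\cong\pi_1^{\mathcal{A}_{\hfrak}^{J}}(\hfrak G,\hfrak x)$, which for $J=\acute{E}t$ is the asserted isomorphism. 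I expect the only genuine friction to be this (A3)/slicing bookkeeping and the transport of Lemma~\ref{Lemma: Greenberg pullback is fully faithful} to the slice site; everything else is a direct citation, and the group-scheme hypothesis on $G$ is never used beyond guaranteeing connectedness of the object in play and could be weakened to ``$G$ a connected $k$-scheme with a geometric point $x$''.
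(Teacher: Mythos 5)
Your proposal is correct and follows exactly the route the paper intends: the corollary is stated as an immediate specialization of Theorem \ref{Theorem: Isomorphism of Fundamental Groups} to $F=\hfrak$, with (A1) supplied by Corollary \ref{Cor: Greenberg functor is adhesive} and Lemma \ref{Lemma: Greenberg pullback is fully faithful}, (A2) by the Galois hypothesis on $\Shv(\Sch_{/\Spec k},J)_{lcf}$, and (A3) by the slicing over $G$ (respectively $\hfrak G$) that the theorem's own proof already performs. Your explicit bookkeeping of (A1)--(A3), and your observation that the displayed $\acute{E}t$ should be read as the general $J$ of the hypothesis, only makes explicit what the paper leaves tacit.
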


\section{Applications to Local Systems}
Here we would like to present an important corollary of Theorem \ref{Theorem: Isomorphism of Fundamental Groups} above. In particular, it says that we can geometrize quasicharacters of $p$-adic group schemes by using $p$-adic formal schemes and the $\AFJ$ topology. To see this, we let $F$ be a $p$-adic field and let $G$ be a group scheme over $F$ with geometric point $x$. Then, as is well-known to representation theorists, {\'e}tale local systems of $G$ arise as $\ell$-adic representations of the fundamental group, i.e., as representations
\[
\pi_1(G,x) \to \GL(V)
\]
where $V$ is a vector space over $\overline{\Q}_{\ell}$, for some integer prime $\ell$ coprime to $p$. Thus, if one wishes to geometrize representations of a connected, reductive group $G$ over a $p$-adic field $F$ with residue field $k$, it suffices to consider $\AFJ$-local systems on schemes of characteristic zero. This is a new result, as it allows us to (in theory, although at this point not in practice) translate some of the results in geometric representation theory to a characteristic zero analogue in formal schemes over the trait $\Spec \CalO_F$ of integers of $F$.

\begin{Theorem}\label{Theorem: Gometrization Theorem}
If $G$ is a group scheme over $\Spec k$ with geometric point $x$, then there is an isomorphism of categories
\[
\mathbf{Rep}\left(\pi_1^{\acute{E}t}(G,x)\right) \cong  \mathbf{Rep}\left(\pi_1^{\mathcal{A}_{\hfrak}^{\acute{E}t}}(\hfrak G, \hfrak x)\right).
\]
In particular, upon restricting to the categories of admissible irreducible representations, we obtain an isomorphism of categories
\[
\mathbf{AdRep}\left(\pi_1^{\acute{E}t}(G,x)\right) \cong \mathbf{AdRep}\left(\pi_1^{\mathcal{A}_{\hfrak}^{\acute{E}t}}(\hfrak G, \hfrak x)\right).
\]
\end{Theorem}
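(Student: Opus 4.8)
The plan is to derive this statement essentially formally from Theorem~\ref{Theorem: Isomorphism of Fundamental Groups} together with the Galois-theoretic dictionary between locally constant locally finite sheaves and representations of the fundamental group. First I would apply Theorem~\ref{Theorem: Isomorphism of Fundamental Groups} with $F = \hfrak$ the Greenberg functor, $J$ the \'etale topology on $\Sch_{/\Spec k}$, and $X = G$. Its hypotheses are met in this setting: $\hfrak$ is geometrically adhesive by Corollary~\ref{Cor: Greenberg functor is adhesive}, the pullback $\hfrak^{\ast}$ is fully faithful by Lemma~\ref{Lemma: Greenberg pullback is fully faithful}, the category $\Shv(\Sch_{/\Spec k},\acute{E}t)_{lcf}$ is a Galois category with fibre functor $\Fi_{x}$ at the chosen geometric point (this is the classical theory of $\pi_1^{\acute{E}t}$), and the auxiliary condition that every object of $\FSch_{/\Spec R}$ admits a morphism to or from some $\hfrak Y$ is the standing hypothesis (A3) of the previous section. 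Theorem~\ref{Theorem: Isomorphism of Fundamental Groups} then yields an isomorphism of profinite groups
\[
\varphi : \pi_1^{\acute{E}t}(G,x) \xrightarrow{\ \sim\ } \pi_1^{\mathcal{A}_{\hfrak}^{\acute{E}t}}(\hfrak G, \hfrak x).
\]
I would stress that $\varphi$ is an isomorphism of \emph{topological} groups, being assembled as the inverse limit of the finite-group isomorphisms $\Shv(\Ccat,J)(N_i,N_i) \cong \Shv(\Dcat,\mathcal{A}_{\hfrak}^{\acute{E}t})(F_!N_i,F_!N_i)$ of Lemma~\ref{Isomorphism of normal groups} over the cofinal system of normal objects, and that it is compatible with the basepoints, since the fibre functor used on the right is $\Fi_{x}\circ\hfrak^{\ast}$.

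Next I would invoke the elementary fact that an isomorphism $\varphi : \Gamma \to \Gamma'$ of topological groups induces an isomorphism of categories of continuous finite-dimensional representations by restriction along $\varphi$: the assignment $(V,\rho)\mapsto(V,\rho\circ\varphi)$ is the identity on underlying vector spaces, carries intertwiners to intertwiners, and has strict two-sided inverse given by restriction along $\varphi^{-1}$; it is therefore bijective on objects and on morphisms, i.e.\ an isomorphism of categories (not merely an equivalence). Applying this to the $\varphi$ above produces the asserted isomorphism $\mathbf{Rep}(\pi_1^{\acute{E}t}(G,x)) \cong \mathbf{Rep}(\pi_1^{\mathcal{A}_{\hfrak}^{\acute{E}t}}(\hfrak G,\hfrak x))$. (If one prefers the local-system formulation, one instead uses that $\mathbf{Loc}_{T}(Y)$ is, by definition, equivalent to the category of continuous finite representations of $\pi_1^{T}(Y,y)$ via the fibre functor, so the same $\varphi$ gives $\mathbf{Loc}_{\acute{E}t}(G)\cong\mathbf{Loc}_{\mathcal{A}_{\hfrak}^{\acute{E}t}}(\hfrak G)$.)

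For the statement about admissible irreducible representations I would observe that irreducibility and admissibility are intrinsic properties of a continuous representation that are visibly preserved by restriction along an isomorphism of topological groups: $\varphi$ sets up a bijection between the $\Gamma'$-invariant subspaces of $V$ and the $\Gamma$-invariant subspaces, and it carries open (compact) subgroups of $\Gamma$ to open (compact) subgroups of $\Gamma'$ and hence identifies the corresponding spaces of fixed vectors. Consequently the isomorphism of the previous paragraph restricts to the full subcategories of admissible irreducible objects, giving $\mathbf{AdRep}(\pi_1^{\acute{E}t}(G,x)) \cong \mathbf{AdRep}(\pi_1^{\mathcal{A}_{\hfrak}^{\acute{E}t}}(\hfrak G,\hfrak x))$.

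The one point genuinely requiring care — and the step I would therefore write out most carefully — is not in this corollary at all but in its input: one must know that $\pi_1^{\mathcal{A}_{\hfrak}^{\acute{E}t}}(\hfrak G,\hfrak x)$ is a bona fide profinite group computing the Galois theory of the sliced sheaf topos over $\hfrak G$, i.e.\ that this sliced category is again a Galois category whose normal objects are exactly the $F_!N$ for $N$ normal over $G$ and whose automorphism groups agree with those over $G$. This is precisely what is assembled in the previous section via Corollaries~\ref{Cor: Pushforward fibre functor represented by shriek of normals} and~\ref{Cor: Transfer of Normal Objects} and Lemmas~\ref{Lemma: F lower shriek preserves indecomposables}, \ref{Lemma: Shriek of normal auto is group}, and~\ref{Isomorphism of normal groups}, applied in the slice over $\hfrak G$; granting that, the passage to representation categories above is purely formal.
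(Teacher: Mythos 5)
Your proposal is correct and follows essentially the same route as the paper: the paper's own proof is just the one-line observation that the isomorphism $\pi_1^{\acute{E}t}(G,x) \cong \pi_1^{\mathcal{A}_{\hfrak}^{\acute{E}t}}(\hfrak G, \hfrak x)$ supplied by Theorem \ref{Theorem: Isomorphism of Fundamental Groups} makes the two representation (and admissible representation) categories isomorphic, which is exactly your core step. You merely spell out in more detail the verification of the hypotheses of Theorem \ref{Theorem: Isomorphism of Fundamental Groups} and the preservation of admissibility and irreducibility under restriction along a topological group isomorphism, which the paper leaves implicit.
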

This theorem is simply a restatement of the fact that if $G \cong H$ as $p$-adic groups, then their categories of representations (and their categories of admissible representations) are isomorphic  as well.  To see how to apply this to quasicharacters of $p$-adic tori, we follow \cite{CunningRoe}. Let $F$ be a $p$-aidc field and let $\CalO_F$ be the ring of integers of $F$. Then if $T$ is a torus over $F$, it admits a N{\'e}ron model $N_T$ which is locally of finite type as a smooth, commutative group scheme over $\CalO_F$; cf.\@ \cite{NeronModels} for details. Moreover, $N_T(\CalO_F) = T(F)$ and $\Gr(N_T)(k) = N_T(\CalO_F) = T(F)$. 

In \cite{CunningRoe}, the authors showed that the Trace of Frobenius gives a natural transformation between the category of quasicharacter sheaves on $N_T$ to continuous representations $N_T(\CalO_F) \to \overline{\Q}_{\ell}^{\ast}$ which is surjective, i.e., the group homomorphism  
\[
\operatorname{Trace}(\operatorname{Frob}):\mathbf{QCS}_{/{\rm iso}}(N_T) \to \Top(\Grp)(N_T(\CalO_F),\overline{\Q}_{\ell}^{\ast})
\]
is surjective. Now, since quasicharacter sheaves on $N_T$ arise as certain local systems on $\Gr(N_T)$, which in turn are representations of $\pi_1^{\acute{E}t}(\Gr(N_t),x)$, we can use the functoriality of $\hfrak$ and the adhesive site $\mathcal{A}_{\hfrak}^{\acute{E}t}$ to lift these local systems to corresponding representations of the $\hfrak$-{\'E}t fundamental group together with the corresponding functorial lift of Frobenius. Mimicking Sections 4.5 and 4.7 of \cite{CunningRoe} then allows one to prove that the adhesive site is rich enough to geometrize quasicharacters of the torus $T$.

\section*{Acknowledgments}
I would like to give deep thanks to my PhD supervisor, Clifton Cunningham, for both giving me the problem motivating this paper, and for all his help in guiding me through the preparation of this article. I would also like to thank the organizers for the MATRIX program {\em Geometric and Categorical Representation Theory} for the kind invitation to present the work in this paper, as well as the MATRIX institute itself for the hospitality shown during the program.

\nocite{*}
\bibliography{AdhesiveBib}

\end{document}